\numberwithin{equation}{section}
\newcommand{\ds}{\displaystyle}
\newcommand{\bsi}{{\boldsymbol\sigma}}
\newcommand{\btau}{{\boldsymbol\tau}}
\newcommand{\bchi}{{\boldsymbol\chi}}
\newcommand\cI{{\mathcal I}}
\newcommand\bcI{{\boldsymbol\cI}}
\newcommand{\bv}{{\mathbf{v}}}
\newcommand{\bw}{{\mathbf{w}}}
\newcommand{\f}{\mathbf{f}}
\newcommand{\g}{\mathbf{g}}
\newcommand{\ba}{\mathbf{a}}
\newcommand{\bb}{\mathbf{b}}
\newcommand{\bi}{\mathbf{i}}
\newcommand{\bu}{\mathbf{u}}
\newcommand{\bz}{{\mathbf{z}}}
\newcommand{\bt}{{\mathbf{t}}}
\newcommand{\bn}{{\mathbf{n}}}
\newcommand{\0}{{\mathbf{0}}}
\def\bC{\mathbf{C}}
\def\bK{\mathbf{K}}
\def\bX{\mathbf{X}}
\def\bV{\mathbf{V}}
\def\bQ{\mathbf{Q}}
\def\bRT{\mathbf{RT}}
\def\bBR{\mathbf{BR}}
\def\bx{\mathbf{x}}
\def\by{\mathbf{y}}
\def\bz{\mathbf{z}}
\newcommand{\bL}{\mathbf{L}}
\newcommand\bH{\mathbf{H}}
\newcommand\bbN{\mathbb{N}}
\newcommand\bbI{\mathbb{I}}
\newcommand\bbH{\mathbb{H}}
\newcommand\bbL{\mathbb{L}}
\newcommand{\bbC}{\mathbb{C}}
\newcommand{\cC}{\mathcal{C}}
\newcommand{\cE}{\mathcal{E}}
\newcommand{\cT}{\mathcal{T}}
\newcommand{\cO}{\mathcal{O}}
\newcommand{\cP}{\mathcal{P}}
\newcommand{\cR}{\mathcal{R}}
\def\R{\mathrm{R}}
\def\H{\mathrm{H}}
\def\L{\mathrm{L}}
\def\X{\mathrm{X}}
\def\W{\mathrm{W}}
\def\re{\mathsf{e}}
\def\sr{\mathsf{r}}
\def\re{\mathsf{e}}
\def\sr{\mathsf{r}}
\def\rB{\mathrm{B}}
\def\rD{\mathrm{D}}
\def\rP{\mathrm{P}}
\def\rt{\mathrm{t}}
\def\tF{\mathtt{F}}
\def\tBFD{\mathtt{BFD}}
\def\bdiv{\mathbf{div}}
\def\bcurl{\mathbf{curl}}
\def\tr{\mathrm{tr}}
\def\rot{\mathrm{rot}}
\def\div{\mathrm{div}}
\def\pil{\left<}
\def\pir{\right>}
\def\coeff{\mathbf{coeff}}
\def\tol{\textsf{tol}}
\def\DOF{\mathtt{DoF}}
\def\eff{\textsf{eff}}
\def\rS{\mathrm{S}}
\def\qin{{\quad\hbox{in}\quad}}
\def\qon{{\quad\hbox{on}\quad}}
\def\qan{{\quad\hbox{and}\quad}}
\def\ov{\overline}
\def\wh{\widehat}
\newcommand{\Chel}{C_{\rm hel}}
\def\jump#1{\text{ $\hspace{-0.1cm}\left[\!\left[#1\right]\!\right]$}}
\newtheorem{thm}{Theorem}[section]
\newtheorem{lem}[thm]{Lemma}
\newenvironment{proof}{\noindent{\it Proof.}}{\hfill$\square$}
\numberwithin{equation}{section}
\numberwithin{figure}{section}
\numberwithin{table}{section}
\title{{\it A posteriori} error analysis of a mixed FEM for the coupled Brinkman--Forchheimer/Darcy problem\thanks{This research was supported by ANID-Chile through the projects {\sc Centro de Modelamiento Matem\'atico} (FB210005) and Fondecyt 11220393; by DI-UCSC through the project FGII 06/2024; and by Grupo de Investigaci\'on en An\'alisis Num\'erico y C\'alculo Cient\'ifico (GIANuC$^2$), Universidad Cat\'olica de la Sant\'isima Concepci\'on.}}
\author{{\sc Sergio Caucao}\thanks{GIANuC$^2$ and Departamento de Matem\'atica y F\'isica Aplicadas, 
Universidad Cat\'olica de la Sant\'isima Concepci\'on, Casilla 297, Concepci\'on, Chile, 
email: {\tt scaucao@ucsc.cl}.}
\quad
{\sc Paulo Z{\'u}{\~n}iga}\thanks{Departamento de Ciencias Matem\'aticas y F\'isicas, Universidad Cat\'olica de Temuco, Temuco, Chile, email: {\tt paulo.zuniga@uct.cl}.}}
\date{ }
\begin{document}

\maketitle

\begin{abstract}
\noindent We consider a mixed variational formulation recently proposed for the coupling of the Brinkman--Forchheimer and Darcy equations and develop the first reliable and efficient residual-based {\it a posteriori} error estimator for the 2D version of the associated conforming mixed finite element scheme. For the reliability analysis, due to the nonlinear nature of the problem, we make use of the inf-sup condition and the strong monotonicity of the operators involved, along with a stable Helmholtz decomposition in Hilbert spaces and local approximation properties of the Raviart--Thomas and Cl\'ement interpolants. On the other hand, inverse inequalities, the localization technique through bubble functions, and known results from previous works are the main tools yielding the efficiency estimate. Finally, several numerical examples confirming the theoretical properties of the estimator and illustrating the performance of the associated adaptive algorithms are reported. In particular, the case of flow through a heterogeneous porous medium is considered.
\end{abstract}

\noindent
{\bf Key words}: Brinkman--Forchheimer problem, Darcy problem, velocity-pressure formulation, mixed finite element methods, {\it a posteriori} error analysis

\smallskip\noindent
{\bf Mathematics subject classifications (2010)}: 65N30, 65N12, 65N15, 35Q79, 80A20, 76R05, 76D07

\maketitle


\section{Introduction}

We recently introduced in \cite{cd2023} a mixed finite element method for modeling the filtration of an incompressible fluid through a non-deformable saturated porous medium with heterogeneous permeability. The flows are governed by the Brinkman--Forchheimer and Darcy equations in the more and less permeable regions, respectively, with the corresponding transmission conditions defined by mass conservation and continuity of momentum. The standard mixed formulation is used in the Brinkman--Forchheimer domain and the dual-mixed formulation in the Darcy region, with the continuity of normal velocities enforced through the introduction of a suitable Lagrange multiplier. The finite element discretization employs Bernardi--Raugel and Raviart--Thomas elements for the velocities, piecewise constants for the pressures, and continuous piecewise linear elements for the Lagrange multiplier. Stability, convergence, and optimal {\it a priori} error estimates were also derived in \cite{cd2023}.

It is widely recognized that adaptive algorithms using {\it a posteriori} error estimates are highly effective in recovering the loss of convergence orders in most standard Galerkin procedures, such as finite element and mixed finite element methods. This is especially true when these methods are applied to nonlinear problems with singularities or high gradients in the exact solutions.
In particular, this powerful tool has been applied to quasi-Newtonian fluid flows obeying the power law, which includes Forchheimer and related single and coupled models.
In this direction, we refer to \cite{ep2006}, \cite{fz2008}, \cite{s2021}, \cite{cgos2021}, \cite{ce2024}, and \cite{cgg2024-pp}, for various contributions addressing this issue.
Particularly, in \cite{ep2006} an {\it a posteriori} error estimator defined via a non-linear projection of the residuals of the variational equations for a three-field model of a generalized Stokes problem was proposed and analyzed.
In turn, a new {\it a posteriori} error estimator for a mixed finite element approximation of non-Newtonian fluid flow problems was developed in \cite{fz2008}. This mixed formulation, like finite volume methods, possesses local conservation properties, namely conservation of momentum and mass.
Later on, {\it a posteriori} error analyses for the aforementioned Brinkman--Darcy--Forchheimer model in velocity-pressure formulation were developed in \cite{s2021}. Specifically, two types of error indicators related to the discretization and linearization of the problem were established.
Furthermore, the first contribution to deriving an {\it a posteriori} error analysis of the primal-mixed finite element method for the Navier--Stokes/Darcy--Forchheimer coupled problem was proposed and analyzed in \cite{cgos2021}. Specifically, this work extended the usual techniques from the Hilbertian framework to Banach spaces, resulting in a reliable and efficient {\it a posteriori} error estimator for the mixed finite element method. The analysis includes corresponding local estimates and introduces new Helmholtz decompositions for reliability, as well as inverse inequalities and local estimates of bubble functions for efficiency.
Meanwhile, \cite{ce2024} presents the first {\it a posteriori} error analysis for an augmented mixed finite element method applied to the stationary convective Brinkman--Forchheimer equations within a Hilbert framework. The latter was also studied in \cite{cgg2024-pp} for non-augmented mixed finite element methods based on Banach spaces in a Banach framework.
Finally, we refer to \cite{cgoz2022} and \cite{cgo2023} for recent {\it a posteriori} error analyses of partially augmented and Banach spaces-based mixed formulations for the coupled Brinkman--Forchheimer and double-diffusion equations.

According to the above discussion, and to complement the study started in \cite{cd2023} for the coupled Brinkman--Forchheimer/Darcy problem, in the present paper we employ and adapt the {\it a posteriori} error analysis techniques developed in \cite{bg2010}, \cite{agr2016}, and \cite{ce2024} for mixed formulations to the current coupled problem. We develop a reliable and efficient residual-based {\it a posteriori} error estimator in 2D for the mixed finite element method from \cite{cd2023}. More precisely, we derive a global quantity $\Theta$ that is expressed in terms of calculable local indicators $\Theta_T$ defined on each element $T$ of a given triangulation $\mathcal{T}$. This information can then be used to localize sources of error and construct an algorithm to efficiently adapt the mesh. In this way, the estimator $\Theta$ is said to be efficient (resp. reliable) if there exists a positive constant $C_{\mathtt{eff}}$ (resp. $C_{\mathtt{rel}}$), independent of the mesh sizes, such that
\[
C_{\mathtt{eff}}\,\Theta \,+\, \mathtt{h.o.t.} \,\,\le\,\, \|\mathrm{error}\|\,\,\le\,\,
C_{\mathtt{rel}}\,\Theta \,+\, \mathtt{h.o.t.}\,,
\]
where {\tt h.o.t.} is a generic expression denoting one or several terms of higher order. We remark that up to the authors' knowledge, the present work provides the first {\it a posteriori} error analyses of mixed finite element methods for the coupled Brinkman--Forchheimer/Darcy equations.

This paper is organized as follows. The remainder of this section introduces some 
standard notations and functional spaces. In Section \ref{sec:model-and-vf}, 
we recall from \cite{cd2023} the model problem and its continuous and 
discrete mixed variational formulations. Next, in Section \ref{sec:a-posteriori-error-analysis}, 
we derive in full detail a reliable and efficient residual-based {\it a posteriori} 
error estimator in 2D. Several numerical results illustrating the reliability and efficiency 
of the {\it a posteriori} error estimator, as well as the good performance of 
the associated adaptive algorithm and the recovery of optimal rates of convergence, 
are reported in Section \ref{sec:numerical-results}. Finally, further properties 
to be utilized for the derivation of the reliability and efficiency estimates are provided 
in Appendices \ref{app:preliminaries-for-reliability} and \ref{app:preliminaries-for-efficiency}, respectively.

\subsection{Preliminary notations}

Let $\cO\subset \R^2$ be a domain with Lipschitz boundary $\Gamma$. 
For $s\geq 0$ and $t\in[1,+\infty]$, we denote by $\L^t(\cO)$ and $\W^{s,t}(\cO)$ 
the usual Lebesgue and Sobolev spaces endowed with the norms $\|\cdot\|_{0,t;\cO}$ and $\|\cdot\|_{s,t;\cO}$, respectively.
Note that $\W^{0,t}(\cO)=\L^t(\cO)$. 
If $t = 2$, we write $\H^{s}(\cO)$ instead of $\W^{s,2}(\cO)$, and denote 
the corresponding norm and seminorm by $\|\cdot\|_{s,\cO}$ and $|\cdot|_{s,\cO}$, respectively. We will denote the corresponding vectorial and tensorial counterparts of a generic scalar functional space $\H$ by $\bH$ and $\bbH$. 
The $\L^2(\Gamma)$ inner product or duality pairing
is denoted by $\pil\cdot,\cdot\pir_\Gamma$.
In turn, for any vector field $\bv:=(v_1,v_2)$, we set the gradient and divergence operators as
\begin{equation*}
\nabla\bv := \left(\frac{\partial\,v_i}{\partial\,x_j}\right)_{i,j=1,2}\qan
\div(\bv) := \sum^{2}_{j=1} \frac{\partial\,v_j}{\partial\,x_j}.
\end{equation*}
When no confusion arises $| \cdot |$ will denote the Euclidean norm in $\R^2$.
In addition, in the sequel we will make use of the well-known H\"older inequality given by
\begin{equation*}
\int_{\cO} |f\,g| \leq \|f\|_{0,t;\cO}\,\|g\|_{0,t^*;\cO}
\quad \forall\, f\in \L^t(\cO),\,\forall\, g\in \L^{t^*}(\cO), 
\quad\mbox{with}\quad \frac{1}{t} + \frac{1}{t^*} = 1 \,.
\end{equation*}
Finally, we recall that $\H^1(\cO)$ is continuously embedded into $\L^t(\cO)$ for $t\geq 1$ 
in $\R^2$.
More precisely, we have the following inequality
\begin{equation}\label{eq:Sobolev-inequality}
\|w\|_{0,t;\cO} 
\,\leq\, C_{i_t} 
\|w\|_{1,\cO}\quad 
\forall\,w \in \H^1(\cO), 
\end{equation}
with $C_{i_t}>0$ a positive constant 
depending only on $|\cO|$ and $t$ (see \cite[Theorem 1.3.4]{Quarteroni-Valli}).


\section{The model problem and its variational formulation}\label{sec:model-and-vf}

In this section we recall from \cite{cd2023} the model problem, its mixed variational formulation, and the associated conforming mixed finite element method.

\subsection{The coupled Brinkman--Forchheimer/Darcy problem}

In order to describe the geometry, we let $\Omega_\rB$ and $\Omega_\rD$ be two bounded and simply connected polygonal domains in $\R^2$ such that $\partial\Omega_\rB\cap\partial\Omega_\rD = \Sigma \neq \emptyset$ and $\Omega_\rB\cap\Omega_\rD = \emptyset$. 
Then, let $\Gamma_\rB := \partial\Omega_\rB \setminus \ov{\Sigma}$, $\Gamma_\rD := \partial\Omega_\rD \setminus \ov{\Sigma}$, and denote by $\bn$ the unit normal vector on the boundaries, which is chosen pointing outward from $\Omega := \Omega_\rB\cup\Sigma\cup\Omega_\rD$ and $\Omega_\rB$ (and hence inward to $\Omega_\rD$ when seen on $\Sigma$). 
On $\Sigma$ we also consider a unit tangent vector $\bt$ (see Figure~\ref{fig:dominio-2d}). 
Then, given source terms $\f_\rB$, $\f_\rD$, and $g_\rD$, we are interested in the coupling of the Brinkman--Forchheimer and Darcy equations, which is formulated in terms of the velocity-pressure pair $(\bu_\star,p_\star)$ in $\Omega_\star$, with $\star\in \{\rB,\rD\}$.
More precisely, the sets of equations in the Brinkman--Forchheimer and Darcy domains $\Omega_\rB$ and $\Omega_\rD$, are, respectively,
\begin{equation}\label{eq:BF-model}
\begin{array}{c}
\bsi_\rB = - p_\rB\bbI + \mu\nabla\bu_\rB \qin \Omega_\rB, \quad
\bK^{-1}_\rB\bu_\rB + \tF\,|\bu_{\rB}|^{\rho-2}\bu_{\rB} - \bdiv(\bsi_\rB) = \f_\rB \qin \Omega_\rB, \\ [1ex]
\div(\bu_\rB) = 0 \qin \Omega_\rB, \quad \bu_\rB = \0 \qon \Gamma_\rB,
\end{array}
\end{equation}
and
\begin{equation}\label{eq:Darcy-model}
\bK^{-1}_\rD\bu_\rD + \nabla p_\rD = \f_\rD \qin \Omega_\rD, \quad
\div(\bu_\rD) = g_\rD \qin \Omega_\rD, \quad \bu_\rD\cdot\bn = 0 \qon \Gamma_\rD,
\end{equation}
where $\bsi_\rB$ is the Cauchy stress tensor, $\mu$ is the kinematic viscosity of the fluid,  
$\tF > 0$ is the Forchheimer coefficient, $\rho$ is a given number in $[3,4]$, and 
$\bK_{\star}\in \bbL^\infty(\Omega_\star)$ are symmetric
tensors in $\Omega_\star$, with $\star\in \{\rB,\rD\}$, equal to the symmetric permeability tensors scaled by the kinematic viscosity. 
Throughout the paper we assume that there exists $C_{\bK_\star} > 0$ such that 
\begin{equation}\label{eq:permeability-constrain}
\bw\cdot\bK^{-1}_\star(\bx)\bw \geq C_{\bK_\star} |\bw|^2,
\end{equation}
for almost all $\bx\in\Omega_\star$, and for all $\bw\in\R^2$.
%
%
%
\begin{figure}[H]
\centering\includegraphics[scale=1]{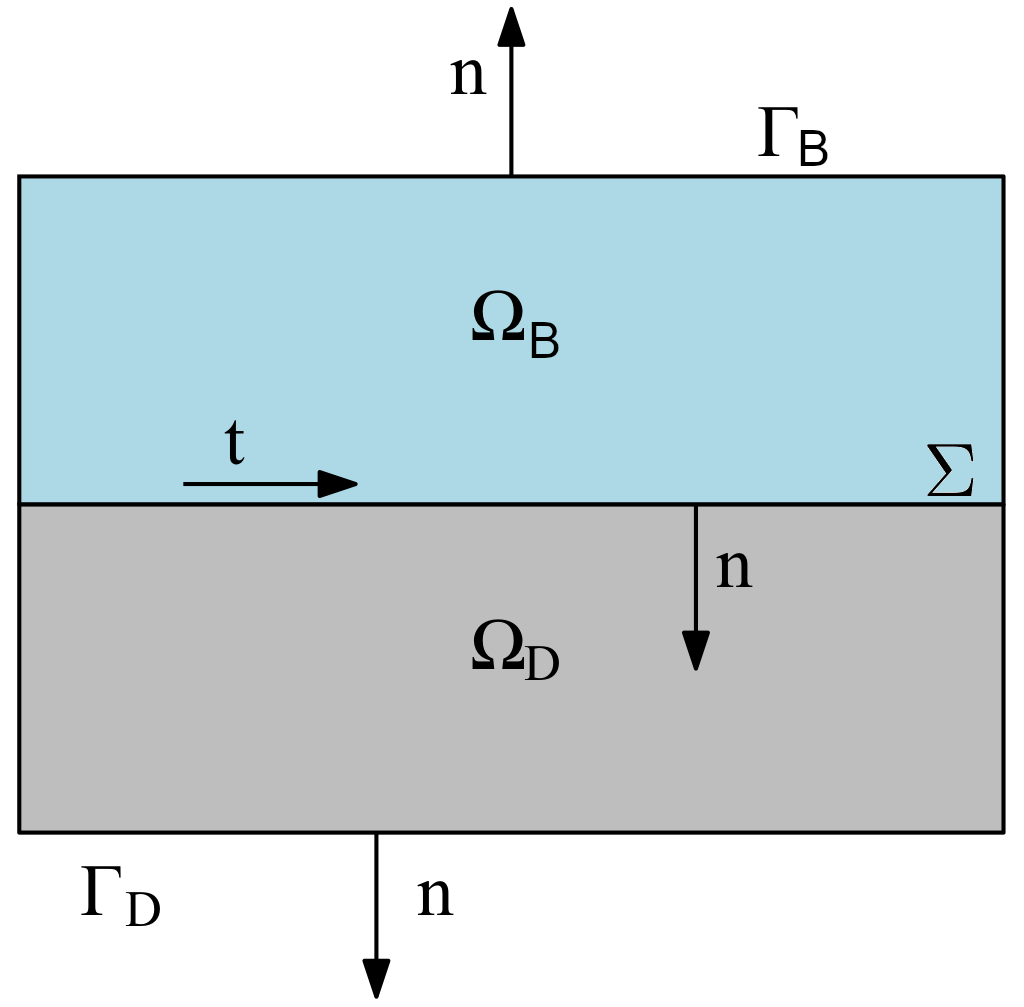}

\vspace{-0.2cm}
\caption{Sketch of a 2D geometry of the coupled Brinkman--Forchheimer/Darcy model}
\label{fig:dominio-2d}
\end{figure}

To couple the Brinkman--Forchheimer and the Darcy models, we proceed as in \cite{cd2023} (see similar approaches in \cite{Ehrhardt:2012:PICP,Dumitrache:2012:ATN,lcs2021}), and consider transmission conditions that impose, respectively, the mass conservation and continuity of momentum across the interface $\Sigma$:
\begin{equation}\label{eq:transmission-condition}
\bu_\rB\cdot\bn = \bu_\rD\cdot\bn 
\qan
\ds \bsi_\rB\bn = -p_\rD\bn \qon \Sigma \,.
\end{equation}

Notice that, according to the compressibility conditions, the boundary conditions on $\bu_\rD$ and $\bu_\rB$, and the principle of mass conservation (cf. first equation in \eqref{eq:transmission-condition}), $g_\rD$ must satisfy the compatibility condition:
\begin{equation*}
\int_{\Omega_\rD} g_\rD = 0.
\end{equation*}
In addition, we also observe that other boundary conditions can be considered. For example, similarly to \cite{do2017}, one could impose
\begin{equation}\label{eq:alternative-BC}
\begin{array}{c}
\ds \bu_\rB = \0 \qon \Gamma^d_\rB\,,\quad \bsi_\rB\bn = \0 \qon \Gamma^n_\rB\,, \\[2ex]
\ds p_\rD = 0 \qon \Gamma^d_\rD\,,\quad \bu_\rD\cdot\bn = 0 \qon \Gamma^n_\rD \,,
\end{array}
\end{equation}
where $\Gamma^d_\rB\cup \Gamma^n_\rB = \Gamma_\rB, \Gamma^d_\rD\cup \Gamma^n_\rD=\Gamma_\rD$, 
and $\Gamma^d_\rB\cap \Sigma = \emptyset, \Gamma^d_\rD\cap \Sigma = \emptyset$.
The analysis studied in this work can be extended with minor modifications to the case when \eqref{eq:alternative-BC} are used.
However, for the sake of simplicity, we focus on \eqref{eq:BF-model}--\eqref{eq:Darcy-model} for the analysis,
and consider \eqref{eq:alternative-BC} in one of the numerical examples in Section \ref{sec:numerical-results}.


\subsection{The variational formulation}

We first recall the following notations from \cite[Section 2.2]{cd2023}. Given $\star \in \{\rB, \rD\}$, let
\begin{equation*}
(p,q)_\star := \int_{\Omega_\star} p\,q,\quad 
(\bu,\bv)_\star := \int_{\Omega_\star} \bu\cdot\bv, \qan
(\bsi,\btau)_\star := \int_{\Omega_\star} \bsi:\btau,
\end{equation*}
where, given two arbitrary tensors $\bsi$ and $\btau$, $\bsi:\btau = \tr(\bsi^\rt\btau)=\ds\sum_{i,j=1}^2 \sigma_{ij}\tau_{ij}$.
Furthermore, we consider the Hilbert space
\begin{equation*}
\bH(\div;\Omega_\rD) := \Big\{\bv_\rD\in\bL^2(\Omega_\rD):\quad \div(\bv_\rD) \in\L^2(\Omega_\rD) \Big\},
\end{equation*}
endowed with the norm
$\|\bv_\rD\|^2_{\div;\Omega_\rD} := \|\bv_\rD\|^2_{0,\Omega_\rD} + \|\div(\bv_\rD)\|^2_{0,\Omega_\rD}$,
and the following subspaces of $\bH^1(\Omega_\rB)$ and $\bH(\div;\Omega_\rD)$, respectively,
\begin{align*}
\ds \bH^1_{\Gamma_\rB} (\Omega_\rB) & \,:=\, \Big\{\bv_\rB\in\bH^1(\Omega_\rB):\quad \bv_\rB = \0 \qon \Gamma_\rB \Big\}, \\[1ex]
\ds \bH_{\Gamma_\rD} (\div;\Omega_\rD) & \,:=\, \Big\{ \bv_\rD\in\bH(\div;\Omega_\rD):\quad \bv_\rD\cdot\bn = 0 \qon \Gamma_\rD \Big\} \,.
\end{align*}
In addition, we write $\Omega := \Omega_\rB \cup \Sigma \cup \Omega_\rD$,
and define $p:= p_\rB \chi_\rB + p_\rD \chi_\rD$, with $\chi_\star$ being the characteristic function:
\begin{equation*}
\chi_\star := \left\{\begin{array}{lll}
1 & \mbox{ in } & \Omega_\star, \\ [1ex]
0 & \mbox{ in } & \Omega\setminus \ov{\Omega}_\star,
\end{array}\right.
\quad\mbox{for }\, \star\in\{\rB,\rD\}\,,
\end{equation*}
and introduce the space $\ds\L^2_0(\Omega) := \Big\{q\in \L^2(\Omega):\quad \int_\Omega q =0 \Big\}$.

Next, for the sake of clarity, we group the spaces and unknowns as follows:
\begin{equation*}
\begin{array}{c}
\bH := \bH^1_{\Gamma_\rB} (\Omega_\rB)\times \bH_{\Gamma_\rD} (\div;\Omega_\rD), \quad 
\bQ := \L^2_0(\Omega)\times \H^{1/2}(\Sigma), \\ [1ex]
\ds \bu := (\bu_\rB,\bu_\rD)  \in \bH, \quad (p,\lambda) \in \bQ,
\end{array}
\end{equation*}
where $\lambda:= p_\rD|_\Sigma\in \H^{1/2}(\Sigma)$ is an additional unknown.
In turn, the corresponding norms associated to $\bH$ and $\bQ$ are given by
\begin{equation*}
\|\bv\|^2_{\bH} \,:=\, \|\bv_\rB\|^2_{1,\Omega_\rB} + \|\bv_\rD\|^2_{\div;\Omega_\rD} \quad \forall\,\bv:=(\bv_\rB,\bv_\rD)\in \bH 
\end{equation*}
and
\begin{equation*}
\|(q,\xi)\|^2_{\bQ} \,:=\, \|q\|^2_{0,\Omega} + \|\xi\|^2_{1/2;\Sigma} \quad \forall\,(q,\xi) \in \bQ\,.
\end{equation*}
Thus, we arrive at the mixed variational formulation: 
Find $(\bu,(p,\lambda)) \in \bH\times\bQ$, such that
\begin{equation}\label{eq:mixed-variational-formulation}
\begin{array}{llll}
[\ba(\bu),\bv] + [\bb(\bv),(p,\lambda)] & = & [\f,\bv] & \forall\,\bv\in \bH, \\ [2ex]
[\bb(\bu),(q,\xi)] & = & [\g,(q,\xi)] & \forall\,(q,\xi) \in \bQ,
\end{array}
\end{equation}
where, the operator $\ba : \bH \to \bH'$ is defined by
\begin{equation}\label{eq:definition-a}
[\ba(\bu),\bv] 
\,:=\, \mu (\nabla\bu_\rB,\nabla\bv_\rB)_\rB 
+ (\bK^{-1}_\rB\bu_\rB,\bv_\rB)_\rB
+ \tF\,(|\bu_\rB|^{\rho-2}\bu_\rB,\bv_\rB)_\rB  
+ \left(\bK^{-1}_\rD \bu_\rD,\bv_\rD\right)_\rD\,, 
\end{equation}
whereas the operator $\bb:\bH\to\bQ'$ is given by
\begin{equation}\label{eq:definition-b}
[\bb(\bv),(q,\xi)] := - (q,\div(\bv_\rB))_\rB - (q,\div(\bv_\rD))_\rD + \pil \bv_\rB\cdot\bn - \bv_\rD\cdot\bn,\xi \pir_\Sigma.
\end{equation}
In turn, the functionals $\f\in \bH'$ and $\g\in \bQ'$ are defined by
\begin{equation}\label{eq:definition-rhs}
[\f,\bv] := (\f_\rB,\bv_\rB)_\rB + (\f_\rD,\bv_\rD)_\rD \qan [\g,(q,\xi)] := -(g_\rD,q)_\rD.
\end{equation}
In all the terms above, $[\,\cdot,\cdot\,]$ denotes the duality pairing induced by the corresponding operators.
The solvability analysis for \eqref{eq:mixed-variational-formulation} is established in \cite[Theorem 3.5]{cd2023}.
In particular, we recall the following {\it a priori} estimate for later use:
\begin{equation}\label{eq:a-priori-bound-u}
\|(\bu_\rB,\bu_\rD)\|_\bH \,=\, \|\bu\|_{\bH} \,\leq\, C\,\Big( \|\f_\rB\|_{0,\Omega_\rB} + \|\f_\rD\|_{0,\Omega_\rD} 
+ \|g_\rD\|_{0,\Omega_\rD} + \|g_\rD\|^{\rho-1}_{0,\Omega_\rD} \Big)\,, 
\end{equation}
where $C$ is a positive constant, independent of the solution.


\subsection{The conforming finite element method}

Let $\cT^\rB_h$ and $\cT^\rD_h$ be respective triangulations of the domains $\Omega_\rB$ 
and $\Omega_\rD$ formed by shape-regular triangles, denote by $h_\rB$ 
and $h_\rD$ their corresponding mesh sizes, and let $h : = \max\big\{ h_\rB, h_\rD \big\}$. 
Assume that $\cT^\rB_h$ and $\cT^\rD_h$ match on $\Sigma$ so that $\cT_h := \cT^\rB_h \cup \cT^\rD_h$ is a triangulation of 
$\Omega := \Omega_\rB \cup \Sigma \cup \Omega_\rD$. 
Then, given an integer $\ell\geq 0$ and a subset $S$ of $\R^2$, we denote by $\rP_\ell(S)$ the space of polynomials of total degree at most $\ell$ defined on $S$.
For each $T\in \cT^\rD_h$ we consider the local Raviart--Thomas space of the lowest order \cite{Raviart-Thomas}:
\begin{equation*}
\bRT_0(T) := [\rP_0(T)]^2 \oplus \rP_0(T)\,\bx \,,
\end{equation*}
where $\bx:=(x_1,x_2)^\rt$ is a generic vector of $\R^2$.
In addition, for each $T\in \cT^\rB_h$ we denote by $\bBR(T)$ the local Bernardi--Raugel space \cite{br1985}:
\begin{equation*}
\bBR(T) := [\rP_1(T)]^2 \oplus \mathrm{span}\Big\{ \eta_2\eta_3\bn_1,\eta_1\eta_3\bn_2,\eta_1\eta_2\bn_3 \Big\},
\end{equation*}
where $\big\{ \eta_1, \eta_2, \eta_3\big\}$ are the baricentric coordinates of $T$, and $\big\{\bn_1, \bn_2, \bn_3\big\}$ are the unit outward normals to the opposite sides of the corresponding vertices of $T$. Hence, we define the following conforming finite element subspaces:
\begin{align*}
\bH_h (\Omega_\rB)  & \,:= \, \Big\{ \bv\in\bH^1(\Omega_\rB) : \quad \bv|_T \in \bBR(T),\quad \forall\, T\in\cT^\rB_h \Big\} \,, \\
\bH_h(\Omega_\rD)  & \,:=\, \Big\{ \bv\in \bH(\div;\Omega_\rD) : \quad \bv|_T \in \bRT_0(T),\quad \forall\, T\in\cT^\rD_h \Big\} \,, \\
\L_h(\Omega) & \,:=\, \Big\{ q\in \L^2(\Omega) :\quad q|_T\in \rP_0(T), \quad \forall\, T\in \cT_h \Big\} \,.
\end{align*}
Then, the finite element subspaces for the velocities and pressure are, respectively,
\begin{align}
\bH_{h,\Gamma_\rB}(\Omega_\rB) & \,:=\, \bH_h(\Omega_\rB)\cap \bH^1_{\Gamma_\rB}(\Omega_\rB) \,,  \nonumber \\[1ex] 
\bH_{h,\Gamma_\rD}(\Omega_\rD) & \,:=\, \bH_h(\Omega_\rD)\cap \bH_{\Gamma_\rD}(\div;\Omega_\rD) \,, \label{eq:FEM-1} \\[1ex]
\L_{h,0}(\Omega) & \,:=\, \L_h(\Omega)\cap \L^2_0(\Omega) \,. \nonumber
\end{align}

Next, to introduce the finite element subspace of $\H^{1/2}(\Sigma)$, 
we denote by $\Sigma_h$ the partition of $\Sigma$ inherited from $\cT^\rD_h$ (or $\cT^\rB_h$)
and assume without loss of generality, that the number of edges of $\Sigma_h$ is even.
Then, we let $\Sigma_{2h}$ be the partition of 
$\Sigma$ that arises by joining pairs of adjacent edges of $\Sigma_h$, 
denote the resulting edges still by $e$, and define $h_\Sigma := 
\max\{ h_e:\quad e\in \Sigma_{2h} \}$. If the number of edges of $\Sigma_h$ 
were odd, we first reduce it to the even case by joining any pair of two adjacent 
elements, construct $\Sigma_{2h}$ from this reduced partition, and define $h_\Sigma$
as indicated above.
Then, we define the following finite element subspace for $\lambda\in \H^{1/2}(\Sigma)$
\begin{equation}\label{eq:FEM-2}
\Lambda_h(\Sigma) := \Big\{ \xi_h\in \cC(\Sigma) :\quad \xi_h|_e\in \rP_1(e) \quad \forall\,e\in \Sigma_{2h} \Big\} \,.
\end{equation}

In this way, grouping the unknowns and spaces as follows:
\begin{equation*}
\begin{array}{c}
\bH_h := \bH_{h,\Gamma_\rB}(\Omega_\rB)\times \bH_{h,\Gamma_\rD} (\Omega_\rD), \quad \bQ_h := \L_{h,0}(\Omega)\times \Lambda_h(\Sigma), \\ [1ex]
\ds \bu_h := (\bu_{\rB,h}, \bu_{\rD,h})  \in \bH_h, \quad (p_h, \lambda_h) \in \bQ_h,
\end{array}
\end{equation*}
where $p_h:= p_{\rB,h} \chi_\rB + p_{\rD,h} \chi_\rD$, the Galerkin scheme for \eqref{eq:mixed-variational-formulation} reads:  
Find $(\bu_h,(p_h,\lambda_h)) \in \bH_h\times\bQ_h$, such that
\begin{equation}\label{eq:discrete-mixed-formulation}
\begin{array}{llll}
[\ba(\bu_h),\bv_h] + [\bb(\bv_h),(p_h,\lambda_h)] & = & [\f,\bv_h] & \forall\,\bv_h:=(\bv_{\rB,h},\bv_{\rD,h}) \in \bH_h \,, \\ [2ex]
[\bb(\bu_h),(q_h,\xi_h)] & = & [\g,(q_h,\xi_h)] & \forall\,(q_h,\xi_h) \in \bQ_h\,.
\end{array}
\end{equation}
The solvability analysis and {\it a priori} error bounds for \eqref{eq:discrete-mixed-formulation} are established in \cite[Theorems 4.3 and 4.6]{cd2023}.
In particular, we recall the following {\it a priori} estimate for later use: 
\begin{equation}\label{eq:a-priori-bound-uh}
\|(\bu_{\rB,h}, \bu_{\rD,h})\|_\bH \,=\, \|\bu_h\|_{\bH} \,\leq\, \wh{C}\,\Big( \|\f_\rB\|_{0,\Omega_\rB} + \|\f_\rD\|_{0,\Omega_\rD} 
+ \|g_\rD\|_{0,\Omega_\rD} + \|g_\rD\|^{\rho-1}_{0,\Omega_\rD} \Big)\,, 
\end{equation}
where $\wh{C}$ is a positive constant, independent of $h$ and the solution.


\section{A residual-based \textit{a posteriori} error estimator}\label{sec:a-posteriori-error-analysis}

In this section we derive a reliable and efficient residual-based {\it a posteriori} error estimator 
for the two-dimensional version of the Galerkin scheme \eqref{eq:discrete-mixed-formulation}.
To this end, from now on we employ the notations and results from Appendix \ref{app:preliminaries-for-reliability}.
Recalling that $(\bu_h,(p_h,\lambda_h)) \in\bH_h\times \bQ_h$ 
is the unique solution of the discrete problem \eqref{eq:discrete-mixed-formulation}, 
and denoting $p_{\rB,h} := p_h|_{\Omega_\rB}$ and $p_{\rD,h} := p_h|_{\Omega_\rD}$,
we define for each $T\in\cT^\rB_h$ the local error indicator 
\begin{equation}\label{eq:local-estimator-B}
\begin{array}{l}
\ds \Theta^2_{\rB,T} \,:=\, \|\div(\bu_{\rB,h})\|^2_{0,T} 
+ h^2_T \left\|\f_\rB + \bdiv(\bsi_{\rB,h}) - \bK^{-1}_\rB\bu_{\rB,h} - \tF\,|\bu_{\rB,h}|^{\rho-2}\bu_{\rB,h} \right\|^2_{0,T} \\[3ex]
\ds\quad + \sum_{e\in\cE(T)\cap\cE_h(\Omega_\rB)} h_e \big\|\jump{\bsi_{\rB,h}\bn}\big\|^2_{0,e} 
+ \sum_{e\in\cE(T)\cap\cE_h(\Sigma)} h_e \left\|\bsi_{\rB,h}\bn + \lambda_h\bn \right\|^2_{0,e},
\end{array}
\end{equation}
where, $\jump{ \cdot }$ denotes the jump operator defined in \eqref{eq:jump-tensor-n}, and
\begin{equation}\label{eq:stress-Bh-definition}
\bsi_{\rB,h} \,:=\, -\,p_{\rB,h}\bbI + \mu\nabla\bu_{\rB,h} \quad \mbox{on each}\quad T\in\cT^\rB_h \,.
\end{equation}
Similarly, for each $T\in\cT^\rD_h$ we set
\begin{equation}\label{eq:local-estimator-D}
\begin{array}{l}
\ds \Theta^2_{\rD,T} \,:=\, \|g_\rD - \div(\bu_{\rD,h})\|^2_{0,T} 
+ h^2_T \left\|\f_\rD - \bK^{-1}_\rD \bu_{\rD,h}\right\|^2_{0,T} \\ [4ex]
\ds\quad +\,\, h^2_T\big\|\rot\left(\f_\rD - \bK^{-1}_\rD\bu_{\rD,h}\right)\big\|^2_{0,T}
+ \sum_{e\in \cE(T)\cap \cE_h(\Omega_\rD)} h_e\,\left\|\jump{\left(\f_\rD - \bK^{-1}_\rD\bu_{\rD,h}\right)\cdot\bt}\right\|^2_{0,e} \\[4ex]
\ds\quad +\, \sum_{e\in \cE(T)\cap \cE_h(\Sigma)} \Bigg\{ h_e\,\left\|\left(\f_\rD - \bK^{-1}_\rD\bu_{\rD,h}\right)\cdot\bt - \frac{d\,\lambda_h}{d\,\bt} \right\|^2_{0,e} \\[3ex]
\ds\hspace{3.2cm} +\,\, h_e\,\|\lambda_h - p_{\rD,h}\|^2_{0,e}
+ h_e\,\|\bu_{\rB,h}\cdot\bn - \bu_{\rD,h}\cdot\bn\|^2_{0,e} \Bigg\} \,,
\end{array}
\end{equation}
so that the global {\it a posteriori} error estimator is given by
\begin{equation}\label{eq:global-estimator}
\Theta_{\tBFD} \,:=\, \left\{\sum_{T\in\cT^\rB_h} \Theta^2_{\rB,T} 
+ \sum_{T\in\cT^\rD_h} \Theta^2_{\rD,T} \right\}^{1/2} \,.
\end{equation}
Notice that the third term defining $\Theta^2_{\rD,T}$,
require that $\f_\rD\in \bH^{1}(T)$ for each $T\in\cT^\rD_h$. 
Note also that the second term in \eqref{eq:local-estimator-D} can be replaced by $ h^2_T \left\|\nabla p_{\rD,h} - \big(\f_\rD - \bK^{-1}_\rD \bu_{\rD,h}\big) \right\|^2_{0,T}$, since $p_{\rD,h}|_T\in \rP_0(T)$, and thus $\nabla p_{\rD,h} = \0$ in $T$ for each $T\in\cT^\rD_h$.
This remark is made to emphasize the residual nature of the estimator.

The main goal of the present section is to establish, under suitable assumptions, the existence of positive constants $C_{\tt eff}$ and $C_{\tt rel}$, independent of the meshsizes and the continuous and discrete solutions, such that
\begin{equation}\label{eq:efficience-reliable}
C_{\tt eff}\,\Theta_{\tBFD} \,+\, {\tt h. o. t} 
\,\leq\, \|\bu - \bu_h\|_{\bH} \,+\, \|(p,\lambda) - (p_h,\lambda_h)\|_{\bQ}
\,\leq\, C_{\tt rel}\,\Theta_{\tBFD}\,,
\end{equation}
where ${\tt h. o. t.}$ is a generic expression denoting one or several terms of higher order. 
The upper and lower bounds in \eqref{eq:efficience-reliable}, which are known as the reliability and efficiency of $\Theta_{\tBFD}$, are derived below in Sections \ref{sec:reliability} and \ref{sec:efficiency}, respectively.

\subsection{Reliability of the \textit{a posteriori} error estimator}\label{sec:reliability}

The main result of this section, which establishes the reliability of $\Theta_\tBFD$, reads as follows.
\begin{thm}\label{thm:reliability}
There exists a constant $C_{\tt rel}>0$, independent of $h$, such that
\begin{equation}\label{eq:reliability-estimate}
\|\bu - \bu_h\|_{\bH} \,+\, \|(p,\lambda) - (p_h,\lambda_h)\|_{\bQ}
\,\leq\, C_{\tt rel}\,\Theta_\tBFD \,.
\end{equation}
\end{thm}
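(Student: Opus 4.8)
The plan is to bound the total error by a dual (inf-sup) argument combined with the monotonicity properties of the operator $\ba$ and a stable Helmholtz decomposition, following the Banach/Hilbert strategies of \cite{bg2010,agr2016,ce2024}. First I would introduce the global residual functionals obtained by subtracting the discrete equations \eqref{eq:discrete-mixed-formulation} from the continuous ones \eqref{eq:mixed-variational-formulation}: a functional $\mathcal{R}\in\bH'$ defined by $[\mathcal{R},\bv]:=[\f,\bv]-[\ba(\bu_h),\bv]-[\bb(\bv),(p_h,\lambda_h)]$ and a functional $\mathcal{S}\in\bQ'$ defined by $[\mathcal{S},(q,\xi)]:=[\g,(q,\xi)]-[\bb(\bu_h),(q,\xi)]$. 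From the error equations one has $[\ba(\bu),\bv]-[\ba(\bu_h),\bv]+[\bb(\bv),(p,\lambda)-(p_h,\lambda_h)]=[\mathcal{R},\bv]$ and $[\bb(\bu-\bu_h),(q,\xi)]=[\mathcal{S},(q,\xi)]$. Using the continuous inf-sup condition for $\bb$ (the global one underlying the well-posedness in \cite[Theorem 3.5]{cd2023}), the pressure-Lagrange multiplier error $\|(p,\lambda)-(p_h,\lambda_h)\|_\bQ$ is controlled by $\|\mathcal{R}\|_{\bH'}$ plus the Lipschitz-continuity bound on $\ba(\bu)-\ba(\bu_h)$ times $\|\bu-\bu_h\|_\bH$; and the strong monotonicity of $\ba$ on the kernel of $\bb$ (again from \cite{cd2023}), together with $\mathcal{S}$ measuring the defect of $\bu-\bu_h$ from that kernel, yields a bound for $\|\bu-\bu_h\|_\bH$ in terms of $\|\mathcal{R}\|_{\bH'}$ and $\|\mathcal{S}\|_{\bQ'}$. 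Combining these two estimates and absorbing, one arrives at
\[
\|\bu-\bu_h\|_\bH + \|(p,\lambda)-(p_h,\lambda_h)\|_\bQ \,\le\, C\big(\|\mathcal{R}\|_{\bH'} + \|\mathcal{S}\|_{\bQ'}\big)\,.
\]

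The bulk of the work is then to show $\|\mathcal{R}\|_{\bH'} + \|\mathcal{S}\|_{\bQ'} \le C\,\Theta_\tBFD$. The functional $\mathcal{S}$ is the easy part: its components are exactly $\div(\bu_{\rB,h})$, $g_\rD-\div(\bu_{\rD,h})$, and the normal-velocity jump $\bu_{\rB,h}\cdot\bn-\bu_{\rD,h}\cdot\bn$ on $\Sigma$, which appear verbatim in $\Theta^2_{\rB,T}$ and $\Theta^2_{\rD,T}$; here the $\H^{1/2}(\Sigma)$-duality estimate for the interface term uses a trace/scaling argument producing the weights $h_e$. For $\mathcal{R}$ I would test against $\bv=(\bv_\rB,\bv_\rD)\in\bH$. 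The $\Omega_\rB$-contribution is handled by Galerkin orthogonality against the Bernardi--Raugel interpolant (or a Clément-type quasi-interpolant), integration by parts element-by-element, Cauchy--Schwarz, and the standard local approximation estimates (stated in Appendix \ref{app:preliminaries-for-reliability}); this produces the element residual $\f_\rB+\bdiv(\bsi_{\rB,h})-\bK^{-1}_\rB\bu_{\rB,h}-\tF|\bu_{\rB,h}|^{\rho-2}\bu_{\rB,h}$ with weight $h_T$, the interior normal-stress jumps $\jump{\bsi_{\rB,h}\bn}$ with weight $h_e^{1/2}$, and the interface term $\bsi_{\rB,h}\bn+\lambda_h\bn$. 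The $\Omega_\rD$-contribution is the delicate one: since $\bv_\rD$ only lives in $\bH(\div;\Omega_\rD)$ there is no direct Clément interpolant, so I would invoke the stable Helmholtz decomposition in Hilbert spaces (Appendix \ref{app:preliminaries-for-reliability}) writing $\bv_\rD=\nabla\beta+\bcurl\chi$ (modulo boundary conditions), use Raviart--Thomas interpolation on the $\nabla\beta$ part via Galerkin orthogonality, and use Clément interpolation on $\chi$ for the $\bcurl\chi$ part; integration by parts on the latter brings in the tangential residuals $(\f_\rD-\bK^{-1}_\rD\bu_{\rD,h})\cdot\bt$, the rotor term $\rot(\f_\rD-\bK^{-1}_\rD\bu_{\rD,h})$, the tangential jumps, and the interface term involving $d\lambda_h/d\bt$; the remaining interface pieces $\lambda_h-p_{\rD,h}$ arise from the transmission condition $\bsi_\rB\bn=-p_\rD\bn$ tested tangentially.

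The main obstacle I anticipate is twofold. First, the nonlinear Forchheimer term $\tF\,|\bu_\rB|^{\rho-2}\bu_\rB$ with $\rho\in[3,4]$: controlling $[\ba(\bu)-\ba(\bu_h),\bv]$ requires the local Lipschitz bound $\big|\,|\bu_\rB|^{\rho-2}\bu_\rB-|\bu_{\rB,h}|^{\rho-2}\bu_{\rB,h}\,\big|\le C(|\bu_\rB|^{\rho-2}+|\bu_{\rB,h}|^{\rho-2})|\bu_\rB-\bu_{\rB,h}|$, followed by a Hölder estimate with exponents tied to the Sobolev embedding \eqref{eq:Sobolev-inequality}, and one must keep track that the resulting constant depends only on the a priori bounds \eqref{eq:a-priori-bound-u}--\eqref{eq:a-priori-bound-uh} and not on $h$ — this is exactly where the ``independent of the solutions'' claim has to be checked carefully. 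Second, the interface handling: constructing a Helmholtz decomposition for $\bH_{\Gamma_\rD}(\div;\Omega_\rD)$ that is compatible with the mixed boundary condition on $\Gamma_\rD$ and that correctly reproduces the $\H^{1/2}(\Sigma)$-duality pairing of the multiplier, so that all interface contributions land in the prescribed $h_e$-weighted edge terms of $\Theta^2_{\rD,T}$, is the technically most involved step and where I would expect to spend most of the proof.
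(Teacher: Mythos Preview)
Your overall strategy matches the paper's almost step for step: the residual functionals, the inf-sup for $(p,\lambda)$, the strong monotonicity on the kernel of $\bb$ for the velocity error, Cl\'ement interpolation in $\Omega_\rB$, and a stable Helmholtz decomposition in $\Omega_\rD$ are exactly the ingredients used. Two points, however, are glossed over in a way that would cause trouble if you tried to write the details.

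First, the displayed preliminary bound $\|\bu-\bu_h\|_\bH + \|(p,\lambda)-(p_h,\lambda_h)\|_\bQ \le C\big(\|\mathcal{R}\|_{\bH'}+\|\mathcal{S}\|_{\bQ'}\big)$ does \emph{not} follow directly from monotonicity plus inf-sup when $\ba$ is nonlinear. When you split $\bu-\bu_h=\bz+\bw$ with $\bz\in\bV$ and $\|\bw\|_\bH\le\beta^{-1}\|\mathcal{S}\|_{\bQ'}$, the strong monotonicity \eqref{eq:strong-monotonicity-on-V} must be applied to $\bx=\bu-\bw$ and $\by=\bu_h$, and this forces you to estimate $[\ba(\bu-\bw)-\ba(\bu),\bz]$. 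The Lipschitz bound for the Forchheimer term then produces a contribution of order $\|\bw\|_\bH^{\rho-1}$, so the honest preliminary estimate (the paper's Lemma~\ref{lem:preliminary-a-priori-bound}) carries an extra $\|\mathcal{S}\|_{\bQ'}^{\rho-1}$. The paper removes it by \emph{assuming} $\|\mathcal{S}\|_{\bQ'}<1$, arguing that this residual tends to zero anyway; you should flag this assumption rather than suppress it.

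Second, your Helmholtz decomposition $\bv_\rD=\nabla\beta+\bcurl\chi$ is not the one that works here. The paper (Lemma~\ref{lem:Helmholtz-decomposition}) writes $\bv_\rD=\bw_\rD+\bcurl(\beta_\rD)$ with $\bw_\rD\in\bH^1(\Omega_\rD)$ and $\beta_\rD\in\H^1_{\Gamma_\rD}(\Omega_\rD)$; the first piece is an $\bH^1$ field (not a gradient) so that the Raviart--Thomas interpolator $\Pi_h$ can be applied to it and the approximation estimates of Lemma~\ref{lem:Phih-properties} invoked. The term $h_e\|\lambda_h-p_{\rD,h}\|_{0,e}^2$ then arises not from the transmission condition tested tangentially, as you suggest, but from integrating $(\div(\bw_\rD-\Pi_h\bw_\rD),p_h)_\rD$ by parts and using the vanishing edge moments \eqref{eq:RT-interpolation}.
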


We begin the proof of Theorem \ref{thm:reliability} with a preliminary estimate 
for the global error in \eqref{eq:reliability-estimate}. 
Indeed, proceeding analogously to \cite[Section 3.1]{cgo2023} 
(see also \cite[Section 5.1]{cgos2021}), we first introduce the residual functionals 
$\cR_{\f} : \bH\to \R$ and $\cR_{\g} : \bQ\to \R$, defined by
\begin{equation}\label{eq:residue-Rf}
\cR_{\f}(\bv) \,:=\, [\f,\bv] - [\ba(\bu_h),\bv] - [\bb(\bv),(p_h,\lambda_h)] 
\quad \forall\, \bv\in \bH\,,
\end{equation}
and
\begin{equation}\label{eq:residue-Rg}
\cR_{\g}(q,\xi) \,:=\, [\g,(q,\xi)] - [\bb(\bu_{h}),(q,\xi)] 
\quad \forall\,(q,\xi)\in \bQ\,,
\end{equation}
respectively, which, according to the first and second equations of the discrete 
problem \eqref{eq:discrete-mixed-formulation}, satisfy
\begin{equation}\label{eq:R-hat-vanish-in-Hh-Qh}
\cR_{\f}(\bv_h) \,=\, 0 \quad\forall\,\bv_h \in \bH_h
\qan
\cR_{\g}(q_h,\xi_h) \,=\, 0 \quad \forall\, (q_h,\xi_h) \in \bQ_h\,. 
\end{equation}
In addition, denoting $\bV$ as the kernel of the bilinear form $\bb$ (cf. \eqref{eq:definition-b}), we observe that as a corollary of \cite[Lemma 3.3]{cd2023}, taking in particular $\bu=\bx - \by, \bv=\0\in \bV$ and $\bz=\by\in \bH$ in \cite[eq. (3.16)]{cd2023}, the operator $\ba$ (cf. \eqref{eq:definition-a}) satisfies the following strong monotonicity property
\begin{equation}\label{eq:strong-monotonicity-on-V}
[\ba(\bx) - \ba(\by),\bx - \by]
\,\ge\, \gamma_{\tBFD} \, \|\bx - \by\|^2_{\bH} \,,
\end{equation}
for all $\bx, \, \by \in \bH$ such that $\bx - \by\in \bV$, with the same constant $\gamma_{\tBFD}:=\min\{ \mu, C_{\bK_\rB}, C_{\bK_\rD} \}$ from \cite[eq. (3.16)]{cd2023} (cf. \eqref{eq:permeability-constrain}). 

The announced preliminary result is established as follows.
\begin{lem}\label{lem:preliminary-a-priori-bound}
Given $\rho\in [3,4]$.	
Then, there exists a constant $C>0$, independent of $h$, such that
\begin{equation}\label{eq:a-priori-bound}
\|\bu - \bu_h\|_{\bH} \,+\, \|(p,\lambda) - (p_h,\lambda_h)\|_{\bQ}
\,\leq\, C\,\Big\{ \|\cR_{\f}\|_{\bH'} + \|\cR_{\g}\|_{\bQ'} + \|\cR_{\g}\|^{\rho-1}_{\bQ'} \Big\} \,.
\end{equation}
\end{lem}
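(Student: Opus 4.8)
The plan is to combine the inf-sup condition for $\bb$, the strong monotonicity of $\ba$ on the kernel $\bV$, and a Strang-type/triangle-inequality argument relating the errors $\bu-\bu_h$ and $(p,\lambda)-(p_h,\lambda_h)$ to the residuals $\cR_\f$ and $\cR_\g$. First I would use the continuous inf-sup condition for the bilinear form $\bb$ (available from the well-posedness theory of \eqref{eq:mixed-variational-formulation} in \cite[Theorem 3.5]{cd2023}) to construct an element $\bz\in\bH$ with $\|\bz\|_{\bH}\le C\|\cR_\g\|_{\bQ'}$ such that $[\bb(\bz),(q,\xi)]=\cR_\g(q,\xi)$ for all $(q,\xi)\in\bQ$; equivalently $\bb(\bu_h+\bz)=\bb(\bu)$ in $\bQ'$, so that $\bu-\bu_h-\bz\in\bV$. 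Set $\be:=\bu-\bu_h$ and $\bw:=\be-\bz\in\bV$. Then I would test the strong monotonicity property \eqref{eq:strong-monotonicity-on-V} with $\bx=\bu$, $\by=\bu_h+\bz$ (whose difference lies in $\bV$), obtaining
\[
\gamma_{\tBFD}\,\|\bw\|^2_{\bH}\;\le\;[\ba(\bu)-\ba(\bu_h+\bz),\bw].
\]

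Next I would rewrite the right-hand side using the first equation of \eqref{eq:mixed-variational-formulation} and the definition \eqref{eq:residue-Rf} of $\cR_\f$. Since $\bw\in\bV$, the term $[\bb(\bw),(p,\lambda)]$ vanishes, so $[\ba(\bu),\bw]=[\f,\bw]=\cR_\f(\bw)+[\ba(\bu_h),\bw]+[\bb(\bw),(p_h,\lambda_h)]=\cR_\f(\bw)+[\ba(\bu_h),\bw]$, again using $\bw\in\bV$. Hence
\[
\gamma_{\tBFD}\,\|\bw\|^2_{\bH}\;\le\;\cR_\f(\bw)+[\ba(\bu_h)-\ba(\bu_h+\bz),\bw]\;\le\;\|\cR_\f\|_{\bH'}\|\bw\|_{\bH}+\bigl|[\ba(\bu_h)-\ba(\bu_h+\bz),\bw]\bigr|.
\]
The second term must be controlled by the local Lipschitz bound for $\ba$; this is where the nonlinearity of the Forchheimer term $\tF\,|\cdot|^{\rho-2}(\cdot)$ enters, and the bound will be of the form $C(1+\|\bu_h\|_{\bH}^{\rho-2}+\|\bz\|_{\bH}^{\rho-2})\,\|\bz\|_{\bH}\,\|\bw\|_{\bH}$. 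Using the uniform {\it a priori} bound \eqref{eq:a-priori-bound-uh} on $\|\bu_h\|_{\bH}$ (independent of $h$ and data, given the data), this factor is bounded, and since $\|\bz\|_{\bH}\le C\|\cR_\g\|_{\bQ'}$, I arrive at $\|\bw\|_{\bH}\le C\{\|\cR_\f\|_{\bH'}+\|\cR_\g\|_{\bQ'}+\|\cR_\g\|^{\rho-1}_{\bQ'}\}$ after dividing by $\|\bw\|_{\bH}$; the triangle inequality $\|\be\|_{\bH}\le\|\bw\|_{\bH}+\|\bz\|_{\bH}$ then gives the velocity part of \eqref{eq:a-priori-bound}.

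For the pressure-multiplier error, I would again invoke the inf-sup condition for $\bb$: there is $\bv\in\bH$, $\|\bv\|_{\bH}=1$, with
\[
\beta\,\|(p,\lambda)-(p_h,\lambda_h)\|_{\bQ}\;\le\;[\bb(\bv),(p,\lambda)-(p_h,\lambda_h)].
\]
Using the first equations of \eqref{eq:mixed-variational-formulation} and \eqref{eq:discrete-mixed-formulation} (via the definition of $\cR_\f$), one has $[\bb(\bv),(p,\lambda)-(p_h,\lambda_h)]=[\f,\bv]-[\ba(\bu),\bv]-[\bb(\bv),(p_h,\lambda_h)]+[\ba(\bu_h)-\ba(\bu_h),\bv]=\cR_\f(\bv)-[\ba(\bu)-\ba(\bu_h),\bv]$. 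Bounding $|\cR_\f(\bv)|\le\|\cR_\f\|_{\bH'}$ and, again by the local Lipschitz property of $\ba$ together with the {\it a priori} bounds \eqref{eq:a-priori-bound-u}--\eqref{eq:a-priori-bound-uh}, $|[\ba(\bu)-\ba(\bu_h),\bv]|\le C\|\bu-\bu_h\|_{\bH}$, and then substituting the velocity estimate already obtained, yields the desired bound on $\|(p,\lambda)-(p_h,\lambda_h)\|_{\bQ}$. Adding the two contributions gives \eqref{eq:a-priori-bound}.

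The main obstacle is the careful handling of the nonlinear operator $\ba$: one needs a local-Lipschitz-continuity estimate for the Forchheimer term with the correct power $\rho-2$, and must ensure that the multiplicative constants arising from Hölder's inequality and the Sobolev embedding \eqref{eq:Sobolev-inequality} are absorbed using the uniform {\it a priori} bounds \eqref{eq:a-priori-bound-u} and \eqref{eq:a-priori-bound-uh}, so that the resulting constant $C$ is genuinely independent of $h$ and of the solutions. Tracking how the power $\rho-1$ (rather than $\rho-2$) appears in the final estimate — it comes from the factor $\|\bz\|_{\bH}^{\rho-2}\cdot\|\bz\|_{\bH}$ with $\|\bz\|_{\bH}\lesssim\|\cR_\g\|_{\bQ'}$ — is the delicate bookkeeping point.
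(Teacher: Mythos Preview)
Your proposal is correct and follows essentially the same route as the paper: decompose the velocity error via the inf-sup condition for $\bb$ into a piece in $\bV$ plus a small remainder controlled by $\|\cR_{\g}\|_{\bQ'}$, apply the strong monotonicity \eqref{eq:strong-monotonicity-on-V} on the $\bV$-component, absorb the nonlinear perturbation via the Lipschitz/continuity bound for $\ba$ together with the uniform \emph{a priori} estimates, and then recover the pressure--multiplier error from the inf-sup condition and the identity $[\bb(\bv),(p,\lambda)-(p_h,\lambda_h)]=\cR_{\f}(\bv)-[\ba(\bu)-\ba(\bu_h),\bv]$. The only cosmetic differences are that the paper swaps your names $\bz\leftrightarrow\bw$ and applies the monotonicity to the pair $(\bu-\bw,\bu_h)$ rather than $(\bu,\bu_h+\bz)$, so the continuity of $\ba$ is invoked around $\bu$ (using \eqref{eq:a-priori-bound-u}) instead of around $\bu_h$ (using \eqref{eq:a-priori-bound-uh}); the resulting estimates are identical.
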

\begin{proof}
We proceed as in \cite[Lemma 3.2]{cgo2023}.	
Indeed, we observe from \eqref{eq:mixed-variational-formulation} and the definitions of $\cR_{\f}$ 
and $\cR_{\g}$ (cf. \eqref{eq:residue-Rf} and \eqref{eq:residue-Rg}) that
\begin{equation}\label{eq:estimate-A}
[\ba(\bu) - \ba(\bu_h),\bv] + [\bb(\bv),(p,\lambda) - (p_h,\lambda_h)] =  \cR_{\f}(\bv)
\qquad \forall\, \bv\in \bH\,,
\end{equation}
and
\begin{equation}\label{eq:estimate-B}
[\bb(\bu - \bu_h),(q,\xi)] = \cR_{\g}(q,\xi) \qquad \forall \, (q,\xi)\in \bQ \,.
\end{equation}
Thus, we employ the continuous inf-sup condition for $\bb$, which holds with a constant $\beta$
(cf. \cite[Lemma 3.4]{cd2023}), the converse implication of the equivalence 
provided in \cite[Lemma A.42]{Ern-Guermond}, and \eqref{eq:estimate-B}, to deduce that 
there exists $\bw := (\bw_\rB,\bw_\rD) \in \bH$ such that
\begin{equation}\label{eq:b-zuh-properties}
\bb(\bw) \,=\, \bb(\bu - \bu_h) \,=\, \cR_{\g} \qan 
\|\bw\|_{\bH} \,\leq\, \frac{1}{\beta}\,\|\cR_{\g}\|_{\bQ'} \,.
\end{equation}
It follows that the error $\bu - \bu_h$ can be decomposed as
\begin{equation}\label{eq:error-u-decomposition}
\bu - \bu_h = \bz + \bw\,,
\end{equation}
with $\bz := \bu - \bu_h - \bw \in \bV$.
Then, taking $\bv = \bz$ in \eqref{eq:estimate-A}, we find that
\[
[\ba(\bu) - \ba(\bu_h),\bz] =  \cR_{\f}(\bz)\,,
\]
and hence, subtracting and adding $\ba(\bu)$, we obtain
\begin{equation}\label{eq:auxiliary-equation-1}
[\ba(\bu - \bw) - \ba(\bu_h), \bz] \,=\,
[\ba(\bu - \bw) - \ba(\bu), \bz] \,+\, \cR_{\f}(\bz) \,.
\end{equation} 
Then, applying \eqref{eq:strong-monotonicity-on-V} to $\bx = \bu - \bw$ and $\by = \bu_h$, 
and using \eqref{eq:auxiliary-equation-1}, we find that
\[
\gamma_{\tBFD} \, \|\bz\|^2_{\bH}\,\le\, 
\|\ba(\bu - \bw) - \ba(\bu)\|_{\bH'}\,\|\bz\|_{\bH}
+ \|\cR_{\f}\|_{\bH'}\,\|\bz\|_{\bH}\,,
\]
from which, making use of the continuity of $\ba$ (cf. eq. (3.11) in \cite[Lemma 3.2]{cd2023}), which involves a constant $L_\tBFD$ depending on $\tF, \bK_\rD, \bK_\rB$, and $\mu$, 
and then performing simple algebraic computations, we obtain
\begin{equation*}
\gamma_{\tBFD}\,\|\bz\|_\bH 
\,\le\, L_{\tBFD}\,\Big\{ \big(1 + 2\,\|\bu_\rB\|^{\rho-2}_{1,\Omega_\rB} \big) \|\bw_\rB\|_{1,\Omega_\rB} 
+ \|\bw_\rD\|_{\div;\Omega_\rD}
+ \|\bw_\rB\|^{\rho-1}_{1,\Omega_\rB} \Big\} 
+ \|\cR_{\f}\|_{\bH'} \,.
\end{equation*}
The above estimate, together with the fact that $\|\bu_\rB\|_{1,\Omega_\rB}$ is bounded by data (cf. \eqref{eq:a-priori-bound-u}), yields
\begin{equation}\label{eq:auxiliary-bound-2}
\|\bz\|_{\bH} \,\le\, 
c_1\,\Big\{ \|\cR_{\f}\|_{\bH'} + \|\bw\|_{\bH} 
+ \|\bw\|^{\rho-1}_{\bH}  \Big\} \,,
\end{equation}
with $c_1>0$ independent of $h$, and hence, using \eqref{eq:b-zuh-properties}, 
\eqref{eq:error-u-decomposition} and \eqref{eq:auxiliary-bound-2}, we conclude that
\begin{equation}\label{eq:error-bound-u-uh}
\|\bu - \bu_h\|_{\bH} \,\le\, \|\bz\|_{\bH} + \|\bw\|_{\bH}
\,\le\, c_2\,\Big\{ \|\cR_{\f}\|_{\bH'} + \|\cR_{\g}\|_{\bQ'} + \|\cR_{\g}\|^{\rho-1}_{\bQ'} \Big\} \,,
\end{equation}
with $c_2>0$ depending only on $L_{\tBFD}$, $\gamma_{\tBFD}, \beta$, and data.
On the other hand, applying the continuous inf-sup condition 
for $\bb$ (cf. \cite[Lemma 3.4]{cd2023})
to $(p,\lambda) - (p_h,\lambda_h)$, employing the identity \eqref{eq:estimate-A} 
to express $[\bb(\bv),(p,\lambda) - (p_h,\lambda_h)]$, 
and using again the continuity of $\ba$ (cf. \cite[Lemma 3.2]{cd2023}), we deduce that
\begin{equation*}
\begin{array}{l}
\ds
\beta\,\|(p,\lambda) - (p_h,\lambda_h)\|_{\bQ} \,\le\, 
\sup_{\substack{\bv\in\bH\\ \bv\neq \0}} 
\frac{-[\ba(\bu) - \ba(\bu_h),\bv] + \cR_{\f}(\bv)}{\|\bv\|_\bH} \\[2ex]
\ds
\le\,  L_{\tBFD}\,\Big\{ 1+ \big(\|\bu_\rB\|_{1,\Omega_\rB} + \|\bu_{\rB,h}\|_{1,\Omega_\rB}\big)^{\rho-2} \Big\}\|\bu - \bu_{h}\|_{\bH}
+ \|\cR_{\f}\|_{\bH'} \,,
\end{array}
\end{equation*}
which, along with the fact that both $\|\bu_\rB\|_{1,\Omega_\rB}$ and $\|\bu_{\rB,h}\|_{1,\Omega_\rB}$ 
are bounded by data (cf. \eqref{eq:a-priori-bound-u} and \eqref{eq:a-priori-bound-uh}), and some algebraic manipulations, imply
\begin{equation}\label{eq:error-bound-p-lambda-ph-lambdah}
\|(p,\lambda) - (p_h,\lambda_h)\|_{\bQ}
\,\leq\, c_3\,\Big\{ \|\bu - \bu_{h}\|_{\bH} + \|\cR_{\f}\|_{\bH'} \Big\} \,,
\end{equation}
with $c_3>0$ depending only on $L_{\tBFD}, \beta$, and data.
Therefore, the estimate \eqref{eq:a-priori-bound} follows from \eqref{eq:error-bound-u-uh} 
and \eqref{eq:error-bound-p-lambda-ph-lambdah}, thus ending the proof.
\end{proof}	

\medskip

We remark here that, similarly to \cite[eq. (3.29)]{cgo2023}, when $\|\cR_{\g}\|_{\bQ'} < 1$
the term $\|\cR_{\g}\|^{\rho-1}_{\bQ'}$, with $\rho\in [3,4]$, is dominated by $\|\cR_{\g}\|_{\bQ'}$,
whence the former can be neglected, it follows from \eqref{eq:a-priori-bound} that
\begin{equation}\label{eq:a-priori-bound-2}
\|\bu - \bu_h\|_{\bH} \,+\, \|(p,\lambda) - (p_h,\lambda_h)\|_{\bQ}
\,\leq\, C\,\Big\{ \|\cR_{\f}\|_{\bH'} + \|\cR_{\g}\|_{\bQ'} \Big\} \,,
\end{equation} 
with $C>0$, independent of $h$.
Note that when $\|\cR_{\g}\|_{\bQ'}>1$, the term $\|\cR_{\g}\|_{\bQ'}^{\rho-1}$, being dominant, 
will appears in \eqref{eq:a-priori-bound-2} instead of $\|\cR_{\g}\|_{\bQ'}$.  As a consequence, the reliability estimate in Lemmas \ref{lem:reliability-R3} and \ref{lem:reliability-R4}, and the 
local estimators $\Theta_{\rB,T}$ and $\Theta_{\rD,T}$ (cf. \eqref{eq:local-estimator-B}, \eqref{eq:local-estimator-D}) must be 
modified accordingly. The case $\|\cR_{\g}\|_{\bQ'}<1$ is assumed here for sake of simplicity. Nevertheless, being $\cR_{\g}$ a residual expression, it is expected to converge to $0$, which is somehow confirmed later on by the efficiency estimate and numerical results, so that the foregoing assumption seems 
quite reasonable.
Throughout the rest of this section, we provide suitable upper bounds for each one of the terms on the right-hand side of \eqref{eq:a-priori-bound-2}.
To this end, and based on the definitions of the operators $\ba$ and $\bb$ (cf. \eqref{eq:definition-a}, \eqref{eq:definition-b}) as well as the functionals $\f$ and $\g$ (cf. \eqref{eq:definition-rhs}), we first note that the functionals $\cR_{\f}$ and $\cR_{\g}$ can be decomposed as follows:
\begin{equation*}
\cR_{\f}(\bv) \,=\, \cR_1(\bv_\rB) + \cR_2(\bv_\rD) \qan
\cR_{\g}(q,\xi) \,=\, \cR_3(q) + \cR_4(\xi),
\end{equation*}
for all $\bv := (\bv_\rB,\bv_\rD)\in \bH$ and for all $(q,\xi)\in \bQ$, where
\begin{align}
\ds \cR_1(\bv_\rB) \,:=\, & \ds \left(\f_\rB - \bK^{-1}_\rB \bu_{\rB,h} - \tF\,|\bu_{\rB,h}|^{\rho-2}\bu_{\rB,h},\bv_\rB\right)_\rB \nonumber \\[1ex]
& \ds  -\,\, \mu\,(\nabla\bu_{\rB,h},\nabla\bv_\rB)_\rB 
+ (\div(\bv_\rB),p_h)_\rB - \pil\bv_\rB\cdot\bn,\lambda_h\pir_\Sigma \,, \label{def-R1} \\[1.5ex]
\ds \cR_2(\bv_\rD) \,:=\, & \ds \left(\f_\rD - \bK^{-1}_\rD\bu_{\rD,h}, \bv_\rD\right)_\rD + 
(\div(\bv_\rD),p_h)_\rD + \pil\bv_\rD\cdot\bn,\lambda_h\pir_\Sigma \,, \label{def-R2} \\[1.5ex]
\ds \cR_3(q) \,:=\, & \ds (\div(\bu_{\rB,h}),q)_\rB - (g_\rD - \div(\bu_{\rD,h}),q)_\rD \,, \label{def-R3} \\[1.5ex]
\ds \cR_4(\xi) \,:=\, & \ds -\pil\bu_{\rB,h}\cdot\bn - \bu_{\rD,h}\cdot\bn,\xi\pir_\Sigma \,. \label{def-R4}
\end{align}
In this way, it follows that
\begin{equation}\label{eq:upper-bound-residuals}
\|\cR_{\f}\|_{\bH'} + \|\cR_{\g}\|_{\bQ'} 
\,\leq\, \Big\{ \|\cR_1\|_{\bH^1_{\Gamma_\rB}(\Omega_\rB)'} 
+ \|\cR_2\|_{\bH_{\Gamma_\rD}(\div;\Omega_\rD)'} 
+ \|\cR_3\|_{\L^2_0(\Omega)'} + \|\cR_4\|_{\H^{1/2}(\Sigma)'} \Big\} \,.
\end{equation}
Thus, our next goal is to derive appropriate upper bounds for each term on the right-hand side of \eqref{eq:upper-bound-residuals}. We begin by obtaining the estimate for $\cR_3$ (cf. \eqref{def-R3}), which follows directly from the Cauchy--Schwarz inequality.
\begin{lem}\label{lem:reliability-R3}
There holds
\begin{equation*}
\|\cR_3\|_{\L^2_0(\Omega)'} 
\,\leq\, \left\{\sum_{T\in\cT^\rB_h} \|\div(\bu_{\rB,h})\|^2_{0,T} 
+ \sum_{T\in\cT^\rD_h} \|g_\rD - \div(\bu_{\rD,h})\|^2_{0,T} \right\}^{1/2}.
\end{equation*}	
\end{lem}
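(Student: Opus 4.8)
The plan is to bound the functional $\cR_3$ defined in \eqref{def-R3} directly, with no localization tools beyond the Cauchy--Schwarz inequality, since $\cR_3$ depends only on computable discrete quantities and the data $g_\rD$. First I would fix an arbitrary $q\in \L^2_0(\Omega)\subset\L^2(\Omega)$ and split the integral over $\Omega$ into the contributions over $\Omega_\rB$ and $\Omega_\rD$, writing
\[
\cR_3(q) \,=\, \sum_{T\in\cT^\rB_h}\int_T \div(\bu_{\rB,h})\,q \,-\, \sum_{T\in\cT^\rD_h}\int_T \big(g_\rD - \div(\bu_{\rD,h})\big)\,q \,.
\]
Then I would apply the Cauchy--Schwarz inequality elementwise on each triangle, obtaining $|\int_T \div(\bu_{\rB,h})\,q| \le \|\div(\bu_{\rB,h})\|_{0,T}\,\|q\|_{0,T}$ and similarly for the Darcy terms, and sum using the discrete Cauchy--Schwarz inequality in $\ell^2$:
\[
|\cR_3(q)| \,\le\, \left\{\sum_{T\in\cT^\rB_h} \|\div(\bu_{\rB,h})\|^2_{0,T} + \sum_{T\in\cT^\rD_h} \|g_\rD - \div(\bu_{\rD,h})\|^2_{0,T}\right\}^{1/2}\!\left\{\sum_{T\in\cT_h}\|q\|^2_{0,T}\right\}^{1/2}.
\]
Finally, since $\cT_h$ is a partition of $\Omega$, the last factor equals $\|q\|_{0,\Omega}\le\|q\|_{0,\Omega}$, and dividing by $\|q\|_{0,\Omega}$ and taking the supremum over $q\neq 0$ (noting the bound is trivially valid when $q=0$) gives the claimed estimate for $\|\cR_3\|_{\L^2_0(\Omega)'}$.

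There is essentially no obstacle here: this is the easy base case among the residual estimates, and the only point requiring the slightest care is that the supremum defining $\|\cR_3\|_{\L^2_0(\Omega)'}$ is taken over mean-zero $q$, but since we bound $|\cR_3(q)|$ by a multiple of $\|q\|_{0,\Omega}$ for \emph{all} $q\in\L^2(\Omega)$, the restriction to $\L^2_0(\Omega)$ only makes the supremum smaller, so the bound holds a fortiori. I would also remark that this estimate accounts exactly for the first term $\|\div(\bu_{\rB,h})\|^2_{0,T}$ in $\Theta^2_{\rB,T}$ (cf. \eqref{eq:local-estimator-B}) and the first term $\|g_\rD - \div(\bu_{\rD,h})\|^2_{0,T}$ in $\Theta^2_{\rD,T}$ (cf. \eqref{eq:local-estimator-D}), so no further manipulation is needed to fold it into the global estimator $\Theta_\tBFD$.
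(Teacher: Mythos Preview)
Your proposal is correct and follows exactly the same approach as the paper, which simply states that the estimate ``follows directly from the Cauchy--Schwarz inequality'' without giving further details. Your write-up spells out the element-wise split and the discrete Cauchy--Schwarz step explicitly, and your remark about the supremum over $\L^2_0(\Omega)$ being no larger than over $\L^2(\Omega)$ is a harmless clarification.
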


We now adapt a result taken from \cite{bg2010} in order to obtain an upper bound for $\cR_1$ (cf. \eqref{def-R1}).
\begin{lem}
There exists $C>0$, independent of the meshsizes, such that
\begin{equation*}
\|\cR_1\|_{\bH^1_{\Gamma_\rB}(\Omega_\rB)'} 
\,\leq\, C\,\left\{ \sum_{T\in\cT^\rB_h} \wh{\Theta}^2_{\rB,T} \right\}^{1/2},
\end{equation*}
where
\begin{align*}
\wh{\Theta}^2_{\rB,T} \,:=\, & h^2_T\,\left\|\f_\rB + \bdiv(\bsi_{\rB,h}) - \bK^{-1}_\rB\bu_{\rB,h} - \tF\,|\bu_{\rB,h}|^{\rho-2}\bu_{\rB,h}\right\|^2_{0,T} \\[2ex]
& +\,\,\sum_{e\in\cE(T)\cap\cE_h(\Omega_\rB)} h_e\,\big\|\jump{\bsi_{\rB,h}\bn}\big\|^2_{0,e} 
+ \sum_{e\in\cE(T)\cap\cE_h(\Sigma)} h_e\,\left\|\bsi_{\rB,h}\bn + \lambda_h\bn \right\|^2_{0,e}\,,
\end{align*}
and $\bsi_{\rB,h}$ is given by \eqref{eq:stress-Bh-definition}.
\end{lem}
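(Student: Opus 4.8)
The plan is to estimate $\|\cR_1\|_{\bH^1_{\Gamma_\rB}(\Omega_\rB)'}$ by exploiting the Galerkin orthogonality property $\cR_{\f}(\bv_h)=0$ for all $\bv_h\in\bH_h$ (cf. \eqref{eq:R-hat-vanish-in-Hh-Qh}), in combination with the Bernardi--Raugel/Cl\'ement-type interpolation operator associated with $\bH_{h,\Gamma_\rB}(\Omega_\rB)$. First I would fix an arbitrary $\bv_\rB\in\bH^1_{\Gamma_\rB}(\Omega_\rB)$, extend it by zero in the Darcy domain (so that the extended field lies in $\bH$), and denote by $I_h\bv_\rB$ its Cl\'ement/Bernardi--Raugel quasi-interpolant in $\bH_{h,\Gamma_\rB}(\Omega_\rB)$, which satisfies the standard local approximation estimates $\|\bv_\rB - I_h\bv_\rB\|_{0,T}\le C\,h_T\,\|\bv_\rB\|_{1,\omega_T}$ and $\|\bv_\rB - I_h\bv_\rB\|_{0,e}\le C\,h_e^{1/2}\,\|\bv_\rB\|_{1,\omega_e}$ (these are exactly the properties recalled in Appendix~\ref{app:preliminaries-for-reliability} that the excerpt tells us to use). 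Since $\cR_1(\bv_\rB) = \cR_{\f}((\bv_\rB,\0)) = \cR_{\f}((\bv_\rB,\0)) - \cR_{\f}((I_h\bv_\rB,\0)) = \cR_1(\bv_\rB - I_h\bv_\rB)$, it suffices to bound $\cR_1$ tested against $\bv_\rB - I_h\bv_\rB$.

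The key step is then to rewrite $\cR_1(\bv_\rB - I_h\bv_\rB)$ in a form amenable to local estimates. Using the definition \eqref{def-R1} and $\bsi_{\rB,h}= -p_{\rB,h}\bbI + \mu\nabla\bu_{\rB,h}$, I would integrate by parts elementwise: the terms $-\mu(\nabla\bu_{\rB,h},\nabla(\bv_\rB - I_h\bv_\rB))_\rB + (\div(\bv_\rB - I_h\bv_\rB),p_h)_\rB$ combine into $\sum_{T\in\cT^\rB_h}(\bdiv(\bsi_{\rB,h}), \bv_\rB - I_h\bv_\rB)_T - \sum_{T}\pil\bsi_{\rB,h}\bn, \bv_\rB - I_h\bv_\rB\pir_{\partial T}$. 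Collecting the boundary contributions over shared interior edges produces the jump terms $\jump{\bsi_{\rB,h}\bn}$ on $\cE_h(\Omega_\rB)$; on edges lying on $\Sigma$ the boundary term pairs with $-\pil\bv_\rB\cdot\bn,\lambda_h\pir_\Sigma$ to give the residual $\bsi_{\rB,h}\bn + \lambda_h\bn$; and on edges on $\Gamma_\rB$ the contribution vanishes because $\bv_\rB - I_h\bv_\rB = \0$ there (both $\bv_\rB$ and its interpolant vanish on $\Gamma_\rB$). This leaves $\cR_1(\bv_\rB - I_h\bv_\rB)$ written as a sum of an interior volumetric residual $\f_\rB + \bdiv(\bsi_{\rB,h}) - \bK^{-1}_\rB\bu_{\rB,h} - \tF|\bu_{\rB,h}|^{\rho-2}\bu_{\rB,h}$ tested against $\bv_\rB - I_h\bv_\rB$ on each $T$, plus the two families of edge residuals tested against $\bv_\rB - I_h\bv_\rB$ on the corresponding edges.

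To finish, I would apply Cauchy--Schwarz edge- and element-wise, then insert the Cl\'ement approximation estimates to convert each $\|\bv_\rB - I_h\bv_\rB\|_{0,T}$ into $h_T\|\bv_\rB\|_{1,\omega_T}$ and each $\|\bv_\rB - I_h\bv_\rB\|_{0,e}$ into $h_e^{1/2}\|\bv_\rB\|_{1,\omega_e}$, so that the factors of $h_T^2$ and $h_e$ appearing in $\wh{\Theta}^2_{\rB,T}$ emerge naturally. A final discrete Cauchy--Schwarz over the triangulation, together with the finite-overlap property of the patches $\omega_T$, $\omega_e$ (bounded by shape-regularity), collapses the sum over patches back to $C\|\bv_\rB\|_{1,\Omega_\rB}$ and yields $|\cR_1(\bv_\rB)|\le C\big(\sum_{T}\wh{\Theta}^2_{\rB,T}\big)^{1/2}\|\bv_\rB\|_{1,\Omega_\rB}$; dividing by $\|\bv_\rB\|_{1,\Omega_\rB}$ and taking the supremum gives the claim. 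The main obstacle I anticipate is bookkeeping rather than conceptual: one must be careful that the Bernardi--Raugel interpolant preserves the homogeneous boundary condition on $\Gamma_\rB$ so that those edge terms genuinely drop out, and that the nonlinear Forchheimer term $\tF|\bu_{\rB,h}|^{\rho-2}\bu_{\rB,h}$ is simply absorbed into the volumetric residual (it is a computable $L^2(T)$ function since $\bu_{\rB,h}$ is piecewise polynomial), requiring no special treatment at this stage — the $\rho$-dependence has already been handled in Lemma~\ref{lem:preliminary-a-priori-bound}.
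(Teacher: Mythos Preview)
Your proposal is correct and follows essentially the same approach as the paper: the paper's proof simply refers to \cite[Lemma~3.4]{bg2010} with the obvious substitutions (replacing $\f_1$ there by $\f_\rB - \bK^{-1}_\rB\bu_{\rB,h} - \tF|\bu_{\rB,h}|^{\rho-2}\bu_{\rB,h}$, etc.) and invokes the Cl\'ement interpolator $\bcI^\rB_h:\bH^1(\Omega_\rB)\to \bX_h(\Omega_\rB)$ of Lemma~\ref{lem:clement}, which is exactly the elementwise integration-by-parts plus Cl\'ement-approximation argument you outline. Your write-up is in fact more detailed than the paper's own proof, and the bookkeeping concerns you flag (boundary condition on $\Gamma_\rB$, treatment of the Forchheimer term as a computable $L^2$ residual) are handled precisely as you anticipate.
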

\begin{proof}
We proceed similarly as in the proof of \cite[Lemma 3.4]{bg2010}, by replacing $\f_1, \bsi_{1,h}, \bu_{1,h}, \Omega_1$, 
and $\Gamma_2$ in there by $\f_\rB - \bK^{-1}_\rB\bu_{\rB,h} - \tF\,|\bu_{\rB,h}|^{\rho-2}\bu_{\rB,h}$, 
$\bsi_{\rB,h}$, $\bu_{\rB,h}$, $\Omega_\rB$, and $\Sigma$, respectively, 
and employing the local approximation properties of the Cl\'ement interpolation operator $\bcI^\rB_h:\bH^1(\Omega_\rB)\to \bX_h(\Omega_\rB)$ provided by Lemma \ref{lem:clement}.
We omit further details.
\end{proof}

Next, we derive the upper bound for $\cR_4$ (cf. \eqref{def-R4}), the functional acting on the interface $\Sigma$.
\begin{lem}\label{lem:reliability-R4}
There exists $C>0$, independent of the meshsizes, such that
\begin{equation*}
\|\cR_4\|_{\H^{1/2}(\Sigma)'} 
\,\leq\, C\,\left\{\sum_{e\in\cE_h(\Sigma)} h_e\,\|\bu_{\rB,h}\cdot\bn - \bu_{\rD,h}\cdot\bn\|^2_{0,e} \right\}^{1/2}.
\end{equation*}
\end{lem}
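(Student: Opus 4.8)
The functional $\cR_4$ acts on $\H^{1/2}(\Sigma)$ via $\cR_4(\xi) = -\langle \bu_{\rB,h}\cdot\bn - \bu_{\rD,h}\cdot\bn, \xi\rangle_\Sigma$, so its norm is a negative Sobolev norm of the jump of normal velocities on the interface. The plan is to exploit that $\cR_4$ vanishes on the discrete trace space: since taking $\bv_h = \0$ and $(q_h,\xi_h) = (0,\xi_h)$ with $\xi_h\in\Lambda_h(\Sigma)$ in the second equation of \eqref{eq:discrete-mixed-formulation} (together with the decomposition $\cR_{\g} = \cR_3 + \cR_4$ and the fact that $\cR_3$ is $\L^2(\Omega)$-orthogonal to the pressure component) gives $\cR_4(\xi_h) = 0$ for all $\xi_h\in\Lambda_h(\Sigma)$. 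Hence for arbitrary $\xi\in\H^{1/2}(\Sigma)$ and any $\xi_h\in\Lambda_h(\Sigma)$,
\[
\cR_4(\xi) \,=\, \cR_4(\xi - \xi_h) \,=\, -\langle \bu_{\rB,h}\cdot\bn - \bu_{\rD,h}\cdot\bn, \xi - \xi_h\rangle_\Sigma \,.
\]

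First I would bound the duality pairing on each edge $e\in\Sigma_h$ by the $\L^2(e)$ inner product, using that the quantity $\bu_{\rB,h}\cdot\bn - \bu_{\rD,h}\cdot\bn$ is piecewise smooth (indeed polynomial) on $\Sigma_h$; Cauchy--Schwarz on each edge gives $|\cR_4(\xi)| \le \sum_{e\in\cE_h(\Sigma)} \|\bu_{\rB,h}\cdot\bn - \bu_{\rD,h}\cdot\bn\|_{0,e}\,\|\xi - \xi_h\|_{0,e}$. Then I would choose $\xi_h$ to be a suitable quasi-interpolant of $\xi$ in $\Lambda_h(\Sigma)$ (a one-dimensional Clément- or Scott--Zhang-type operator on the curve $\Sigma$) enjoying the local approximation estimate $\|\xi - \xi_h\|_{0,e} \le C\, h_e^{1/2}\,\|\xi\|_{1/2;\tilde e}$, where $\tilde e$ is a neighborhood of $e$; after multiplying and dividing by $h_e^{1/2}$, applying the discrete Cauchy--Schwarz inequality in the edge index, and using the finite-overlap property of the patches $\tilde e$ to collapse $\sum_e \|\xi\|^2_{1/2;\tilde e} \le C\,\|\xi\|^2_{1/2;\Sigma}$, one obtains
\[
|\cR_4(\xi)| \,\le\, C\,\left\{\sum_{e\in\cE_h(\Sigma)} h_e\,\|\bu_{\rB,h}\cdot\bn - \bu_{\rD,h}\cdot\bn\|^2_{0,e}\right\}^{1/2} \|\xi\|_{1/2;\Sigma}\,,
\]
and dividing by $\|\xi\|_{1/2;\Sigma}$ and taking the supremum yields the claim.

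The main obstacle is the rigorous handling of the fractional-order interpolation estimate $\|\xi - \xi_h\|_{0,e} \le C\,h_e^{1/2}\,\|\xi\|_{1/2;\tilde e}$ on the interface: unlike the interior Clément estimates quoted from Lemma \ref{lem:clement}, this is a trace-space estimate on a one-dimensional manifold, and one must be careful that the $\H^{1/2}(\Sigma)$ norm is the correct (intrinsic or quotient) norm and that the local patches behave well near the endpoints of $\Sigma$. In practice this is standard and parallels the treatment in \cite{bg2010} and \cite{cgos2021}; I would simply invoke the known approximation properties of the interpolation operator onto $\Lambda_h(\Sigma)$ — mirroring the argument used there for the analogous interface term — and omit the routine details, since the structure of $\cR_4$ here is identical to its counterpart in those references.
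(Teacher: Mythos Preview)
Your approach is correct and essentially identical to the paper's: both use the Galerkin orthogonality $\cR_4(\xi_h)=0$ for $\xi_h\in\Lambda_h(\Sigma)$, an edgewise Cauchy--Schwarz bound, and then an $\H^{1/2}$-approximation estimate on the interface, deferring the latter to the literature. The paper's proof is in fact even terser than yours --- it simply cites \cite[eq.~(3.23)]{bg2010}, \cite[Lemma~3.2]{gos2011}, and \cite[Theorem~2]{carstensen1997}, the last of which supplies the weighted bound $\sum_{e} h_e^{-1}\|\xi-\xi_h\|_{0,e}^2 \le C\,\|\xi\|_{1/2,\Sigma}^2$ directly (under the hypothesis, noted in the paper but not in your sketch, that $\Sigma_h$ and $\Sigma_{2h}$ are of bounded variation), thereby sidestepping the localized $\H^{1/2}$-norm step you flagged as the main obstacle.
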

\begin{proof}
It follows similarly to the proof of \cite[eq. (3.23)]{bg2010} (see also \cite[eq. (3.8) in Lemma 3.2]{gos2011}), by replacing $\bu_{1,h}, \bu_{2,h}$, and $\Gamma_2$ therein with $\bu_{\rB,h}, \bu_{\rD,h}$, and $\Sigma$, respectively, employing \cite[Theorem 2]{carstensen1997}, and recalling that both $\Sigma_h$ and $\Sigma_{2h}$ are of bounded variation. Further details are omitted.
%
\end{proof}

Finally, we focus on deriving the upper bound for $\cR_2$, for which, given $\bv_\rD\in\bH_{\Gamma_\rD}(\div;\Omega_\rD)$, we consider its Helmholtz decomposition provided by Lemma~\ref{lem:Helmholtz-decomposition}. 
More precisely, we let $\bw_\rD\in\bH^1(\Omega_\rD)$ and $\beta_\rD\in\H^{1}_{\Gamma_\rD}(\Omega_\rD)$ (cf. \eqref{eq:H1-GammaD-space}) be such that
\begin{equation}\label{eq:Helmholtz-stability-estimate}
\bv_\rD = \bw_\rD + \bcurl(\beta_\rD) \qin \Omega_\rD \qan
\|\bw_\rD\|_{1,\Omega_\rD} + \|\beta_\rD\|_{1,\Omega_\rD} 
\,\leq\, \Chel\,\|\bv_\rD\|_{\div;\Omega_\rD}.
\end{equation}
In turn, similarly to \cite[Section 3.2]{agr2016}, we consider the finite element subspace of $\H^{1}_{\Gamma_\rD}(\Omega_\rD)$ given by
\begin{equation}\label{X-h-Gamma-D}
\X_{h,\Gamma_\rD} := \Big\{v\in\cC(\overline{\Omega}_\rD):\quad v|_T\in \rP_1(T) \quad \forall \, T\in\cT^\rD_h,\quad v = 0 \qon \Gamma_\rD \Big\},
\end{equation}
and introduce the Cl\'ement interpolator $\cI^\rD_h:\H^{1}_{\Gamma_\rD}(\Omega_\rD)\to \X_{h,\Gamma_\rD}$ (cf. \eqref{eq:Xh-Clement-space}).
In addition, recalling the Raviart--Thomas interpolator $\Pi_h:\bH^1(\Omega_\rD)\to \bH_{h,\Gamma_{\rD}}(\Omega_\rD)$ introduced in Appendix \ref{app:preliminaries-for-reliability}, we define 
\begin{equation}\label{eq:discrete-Helmholtz-decomposition}
\bv_{\rD,h} := \Pi_h(\bw_\rD) + \bcurl(\cI^\rD_h(\beta_\rD)) \in\bH_{h,\Gamma_\rD}(\Omega_\rD) \,,
\end{equation}
which can be interpreted as a discrete Helmholtz decomposition of $\bv_{\rD,h}$. Then, by testing \eqref{eq:R-hat-vanish-in-Hh-Qh} with $\bv_h=(\0,\bv_{\rD,h})$, we obtain $\cR_2(\bv_{\rD,h}) = 0$.
Denoting
\begin{equation*}
\widehat{\bw}_\rD := \bw_\rD - \Pi_h(\bw_\rD) \qan 
\widehat{\beta}_\rD := \beta_\rD - \cI^\rD_h(\beta_\rD) \,,
\end{equation*}
it follows from \eqref{eq:Helmholtz-stability-estimate} and \eqref{eq:discrete-Helmholtz-decomposition}, that
\begin{equation}\label{eq:R2-decomposition}
\cR_2(\bv_\rD) = \cR_2(\bv_\rD - \bv_{\rD,h}) 
\,=\, \cR_2(\widehat{\bw}_\rD) + \cR_2(\bcurl(\widehat{\beta}_\rD)) \,,
\end{equation}
where, according to the definition of $\cR_2$ (cf. \eqref{def-R2}), we have
\begin{equation}\label{eq:R2-hat-wD}
\cR_2(\wh{\bw}_\rD) 
\,=\, \left(\f_\rD - \bK^{-1}_\rD\bu_{\rD,h}, \wh{\bw}_\rD\right)_\rD 
+ (\div(\wh{\bw}_\rD),p_h)_\rD + \pil\wh{\bw}_\rD\cdot\bn,\lambda_h\pir_\Sigma 
\end{equation}
and
\begin{equation}\label{eq:R2-curl-hat-betaD}
\cR_2(\bcurl(\wh{\beta}_\rD)) \,=\, 
\left(\f_\rD - \bK^{-1}_\rD\bu_{\rD,h}, \bcurl(\wh{\beta}_\rD)\right)_\rD 
+ \pil\bcurl(\wh{\beta}_\rD)\cdot\bn,\lambda_h\pir_\Sigma \,.
\end{equation}
The following lemma establishes the residual upper bound for $\|\cR_2\|_{\bH_{\Gamma_\rD}(\div;\Omega_\rD)'}$.
\begin{lem}\label{lem:reliability-R2}
Assume that there exists a convex domain $\Xi$ such that $\Omega_\rD\subseteq \Xi$ and $\Gamma_\rD\subseteq\partial \Xi$. 
Assume further that $\f_\rD\in\bH^{1}(T)$ for each $T\in \cT^\rD_h$.
Then there exists $C>0$, independent of the meshsizes, such that
\begin{equation}\label{eq:R2-bound-ThetaD}
\|\cR_2\|_{\bH_{\Gamma_{\rD}}(\div;\Omega_\rD)'} 
\,\leq\, C\,\left\{ \sum_{T\in\cT^\rD_h} \wh{\Theta}^2_{\rD,T} \right\}^{1/2} \,,
\end{equation}
where 
\begin{equation}\label{eq:local-hat-ThetaD}
\begin{array}{l}
\ds \wh{\Theta}^2_{\rD,T} \,:=\, 
h^2_T \left\|\f_\rD - \bK^{-1}_\rD\bu_{\rD,h} \right\|^2_{0,T}
+  h^2_T\left\|\rot\left(\f_\rD - \bK^{-1}_\rD\bu_{\rD,h}\right)\right\|^2_{0,T} \\ [4ex]
\ds\quad +\,\sum_{e\in \cE(T)\cap \cE_h(\Omega_\rD)} h_e\,\left\|\jump{\left(\f_\rD - \bK^{-1}_\rD\bu_{\rD,h}\right)\cdot\bt}\right\|^2_{0,e} \\[4ex]
\ds\quad +\, \sum_{e\in \cE(T)\cap \cE_h(\Sigma)} \left\{ h_e\,\left\| (\f_\rD - \bK^{-1}_\rD\bu_{\rD,h})\cdot\bt - \frac{d\,\lambda_h}{d\,\bt} \right\|^2_{0,e} 
+ h_e\,\|\lambda_h - p_h\|^2_{0,e} \right\} \,.
\end{array}
\end{equation}
\end{lem}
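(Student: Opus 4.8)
The plan is to estimate the two pieces $\cR_2(\wh\bw_\rD)$ and $\cR_2(\bcurl(\wh\beta_\rD))$ appearing in \eqref{eq:R2-decomposition} separately, to bound each of them by $\big(\sum_{T\in\cT^\rD_h}\wh\Theta^2_{\rD,T}\big)^{1/2}$ times $\|\bv_\rD\|_{\div;\Omega_\rD}$, and then to take the supremum over $\bv_\rD\in\bH_{\Gamma_\rD}(\div;\Omega_\rD)\setminus\{\0\}$. The only place where the convexity hypothesis on $\Xi$ is used is through Lemma~\ref{lem:Helmholtz-decomposition}, which provides the $\bH^1$-regular stable Helmholtz decomposition \eqref{eq:Helmholtz-stability-estimate}; the rest follows the pattern of \cite{bg2010} and \cite{agr2016}.

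For $\cR_2(\wh\bw_\rD)$, I would start from \eqref{eq:R2-hat-wD} and use two standard properties of the Raviart--Thomas interpolator $\Pi_h$. First, $\div(\Pi_h\bw_\rD)$ equals the $\L^2(\Omega_\rD)$-projection of $\div(\bw_\rD)$ onto the piecewise constants, so $(\div(\wh\bw_\rD),p_h)_\rD=0$ because $p_h|_T\in\rP_0(T)$. Second, the normal component $\wh\bw_\rD\cdot\bn$ has vanishing $\rP_0(e)$-moment on every edge $e$ of $\cT^\rD_h$; since $p_{\rD,h}|_e\in\rP_0(e)$, this lets me rewrite the interface term and obtain $\cR_2(\wh\bw_\rD)=\big(\f_\rD-\bK^{-1}_\rD\bu_{\rD,h},\wh\bw_\rD\big)_\rD+\pil\wh\bw_\rD\cdot\bn,\lambda_h-p_h\pir_\Sigma$. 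The Cauchy--Schwarz inequality together with the local $\L^2(T)$ and $\L^2(e)$ approximation estimates for $\Pi_h$ (Appendix~\ref{app:preliminaries-for-reliability}) then bound this by $\big(\sum_{T}h^2_T\|\f_\rD-\bK^{-1}_\rD\bu_{\rD,h}\|^2_{0,T}+\sum_{e\in\cE_h(\Sigma)}h_e\|\lambda_h-p_h\|^2_{0,e}\big)^{1/2}\,\|\bw_\rD\|_{1,\Omega_\rD}$.

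For $\cR_2(\bcurl(\wh\beta_\rD))$, I would start from \eqref{eq:R2-curl-hat-betaD} and integrate by parts elementwise via the 2D identity $\big(\f_\rD-\bK^{-1}_\rD\bu_{\rD,h},\bcurl(\wh\beta_\rD)\big)_T=\big(\rot(\f_\rD-\bK^{-1}_\rD\bu_{\rD,h}),\wh\beta_\rD\big)_T+\pil(\f_\rD-\bK^{-1}_\rD\bu_{\rD,h})\cdot\bt,\wh\beta_\rD\pir_{\partial T}$, which is meaningful because $\f_\rD\in\bH^1(T)$ and $\bK^{-1}_\rD\bu_{\rD,h}$ is piecewise smooth. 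Summing over $T\in\cT^\rD_h$ and using the continuity of $\wh\beta_\rD$, the interior-edge terms become tangential jumps $\jump{(\f_\rD-\bK^{-1}_\rD\bu_{\rD,h})\cdot\bt}$, while the edges lying on $\Gamma_\rD$ drop out since $\wh\beta_\rD=0$ there. For the remaining term $\pil\bcurl(\wh\beta_\rD)\cdot\bn,\lambda_h\pir_\Sigma$, I would use $\bcurl(\wh\beta_\rD)\cdot\bn=\frac{d\,\wh\beta_\rD}{d\,\bt}$ on $\Sigma$ and integrate by parts along the curve $\Sigma$; the endpoints of $\Sigma$ lie on $\Gamma_\rD$, where $\wh\beta_\rD=0$, so no endpoint terms survive, and this term equals $-\pil\wh\beta_\rD,\frac{d\,\lambda_h}{d\,\bt}\pir_\Sigma$, which combines with the $\Sigma$-edge boundary terms produced above into $\sum_{e\in\cE_h(\Sigma)}\pil(\f_\rD-\bK^{-1}_\rD\bu_{\rD,h})\cdot\bt-\frac{d\,\lambda_h}{d\,\bt},\wh\beta_\rD\pir_e$. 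Then Cauchy--Schwarz, the local $\L^2(T)$ and $\L^2(e)$ approximation properties of the Cl\'ement operator $\cI^\rD_h$, and the bounded overlap of the Cl\'ement patches bound $\cR_2(\bcurl(\wh\beta_\rD))$ by $\big(\sum_{T}h^2_T\|\rot(\f_\rD-\bK^{-1}_\rD\bu_{\rD,h})\|^2_{0,T}+\sum_{e\in\cE_h(\Omega_\rD)}h_e\|\jump{(\f_\rD-\bK^{-1}_\rD\bu_{\rD,h})\cdot\bt}\|^2_{0,e}+\sum_{e\in\cE_h(\Sigma)}h_e\|(\f_\rD-\bK^{-1}_\rD\bu_{\rD,h})\cdot\bt-\frac{d\,\lambda_h}{d\,\bt}\|^2_{0,e}\big)^{1/2}\,\|\beta_\rD\|_{1,\Omega_\rD}$.

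Adding the two bounds, invoking $\|\bw_\rD\|_{1,\Omega_\rD}+\|\beta_\rD\|_{1,\Omega_\rD}\le\Chel\,\|\bv_\rD\|_{\div;\Omega_\rD}$ from \eqref{eq:Helmholtz-stability-estimate}, and reorganizing the element and edge sums into $\sum_{T\in\cT^\rD_h}\wh\Theta^2_{\rD,T}$ gives \eqref{eq:R2-bound-ThetaD}. I expect the main obstacle to be the bookkeeping of the boundary and interface contributions in the curl part: one must verify that $\wh\beta_\rD$ vanishes on all of $\Gamma_\rD$, hence at the endpoints of $\Sigma$, so that both the elementwise integrations by parts and the integration by parts along $\Sigma$ close without stray boundary terms, and one must keep track of the orientations of $\bn$ and $\bt$ on $\Sigma$ so that these two integrations by parts recombine into precisely the tangential residual $(\f_\rD-\bK^{-1}_\rD\bu_{\rD,h})\cdot\bt-\frac{d\,\lambda_h}{d\,\bt}$ entering $\wh\Theta^2_{\rD,T}$.
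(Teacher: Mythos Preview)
Your proposal is correct and follows essentially the same route as the paper's proof: split $\cR_2(\bv_\rD)$ via the discrete Helmholtz decomposition \eqref{eq:R2-decomposition}, bound $\cR_2(\wh\bw_\rD)$ using the moment property of $\Pi_h$ together with its local approximation estimates, bound $\cR_2(\bcurl(\wh\beta_\rD))$ by elementwise integration by parts plus integration by parts along $\Sigma$ and the Cl\'ement estimates, and close with \eqref{eq:Helmholtz-stability-estimate}. The only cosmetic difference is that you kill $(\div(\wh\bw_\rD),p_h)_\rD$ directly via the commuting property $\div\circ\Pi_h=\cP_h^\rD\circ\div$, whereas the paper reaches the same conclusion by local integration by parts; both are valid and lead to the identical residual expression.
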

\begin{proof}
The proof proceeds similarly as in \cite[Lemma 3.9]{gos2011} but now considering $\f_\rD - \bK^{-1}_\rD\bu_{\rD,h}$ instead of $\bK^{-1}\bu_{\rD,h}$. 
We provide details in what follows just for sake of completeness.
In fact, according to \eqref{eq:R2-decomposition}, we begin by estimating $\cR_2(\wh{\bw}_\rD)$.
Let us first observe that, for each $e\in \cE_h(\Sigma)$, the identity \eqref{eq:RT-interpolation}
and the fact that $p_{\rD,h}|_e\in \rP_0(e)$, yield $\ds\int_e (\wh{\bw}_\rD\cdot\bn)\,p_{\rD,h} = 0$.
Hence, locally integrating by parts the second term in \eqref{eq:R2-hat-wD},
we readily obtain
\begin{equation*}
\cR_2(\wh{\bw}_\rD) \,=\,
\int_\Omega \big(\f_\rD - \bK^{-1}_\rD\bu_{\rD,h}\big)\cdot \wh{\bw}_\rD 
\,+\, \sum_{e\in \cE_h(\Sigma)} \int_e \left(\lambda_h - p_{\rD,h}\right)\,\wh{\bw}_\rD\cdot\bn \,.
\end{equation*}
Thus, applying the Cauchy--Schwarz inequality along with the approximation properties of 
$\Pi_h$ (cf. \eqref{eq:property-Pi-1}--\eqref{eq:property-Pi-2} in Lemma \ref{lem:Phih-properties}), we find that
\begin{equation}\label{eq:reliability-estimate-R1-hat-zeta}
\big| \cR_2(\wh{\bw}_\rD) \big| \,\leq\, \wh{C}_1\,\Bigg\{ \sum_{T\in \cT^\rD_h} h^2_T\,\left\|\f_\rD - \bK^{-1}_\rD\bu_{\rD,h}\right\|^2_{0,T} \,+\, \sum_{e\in \cE_h(\Sigma)} h_e\,\|\lambda_h - p_{\rD,h}\|^2_{0,e} \Bigg\}^{1/2} \|\bw_\rD\|_{1,\Omega_\rD} \,.
\end{equation}

Next, we estimate $\cR_2(\bcurl(\wh{\beta}_\rD))$ (cf. \eqref{eq:R2-curl-hat-betaD}).
In fact, regarding its second term, noting that $\bcurl(\wh{\beta}_\rD)\cdot\bn = \nabla \wh{\beta}_\rD\cdot\bt$ on $\Sigma$, integrating by parts on $\Sigma$ (cf. \cite[Lemma 3.5, eq. (3.34)]{dgm2015}), and observing that $\frac{d\,\lambda_h}{d\,\bt}\in \L^2(\Sigma)$, we obtain
\begin{equation}\label{eq:identity-ubcurl-Gamma}
\pil\bcurl(\wh{\beta}_\rD)\cdot\bn,\lambda_h\pir_\Sigma 
\,=\, -\,\pil \frac{d\,\lambda_h}{d\,\bt}, \wh{\beta}_\rD \pir_\Sigma \,.
\end{equation}
In turn, locally integrating by parts the first term of $\cR_2(\bcurl(\wh{\beta}_\rD))$ and using the fact that $\wh{\beta}_\rD = 0$ on $\Gamma_{\rD}$ (cf. \eqref{eq:H1-GammaD-space} and \eqref{X-h-Gamma-D}), we get
\begin{equation*}
\begin{array}{l}
\ds \left(\f_\rD - \bK^{-1}_\rD\bu_{\rD,h}, \bcurl(\wh{\beta}_\rD)\right)_\rD
= \sum_{T\in \cT^\rD_h} \int_T \rot\left( \f_\rD - \bK^{-1}_\rD\bu_{\rD,h} \right)\,\wh{\beta}_\rD \\[3ex]
\ds\quad -\, \sum_{e\in \cE_h(\Omega_\rD)} \int_e \jump{\big( \f_\rD - \bK^{-1}_\rD\bu_{\rD,h} \big)\cdot\bt}\,\wh{\beta}_\rD
- \sum_{e\in \cE_h(\Sigma)} \int_e \big( \f_\rD - \bK^{-1}_\rD\bu_{\rD,h} \big)\cdot\bt\,\wh{\beta}_\rD \,,
\end{array}
\end{equation*}
which together with \eqref{eq:identity-ubcurl-Gamma}, the Cauchy--Schwarz inequality, and the approximation properties of $\cI^\rD_h$ (cf. Lemma \ref{lem:clement}), implies
\begin{equation}\label{eq:reliability-estimate-R1-curl-hat-xi}
\begin{array}{l}
\ds \big| \cR_2(\bcurl(\wh{\beta}_\rD)) \big| 
\,\leq\, \wh{C}_2\,\Bigg\{ \sum_{T\in \cT^\rD_h} h^2_T\,\left\|\rot\left(\f_\rD - \bK^{-1}_\rD\bu_{\rD,h}\right)\right\|^2_{0,T} \\[3ex]
\ds\quad +\, \sum_{e\in \cE_h(\Omega_\rD)} h_e\,\left\|\jump{\big( \f_\rD - \bK^{-1}_\rD\bu_{\rD,h} \big)\cdot\bt}\right\|^2_{0,e} \\[3ex]
\ds\quad +\, \sum_{e\in \cE_h(\Sigma)} h_e\,\left\|\big( \f_\rD - \bK^{-1}_\rD\bu_{\rD,h} \big)\cdot\bt - \frac{d\,\lambda_h}{d\,\bt}\right\|^2_{0,e} \Bigg\}^{1/2} 
\|\beta_\rD\|_{1,\Omega_\rD} \,.
\end{array}
\end{equation}
Finally, it is easy to see that \eqref{eq:R2-decomposition}, \eqref{eq:reliability-estimate-R1-hat-zeta}, \eqref{eq:reliability-estimate-R1-curl-hat-xi}, the stability estimate \eqref{eq:Helmholtz-stability-estimate}, and the definition of the local estimator $\wh{\Theta}^2_{\rD,T}$ (cf. \eqref{eq:local-hat-ThetaD}) give \eqref{eq:R2-bound-ThetaD}, which ends the proof.
\end{proof}

We end this section by stressing that the reliability estimate 
\eqref{eq:reliability-estimate} (cf. Theorem \ref{thm:reliability}) is a 
straightforward consequence of the auxiliar Lemma \ref{lem:preliminary-a-priori-bound} 
and Lemmas \ref{lem:reliability-R3}--\ref{lem:reliability-R2}, and the definition 
of the global estimator $\Theta_{\tBFD}$ (cf. \eqref{eq:global-estimator}).


\subsection{Efficiency of the \textit{a posteriori} error estimator}\label{sec:efficiency}

We now aim to establish the efficiency estimate of $\Theta_{\tBFD}$ (cf. \eqref{eq:global-estimator}).
For this purpose, we will make extensive use of the notation and results from Appendix \ref{app:preliminaries-for-efficiency}, as well as the original system of equations given by 
\eqref{eq:BF-model}, \eqref{eq:Darcy-model}, and \eqref{eq:transmission-condition}.
These equations can be derived from the mixed continuous formulation \eqref{eq:mixed-variational-formulation} by choosing 
suitable test functions and integrating by parts backwardly the corresponding equations.
The following theorem is the main result of this section.
\begin{thm}\label{th:efficiency} 
Assume, for simplicity, that $\f_\rB$ and $\f_\rD$ are piecewise polynomials.
Then, there exists a positive constant $C_{\tt eff}$, independent of $h$, such that 
\begin{equation}\label{eq:global-efficiency}
C_{\tt eff}\,\Theta_{\tBFD} \,+\, {\tt h. o. t.} 
\,\leq\, \|\bu - \bu_h\|_{\bH} \,+\, \|(p,\lambda) - (p_h,\lambda_h)\|_{\bQ} \,,
\end{equation}
where ${\tt h. o. t.}$ stands for one or several terms of higher order.
\end{thm}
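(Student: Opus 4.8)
The plan is to establish the efficiency of each local indicator $\Theta_{\rB,T}$ and $\Theta_{\rD,T}$ separately by bounding every residual term appearing in \eqref{eq:local-estimator-B} and \eqref{eq:local-estimator-D} by the corresponding local (or patch-wise) contribution to the error $\|\bu-\bu_h\|_{\bH}+\|(p,\lambda)-(p_h,\lambda_h)\|_{\bQ}$, and then sum over all elements, using the fact that only a uniformly bounded number of elements share an edge or a vertex. The key tool throughout is the localization technique via bubble functions together with the inverse and trace inequalities collected in Appendix \ref{app:preliminaries-for-efficiency}; the original strong-form equations \eqref{eq:BF-model}, \eqref{eq:Darcy-model}, \eqref{eq:transmission-condition} are used to rewrite each residual as a difference between a discrete quantity and the exact solution.

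\textbf{Bounding the volumetric and interior-edge terms.} First I would treat the ``easy'' terms that involve no bubble functions: $\|\div(\bu_{\rB,h})\|_{0,T}=\|\div(\bu_{\rB,h}-\bu_\rB)\|_{0,T}$ (since $\div\bu_\rB=0$) is directly controlled by $\|\bu_\rB-\bu_{\rB,h}\|_{1,\Omega_\rB}$, and similarly $\|g_\rD-\div(\bu_{\rD,h})\|_{0,T}=\|\div(\bu_\rD-\bu_{\rD,h})\|_{0,T}\le\|\bu_\rD-\bu_{\rD,h}\|_{\div;\Omega_\rD}$. For the residual-type volume terms multiplied by $h_T^2$ — namely $h_T\|\f_\rB+\bdiv(\bsi_{\rB,h})-\bK^{-1}_\rB\bu_{\rB,h}-\tF|\bu_{\rB,h}|^{\rho-2}\bu_{\rB,h}\|_{0,T}$ and $h_T\|\f_\rD-\bK^{-1}_\rD\bu_{\rD,h}\|_{0,T}$ — I would multiply the residual by an element bubble function, use the equilibrium equations to replace $\f_\rB$ and $\f_\rD$, integrate by parts, and apply the standard bubble-function estimates to obtain bounds in terms of $\|\nabla(\bu_\rB-\bu_{\rB,h})\|_{0,T}$, $\|p_\rB-p_{\rB,h}\|_{0,T}$, $\|\bu_\rD-\bu_{\rD,h}\|_{0,T}$, plus data-oscillation higher-order terms; here the nonlinear Forchheimer term is handled exactly as in \cite{ce2024} or \cite{cgg2024-pp} by exploiting the local Lipschitz bound $\big||\bx|^{\rho-2}\bx-|\by|^{\rho-2}\by\big|\lesssim(|\bx|^{\rho-2}+|\by|^{\rho-2})|\bx-\by|$ together with the $L^\infty$ a priori bounds on $\bu_{\rB,h}$. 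The rotational term $h_T\|\rot(\f_\rD-\bK^{-1}_\rD\bu_{\rD,h})\|_{0,T}$ uses $\rot(\f_\rD-\bK^{-1}_\rD\bu_\rD)=\rot(\nabla p_\rD)=0$, so it equals $h_T\|\rot(\bK^{-1}_\rD(\bu_\rD-\bu_{\rD,h}))\|_{0,T}$ and is bounded, via a curl-bubble localization, by $\|\bu_\rD-\bu_{\rD,h}\|_{0,T}$. The interior-edge jump terms $h_e\|\jump{\bsi_{\rB,h}\bn}\|_{0,e}$ and $h_e\|\jump{(\f_\rD-\bK^{-1}_\rD\bu_{\rD,h})\cdot\bt}\|_{0,e}$ are treated with edge-bubble functions extended into the two adjacent elements, again using that the corresponding exact jumps vanish (continuity of $\bsi_\rB\bn$ across interior edges, and $\jump{(\f_\rD-\bK^{-1}_\rD\bu_\rD)\cdot\bt}=\jump{\nabla p_\rD\cdot\bt}=0$).

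\textbf{Bounding the interface terms and assembling.} The terms supported on $\Sigma$ require the transmission conditions: $h_e\|\lambda_h-p_{\rD,h}\|_{0,e}$ uses $\lambda=p_\rD|_\Sigma$, so it is bounded by $h_e^{1/2}\|\lambda-\lambda_h\|_{0,e}+h_e\|p_\rD-p_{\rD,h}\|_{0,e}$ and thence by the error plus higher-order oscillation; $h_e\|\bu_{\rB,h}\cdot\bn-\bu_{\rD,h}\cdot\bn\|_{0,e}$ uses $\bu_\rB\cdot\bn=\bu_\rD\cdot\bn$ on $\Sigma$ and a trace inequality to get a bound in terms of $\|\bu_\rB-\bu_{\rB,h}\|_{1,\Omega_\rB}+\|\bu_\rD-\bu_{\rD,h}\|_{\div;\Omega_\rD}$; the terms $h_e\|\bsi_{\rB,h}\bn+\lambda_h\bn\|_{0,e}$ and $h_e\|(\f_\rD-\bK^{-1}_\rD\bu_{\rD,h})\cdot\bt-\tfrac{d\lambda_h}{d\bt}\|_{0,e}$ combine the momentum transmission condition $\bsi_\rB\bn=-p_\rD\bn$ with $\lambda=p_\rD|_\Sigma$ and with $(\f_\rD-\bK^{-1}_\rD\bu_\rD)\cdot\bt=\nabla p_\rD\cdot\bt=\tfrac{d\lambda}{d\bt}$, and are localized with edge bubbles on the interface following \cite{gos2011} and \cite{bg2010}. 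The main obstacle is the careful handling of the nonlinear Forchheimer contribution in the localized estimates — ensuring the local $L^\infty$ bounds on $\bu_{\rB,h}$ and the exponent bookkeeping for $\rho\in[3,4]$ yield genuinely higher-order remainder terms — and, secondarily, the interface terms, where one must pass between $H^{1/2}(\Sigma)$-type norms and edgewise $L^2$ norms without losing the mesh-size scaling; once these are in place, squaring all the local bounds, summing over $\cT^\rB_h$ and $\cT^\rD_h$, and invoking the finite-overlap property of edge/vertex patches together with $\sum_T h_T^2\|\cdot\|^2_{0,T}$-type data oscillations being higher order, delivers \eqref{eq:global-efficiency}.
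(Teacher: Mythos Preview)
Your overall strategy---localize each residual via bubble functions, invoke the strong equations \eqref{eq:BF-model}--\eqref{eq:transmission-condition} to rewrite residuals as errors, and sum using finite overlap---matches the paper's exactly, and your treatment of the divergence, rotation, jump, and interface terms is essentially what the paper does in Lemmas \ref{lem:efficiency-1}--\ref{lem:efficiency-5}.

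There is, however, a genuine gap in your handling of the Forchheimer nonlinearity. You propose to control $\big\||\bu_\rB|^{\rho-2}\bu_\rB-|\bu_{\rB,h}|^{\rho-2}\bu_{\rB,h}\big\|_{0,T}$ using the pointwise Lipschitz bound together with ``the $L^\infty$ a priori bounds on $\bu_{\rB,h}$''. No such $L^\infty$ bounds are available: in two dimensions $\bH^1(\Omega_\rB)$ does not embed into $\bL^\infty(\Omega_\rB)$, and neither the continuous estimate \eqref{eq:a-priori-bound-u} nor the discrete one \eqref{eq:a-priori-bound-uh} gives anything stronger than an $\bH^1$ bound. What the paper (and \cite{ce2024}) actually does is keep the term $\big\||\bu_\rB|^{\rho-2}\bu_\rB-|\bu_{\rB,h}|^{\rho-2}\bu_{\rB,h}\big\|_{0,T}^2$ in the local estimates (Lemmas \ref{lem:efficiency-2}--\ref{lem:efficiency-4}), sum it globally, and then apply H\"older with exponents $(3/2,3)$ followed by the Sobolev embeddings $\bH^1\hookrightarrow\bL^6$ and $\bH^1\hookrightarrow\bL^{3(\rho-2)}$ (valid since $3(\rho-2)\in[3,6]$), together with the $\bH^1$ a priori bounds, to arrive at \eqref{eq:Forch_bound-3}. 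Note also that this contribution is \emph{not} a higher-order remainder: it is absorbed directly into $\|\bu_\rB-\bu_{\rB,h}\|_{1,\Omega_\rB}^2$, so your phrasing ``yield genuinely higher-order remainder terms'' is off. Replacing your $L^\infty$ step by this H\"older--Sobolev argument closes the gap; the rest of your plan goes through.
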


Throughout this section we assume, without loss of generality, that $\bK^{-1}_\rB\bu_{\rB,h}, \bK^{-1}_\rD\bu_{\rD,h}, \f_\rB$, and $\f_\rD$,
are all piecewise polynomials. Otherwise, if $\bK_\rB, \bK_\rD, \f_\rB$, 
and $\f_\rD$ are sufficiently smooth,
one proceeds similarly to \cite[Section 6.2]{cmo2016}, so that higher order terms
given by the errors arising from suitable polynomial approximation of these functions
appear in \eqref{eq:global-efficiency}, which explains the eventual ${\tt h.o.t.}$ in 
this inequality.

\medskip
We begin the derivation of the efficiency estimates with the following result.
\begin{lem}\label{lem:efficiency-1}
There hold
\begin{equation*}
\|\div(\bu_{\rB,h})\|^2_{0,T} 
\,\leq\, 2\,|\bu_\rB - \bu_{\rB,h}|^2_{1,T} \quad \forall\,T\in\cT^\rB_h
\end{equation*}
and
\begin{equation*}
\|g_\rD - \div(\bu_{\rD,h})\|^2_{0,T} 
\,\leq\, \|\bu_\rD - \bu_{\rD,h}\|^2_{\div;T} \quad \forall\,T\in\cT^\rD_h \,.
\end{equation*}
\end{lem}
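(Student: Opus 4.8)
The plan is to reduce both inequalities to the strong form of the mass--conservation constraints satisfied by the exact solutions, which — as recalled in the paragraph preceding Theorem~\ref{th:efficiency} — can be recovered from \eqref{eq:mixed-variational-formulation} by choosing suitable test functions and integrating by parts backwards. Concretely, from \eqref{eq:BF-model} one has $\div(\bu_\rB) = 0$ a.e.\ in $\Omega_\rB$, and from \eqref{eq:Darcy-model} one has $\div(\bu_\rD) = g_\rD$ a.e.\ in $\Omega_\rD$; in particular these identities hold on each element $T$.

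For the first bound, I would use $\div(\bu_\rB) = 0$ to write $\div(\bu_{\rB,h}) = -\div(\bu_\rB - \bu_{\rB,h})$ on $T \in \cT^\rB_h$, and then invoke the elementary pointwise estimate
\[
|\div\bv|^2 = \big(\partial_1 v_1 + \partial_2 v_2\big)^2 \,\le\, 2\big((\partial_1 v_1)^2 + (\partial_2 v_2)^2\big) \,\le\, 2\,|\nabla\bv|^2,
\]
valid for any $\bv = (v_1,v_2)$, applied with $\bv = \bu_\rB - \bu_{\rB,h}$. Integrating this over $T$ and recalling $|\bv|^2_{1,T} = \int_T |\nabla\bv|^2$ gives exactly $\|\div(\bu_{\rB,h})\|^2_{0,T} \le 2\,|\bu_\rB - \bu_{\rB,h}|^2_{1,T}$.

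For the second bound, I would use $\div(\bu_\rD) = g_\rD$ to write $g_\rD - \div(\bu_{\rD,h}) = \div(\bu_\rD - \bu_{\rD,h})$ on $T \in \cT^\rD_h$, so that
\[
\|g_\rD - \div(\bu_{\rD,h})\|^2_{0,T} \,=\, \|\div(\bu_\rD - \bu_{\rD,h})\|^2_{0,T} \,\le\, \|\bu_\rD - \bu_{\rD,h}\|^2_{0,T} + \|\div(\bu_\rD - \bu_{\rD,h})\|^2_{0,T} \,=\, \|\bu_\rD - \bu_{\rD,h}\|^2_{\div;T},
\]
which is the claimed estimate.

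There is no real obstacle in this lemma: the only point deserving a comment is the passage from the mixed variational formulation to the pointwise constraints $\div(\bu_\rB) = 0$ and $\div(\bu_\rD) = g_\rD$, which is standard (and already used implicitly in the discussion preceding Theorem~\ref{th:efficiency}), while everything else is the Cauchy--Schwarz-type inequality for the divergence in two dimensions and the trivial bound $\|\cdot\|_{0,T} \le \|\cdot\|_{\div;T}$.
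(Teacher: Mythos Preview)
Your proof is correct and follows exactly the approach indicated by the paper, which simply recalls $\div(\bu_\rB)=0$ and $\div(\bu_\rD)=g_\rD$ and leaves the ``simple computations'' to the reader. You have merely made those computations explicit (the pointwise bound $|\div\bv|^2\le 2|\nabla\bv|^2$ in 2D and the trivial inequality $\|\div(\cdot)\|_{0,T}\le\|\cdot\|_{\div;T}$), so there is nothing to add.
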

\begin{proof}
It suffices to recall that $\div(\bu_\rB)=0$ in $\Omega_\rB$ (cf. \eqref{eq:BF-model}) and $\div(\bu_\rD)=g_\rD$ in $\Omega_\rD$ (cf. \eqref{eq:Darcy-model}) and to perform simple computations. Further details are omitted.
\end{proof}

We point out that throughout this section each proof done in $2$D can be easily
extended to its three-dimensional counterpart considering $n=3$ when we apply 
\eqref{eq:inverse-inequality} in Lemma \ref{lem:inverse-inequality}. 
In that case, other positive power of the 
meshsizes $h_{T_\star}$, with $\star\in\{\rB,\rD\}$, 
will appear on the right-hand side of the efficiency estimates 
which anyway are bounded. Next, we continue providing the corresponding efficiency 
estimates of our analysis with the upper bounds for the remaining three terms 
defining $\Theta_{\rB,T}$ (cf. \eqref{eq:local-estimator-B}). 
Since the corresponding proofs are adaptations to our configuration 
of those of \cite[Lemmas 4.4, 4.5, and 4.6]{bg2010}, we only mention the main 
tools employed. 
\begin{lem}\label{lem:efficiency-2}
There exists $C>0$, independent of $h$, such that for each $T\in\cT^\rB_h$ there holds
\begin{equation}\label{eq:efficiency-2}
\begin{array}{l}
\ds h^2_T \left\|\f_\rB + \bdiv(\bsi_{\rB,h}) - \bK^{-1}_\rB\bu_{\rB,h} - \tF\,|\bu_{\rB,h}|^{\rho-2}\bu_{\rB,h} \right\|^2_{0,T} \\ [2ex]
\ds\quad \,\leq\, C\,\Big\{\|p_\rB - p_{\rB,h}\|^2_{0,T} 
+ \|\bu_\rB - \bu_{\rB,h}\|^2_{1,T} 
+ h^2_T \big\||\bu_{\rB}|^{\rho-2}\bu_{\rB} - |\bu_{\rB,h}|^{\rho-2}\bu_{\rB,h}\big\|^2_{0,T} \Big\}.
\end{array}
\end{equation}
\end{lem}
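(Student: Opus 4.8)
The plan is to use the standard bubble-function localization technique, mirroring \cite[Lemma 4.4]{bg2010}. Write $\bchi_T := \f_\rB + \bdiv(\bsi_{\rB,h}) - \bK^{-1}_\rB\bu_{\rB,h} - \tF\,|\bu_{\rB,h}|^{\rho-2}\bu_{\rB,h}$, which by our piecewise-polynomial assumption on $\f_\rB$ and $\bK^{-1}_\rB\bu_{\rB,h}$ is itself a polynomial on each $T$ (up to the treatment of the Forchheimer term, to which I return below). Let $\psi_T$ denote the interior bubble function on $T$ supported in $T$. First I would invoke the norm-equivalence estimate \eqref{eq:bubble-equiv} (or its analogue in Appendix \ref{app:preliminaries-for-efficiency}) to get $\|\bchi_T\|^2_{0,T} \lesssim \int_T \psi_T\,|\bchi_T|^2 = \int_T \psi_T\,\bchi_T\cdot\bchi_T$.

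Next, I would recognize that the \emph{continuous} counterpart of $\bchi_T$ vanishes: from \eqref{eq:BF-model} we have $\f_\rB + \bdiv(\bsi_\rB) - \bK^{-1}_\rB\bu_\rB - \tF\,|\bu_\rB|^{\rho-2}\bu_\rB = \0$ in $\Omega_\rB$, with $\bsi_\rB = -p_\rB\bbI + \mu\nabla\bu_\rB$. Subtracting, $\bchi_T = \bdiv(\bsi_{\rB,h} - \bsi_\rB) + \bK^{-1}_\rB(\bu_\rB - \bu_{\rB,h}) + \tF\big(|\bu_\rB|^{\rho-2}\bu_\rB - |\bu_{\rB,h}|^{\rho-2}\bu_{\rB,h}\big)$ on $T$. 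Substituting this into $\int_T \psi_T\,\bchi_T\cdot\bchi_T$ and integrating by parts the divergence term against the test function $\psi_T\bchi_T \in \bH^1_0(T)$ (so no boundary contributions appear), the divergence term becomes $-\int_T (\bsi_{\rB,h} - \bsi_\rB) : \nabla(\psi_T\bchi_T)$, and $\bsi_{\rB,h} - \bsi_\rB = -(p_{\rB,h} - p_\rB)\bbI + \mu\nabla(\bu_{\rB,h} - \bu_\rB)$. Then Cauchy--Schwarz on every resulting term, together with the inverse inequality \eqref{eq:inverse-inequality} to absorb $\|\nabla(\psi_T\bchi_T)\|_{0,T} \lesssim h_T^{-1}\|\bchi_T\|_{0,T}$ and the pointwise bound $\|\psi_T\bchi_T\|_{0,T} \le \|\bchi_T\|_{0,T}$, yields
\[
\|\bchi_T\|^2_{0,T} \,\lesssim\, \big( h_T^{-1}\|p_\rB - p_{\rB,h}\|_{0,T} + h_T^{-1}\|\bu_\rB - \bu_{\rB,h}\|_{1,T} + \|\bK^{-1}_\rB\|_\infty\|\bu_\rB - \bu_{\rB,h}\|_{0,T} + \tF\,\big\||\bu_\rB|^{\rho-2}\bu_\rB - |\bu_{\rB,h}|^{\rho-2}\bu_{\rB,h}\big\|_{0,T} \big)\,\|\bchi_T\|_{0,T}.
\]
Cancelling one power of $\|\bchi_T\|_{0,T}$, squaring, and multiplying through by $h_T^2$ gives exactly \eqref{eq:efficiency-2} (the $h_T^2\|\bK^{-1}_\rB\|_\infty^2\|\bu_\rB - \bu_{\rB,h}\|_{0,T}^2$ term is absorbed into $\|\bu_\rB - \bu_{\rB,h}\|^2_{1,T}$, since $h_T$ is bounded).

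The one subtlety — and the point I expect to require the most care — is the Forchheimer term $\tF\,|\bu_{\rB,h}|^{\rho-2}\bu_{\rB,h}$: because $\rho$ need not be an even integer, this is \emph{not} a polynomial even when $\bu_{\rB,h}$ is, so $\bchi_T$ is not genuinely a polynomial on $T$ and the bubble-equivalence estimate does not apply verbatim. The standard remedy, which I would follow, is to add and subtract a polynomial approximation: replace $|\bu_{\rB,h}|^{\rho-2}\bu_{\rB,h}$ by a suitable $\bL^2(T)$-projection onto polynomials (or simply treat the whole quantity $|\bu_{\rB,h}|^{\rho-2}\bu_{\rB,h}$ as data that is approximated, exactly as $\bK_\rB$ and $\f_\rB$ are in the opening paragraph of this section). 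The approximation error is of higher order and is precisely the source of one of the ${\tt h.o.t.}$ contributions announced in Theorem \ref{th:efficiency}; note that the right-hand side of \eqref{eq:efficiency-2} already explicitly retains the term $h^2_T\,\big\||\bu_\rB|^{\rho-2}\bu_\rB - |\bu_{\rB,h}|^{\rho-2}\bu_{\rB,h}\big\|^2_{0,T}$, which will later be bounded by $\|\bu_\rB - \bu_{\rB,h}\|^2_{1,T}$ (up to h.o.t.) using the local Lipschitz-type estimate for the map $\bz\mapsto|\bz|^{\rho-2}\bz$ on $\R^2$ together with the Sobolev embedding \eqref{eq:Sobolev-inequality} and the a priori bounds \eqref{eq:a-priori-bound-u}, \eqref{eq:a-priori-bound-uh}. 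I would omit the routine details of this last reduction, as the statement is deliberately formulated to carry the Forchheimer difference along as a separate term.
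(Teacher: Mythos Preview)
Your proposal is correct and follows essentially the same route as the paper: define the residual $\bchi_T$, invoke the element-bubble equivalence \eqref{eq:buble-property-1}, replace $\f_\rB$ via the strong equation \eqref{eq:BF-model}, integrate the $\bdiv$ term by parts against $\phi_T\bchi_T\in\bH^1_0(T)$, and close with Cauchy--Schwarz plus the inverse inequality \eqref{eq:inverse-inequality}. Your explicit discussion of why $|\bu_{\rB,h}|^{\rho-2}\bu_{\rB,h}$ need not be a polynomial (and how this is absorbed into the piecewise-polynomial assumption and the ${\tt h.o.t.}$ of Theorem~\ref{th:efficiency}) is a point the paper handles only implicitly through its blanket assumption preceding Lemma~\ref{lem:efficiency-1}, so your treatment is in fact slightly more transparent here.
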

\begin{proof}
We follow an analogous reasoning to the proof of \cite[Lemma 4.4]{bg2010}.	
In fact, given $T\in \cT^\rB_h$, we define $\bchi_T := \f_\rB + \bdiv(\bsi_{\rB,h}) - \bK^{-1}_\rB\bu_{\rB,h} - \tF\,|\bu_{\rB,h}|^{\rho-2}\bu_{\rB,h}$ in $T$. Then, applying \eqref{eq:buble-property-1} to $\|\bchi_T\|_{0,T}$, we obtain
\begin{equation*}
\left\|\bchi_T\right\|^2_{0,T}
\,\leq\, c_1\,\int_T \phi_T\bchi_T\cdot ( \f_\rB + \bdiv(\bsi_{\rB,h}) - \bK^{-1}_\rB\bu_{\rB,h} - \tF\,|\bu_{\rB,h}|^{\rho-2}\bu_{\rB,h} )\,,
\end{equation*}	
where, using the identity $\f_\rB = \bK^{-1}_\rB\bu_\rB + \tF\,|\bu_{\rB}|^{\rho-2}\bu_{\rB} - \bdiv(\bsi_\rB)$ in $\Omega_\rB$ (cf. \eqref{eq:BF-model}), integration by parts and the fact that $\phi_T=0$ on $\partial\,T$ (cf. \eqref{eq:buble-property-null-boundary-T}), we deduce that
\begin{equation*}
\begin{array}{l}
\ds \left\|\bchi_T\right\|^2_{0,T}
\,\leq\, c_1\,\Bigg\{ \int_T \nabla(\phi_T\bchi_T):(\bsi_\rB - \bsi_{\rB,h}) \\[3ex]
\ds\quad \,+\, \int_T \phi_T\bchi_T\cdot\Big( \bK^{-1}_\rB(\bu_\rB-\bu_{\rB,h}) + \tF\,\big(|\bu_{\rB}|^{\rho-2}\bu_{\rB} - |\bu_{\rB,h}|^{\rho-2}\bu_{\rB,h}\big) \Big) \Bigg\} \,.
\end{array}
\end{equation*}	
Then, the Cauchy--Schwarz inequality, the inverse estimate \eqref{eq:inverse-inequality} with $\ell=1$ and $m=0$, the fact that $0\leq \phi_T \leq 1$ (cf. \eqref{eq:buble-property-null-boundary-T}), the definitions of $\bsi_\rB$ and $\bsi_{\rB,h}$ (cf. \eqref{eq:BF-model}, \eqref{eq:stress-Bh-definition}), and the triangle inequality imply that
\begin{equation}\label{eq:efficiency-2-aux}
\begin{array}{l}
\ds \left\|\bchi_T\right\|^2_{0,T}
\,\leq\, C\,\Big\{ h^{-1}_T\Big( \|p_\rB - p_{\rB,h}\|_{0,T} + |\bu_{\rB} - \bu_{\rB,h}|_{1,T} \Big) \\[2ex] 
\ds\quad \,+\, \|\bu_{\rB} - \bu_{\rB,h}\|_{0,T} + \big\||\bu_{\rB}|^{\rho-2}\bu_{\rB} - |\bu_{\rB,h}|^{\rho-2}\bu_{\rB,h}\big\|_{0,T}  \Big\} \|\bchi_T\|_{0,T} \,.
\end{array}
\end{equation}
Finally, dividing by $\|\bchi_T\|_{0,T}$ in \eqref{eq:efficiency-2-aux} and performing simple computations, we arrive at \eqref{eq:efficiency-2}, thus concluding the proof.
\end{proof}

\begin{lem}\label{lem:efficiency-3}
There exists $C>0$, independent of $h$, such that for each $e\in\cE_h(\Omega_\rB)$ there holds
\begin{equation}\label{eq:efficiency-3}
\begin{array}{l}
\ds h_e\big\|\jump{\bsi_{\rB,h}\bn}\big\|^2_{0,e}  
\leq C\sum_{T\subseteq \omega_e} \Big\{ \|p_\rB - p_{\rB,h}\|^2_{0,T} \\[3ex] 
\ds\quad +\, \|\bu_\rB - \bu_{\rB,h}\|^2_{1,T}  
+ h^2_T \big\||\bu_{\rB}|^{\rho-2}\bu_{\rB} - |\bu_{\rB,h}|^{\rho-2}\bu_{\rB,h}\big\|^2_{0,T} \Big\} \,,
\end{array}
\end{equation}
where $\omega_e$ is the union of the two triangles in $\cT^\rB_h$ having $e$ as an edge.
\end{lem}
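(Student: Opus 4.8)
The plan is to localize the edge residual by the standard edge-bubble-function technique, adapting \cite[Lemma 4.5]{bg2010} to the present nonlinear setting. Fix $e\in\cE_h(\Omega_\rB)$ and let $\omega_e$ be the patch formed by the two triangles of $\cT^\rB_h$ sharing $e$. Let $\psi_e$ be the edge bubble function supported on $\omega_e$ (cf. Appendix~\ref{app:preliminaries-for-efficiency}) and let $\brho_e$ be a fixed polynomial extension to $\omega_e$ of the piecewise polynomial jump $\jump{\bsi_{\rB,h}\bn}$ defined on $e$. First I would invoke the edge-bubble inequality to obtain
\[
\big\|\jump{\bsi_{\rB,h}\bn}\big\|^2_{0,e}\,\le\, c\int_e\psi_e\,\jump{\bsi_{\rB,h}\bn}\cdot\brho_e\,.
\]

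The next step exploits the admissibility of the exact stress: from \eqref{eq:BF-model} we have $\bsi_\rB=-p_\rB\bbI+\mu\nabla\bu_\rB$ and $\bdiv(\bsi_\rB)=\bK^{-1}_\rB\bu_\rB+\tF\,|\bu_\rB|^{\rho-2}\bu_\rB-\f_\rB\in\bL^2(\Omega_\rB)$, so $\bsi_\rB\in\bH(\bdiv;\Omega_\rB)$ and its normal trace is continuous across $e$; hence $\jump{\bsi_\rB\bn}=\mathbf{0}$ and we may replace $\jump{\bsi_{\rB,h}\bn}$ by $\jump{(\bsi_{\rB,h}-\bsi_\rB)\bn}$ in the integral. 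Integrating by parts on each of the two triangles $T\subseteq\omega_e$, and using that $\psi_e$ vanishes on $\partial\omega_e$ and on $\partial T\setminus e$, the boundary terms off $e$ drop out and
\[
\big\|\jump{\bsi_{\rB,h}\bn}\big\|^2_{0,e}\,\le\, c\sum_{T\subseteq\omega_e}\int_T\Big(\nabla(\psi_e\brho_e):(\bsi_{\rB,h}-\bsi_\rB)+\psi_e\brho_e\cdot\bdiv(\bsi_{\rB,h}-\bsi_\rB)\Big)\,.
\]

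Then I would split $\bdiv(\bsi_{\rB,h}-\bsi_\rB)$ on each $T$, using the identity above for $\bdiv(\bsi_\rB)$, as
\[
\bdiv(\bsi_{\rB,h}-\bsi_\rB)=\big(\f_\rB+\bdiv(\bsi_{\rB,h})-\bK^{-1}_\rB\bu_{\rB,h}-\tF\,|\bu_{\rB,h}|^{\rho-2}\bu_{\rB,h}\big)+\bK^{-1}_\rB(\bu_{\rB,h}-\bu_\rB)+\tF\big(|\bu_{\rB,h}|^{\rho-2}\bu_{\rB,h}-|\bu_\rB|^{\rho-2}\bu_\rB\big)\,,
\]
so the first summand is exactly the volumetric residual already controlled in Lemma~\ref{lem:efficiency-2}. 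Applying the Cauchy--Schwarz inequality together with the scaled inverse estimates for the bubble, $\|\psi_e\brho_e\|_{0,T}\le c\,h_e^{1/2}\,\|\jump{\bsi_{\rB,h}\bn}\|_{0,e}$ and $\|\nabla(\psi_e\brho_e)\|_{0,T}\le c\,h_e^{-1/2}\,\|\jump{\bsi_{\rB,h}\bn}\|_{0,e}$ (cf. \eqref{eq:inverse-inequality}), the shape-regularity relation $h_T\simeq h_e$ for $T\subseteq\omega_e$, the bound $\|\bsi_{\rB,h}-\bsi_\rB\|_{0,T}\le\|p_\rB-p_{\rB,h}\|_{0,T}+\mu\,|\bu_\rB-\bu_{\rB,h}|_{1,T}$ obtained from \eqref{eq:stress-Bh-definition}, and Lemma~\ref{lem:efficiency-2} to absorb the factor $h_e^{1/2}\|\f_\rB+\bdiv(\bsi_{\rB,h})-\bK^{-1}_\rB\bu_{\rB,h}-\tF\,|\bu_{\rB,h}|^{\rho-2}\bu_{\rB,h}\|_{0,T}$, one divides by $\|\jump{\bsi_{\rB,h}\bn}\|_{0,e}$, multiplies by $h_e$ and squares to arrive at \eqref{eq:efficiency-3}.

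The step I expect to be the main obstacle is the treatment of the term $\psi_e\brho_e\cdot\bdiv(\bsi_{\rB,h}-\bsi_\rB)$: since $\bdiv(\bsi_{\rB,h})$ is not by itself a quantity under control, it must be routed through the volumetric efficiency estimate of Lemma~\ref{lem:efficiency-2}, which is made possible precisely because $\|\psi_e\brho_e\|_{0,T}\simeq h_e^{1/2}\|\jump{\bsi_{\rB,h}\bn}\|_{0,e}$ supplies the $h_T$ weight carried by that residual. The remaining Forchheimer contribution $\tF\,h_T\,\||\bu_\rB|^{\rho-2}\bu_\rB-|\bu_{\rB,h}|^{\rho-2}\bu_{\rB,h}\|_{0,T}$ is simply carried along and reappears as the corresponding term on the right-hand side of \eqref{eq:efficiency-3}; no argument beyond that of \cite{bg2010} is required here.
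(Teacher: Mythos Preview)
Your proposal is correct and follows essentially the same route as the paper's proof: both adapt \cite[Lemma 4.5]{bg2010} via the edge-bubble localization, the observation $\jump{\bsi_\rB\bn}=\mathbf{0}$, integration by parts on each $T\subseteq\omega_e$, the inverse and bubble-scaling estimates, the splitting of $\bdiv(\bsi_{\rB,h}-\bsi_\rB)$ using the equilibrium identity for $\bdiv(\bsi_\rB)$, and finally the invocation of Lemma~\ref{lem:efficiency-2} to absorb the volumetric residual. The only cosmetic difference is that the paper first records an intermediate bound displaying the volumetric residual and $\|\bsi_\rB-\bsi_{\rB,h}\|_{0,T}$ explicitly before applying Lemma~\ref{lem:efficiency-2} and the definition of the stresses, whereas you fold these steps together.
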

\begin{proof}
First, we proceed as in the proof of \cite[(4.11) in Lemma 4.5]{bg2010} by noting that $\bsi_\rB\in \bbH(\bdiv;\Omega_\rB)$ and thus $\jump{\bsi_\rB\bn} = \0$ on each $e\in \cE_h(\Omega_\rB)$. 
Then, by employing \eqref{eq:buble-property-2}, integration by parts formula on each $T\subseteq \omega_e$, the inverse estimate \eqref{eq:inverse-inequality} with $\ell=1$ and $m=0$, the fact that $\bdiv(\bsi_\rB) = - \f_\rB + \bK^{-1}_\rB\bu_\rB + \tF\,|\bu_{\rB}|^{\rho-2}\bu_{\rB}$ in $\Omega_\rB$ (cf. \eqref{eq:BF-model}), and the estimate \eqref{eq:buble-property-3}, we obtain
\begin{equation*}
\begin{array}{l}
\ds h_e\big\|\jump{\bsi_{\rB,h}\bn}\big\|^2_{0,e}  
\leq C\,\sum_{T\subseteq \omega_e} \Bigg\{ 
h^2_T \left\|\f_\rB + \bdiv(\bsi_{\rB,h}) - \bK^{-1}_\rB\bu_{\rB,h} - \tF\,|\bu_{\rB,h}|^{\rho-2}\bu_{\rB,h} \right\|^2_{0,T} \\[2ex]
\ds\quad \,+\, h^2_T\,\|\bu_{\rB} - \bu_{\rB,h}\|^2_{0,T} 
+ h^2_T\,\big\||\bu_{\rB}|^{\rho-2}\bu_{\rB} - |\bu_{\rB,h}|^{\rho-2}\bu_{\rB,h}\big\|^2_{0,T}
+ \|\bsi_\rB - \bsi_{\rB,h}\|^2_{0,T} 
\Bigg\} \,,
\end{array}
\end{equation*}
which, together with \eqref{eq:efficiency-2} and the definitions of $\bsi_\rB$ and $\bsi_{\rB,h}$ (cf. \eqref{eq:BF-model}, \eqref{eq:stress-Bh-definition}), yields \eqref{eq:efficiency-3} and complete the proof.
\end{proof}

\begin{lem}\label{lem:efficiency-4}
There exists $C>0$, independent of $h$, such that for each $e\in\cE_h(\Sigma)$ there holds
\begin{equation}\label{eq:efficiency-4}
\begin{array}{l}
\ds h_e\,\left\|\bsi_{\rB,h}\bn + \lambda_h\bn \right\|^2_{0,e}
\leq C\,\Big\{ \|p_\rB - p_{\rB,h}\|^2_{0,T} + \|\bu_\rB - \bu_{\rB,h}\|^2_{1,T} \\ [3ex]
\ds\quad +\,\, h^2_T\,\big\||\bu_{\rB}|^{\rho-2}\bu_{\rB} - |\bu_{\rB,h}|^{\rho-2}\bu_{\rB,h}\big\|^2_{0,T} 
\,+\, h_e\,\|\lambda - \lambda_h\|^2_{0,e}\Big\} \,,
\end{array}
\end{equation}
where $T$ is the triangle of $\cT^\rB_h$ having $e$ as an edge.
\end{lem}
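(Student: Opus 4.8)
The plan is to follow the reasoning of \cite[Lemma 4.6]{bg2010}, adapted to the Brinkman--Forchheimer operator. The crucial observation is that the continuity-of-momentum transmission condition in \eqref{eq:transmission-condition}, together with $\lambda = p_\rD|_\Sigma$, yields $\bsi_\rB\bn + \lambda\,\bn = \0$ on $\Sigma$. Hence, for $e\in\cE_h(\Sigma)$ with associated triangle $T\in\cT^\rB_h$, the estimator contribution is a genuine discrete residual and can be rewritten as
\[
\bsi_{\rB,h}\bn + \lambda_h\bn \,=\, (\bsi_{\rB,h} - \bsi_\rB)\bn \,+\, (\lambda_h - \lambda)\,\bn \qon e\,.
\]
Moreover, since $p_{\rB,h}|_T\in\rP_0(T)$, $\nabla\bu_{\rB,h}|_T$ is polynomial (Bernardi--Raugel), and $\lambda_h|_e\in\rP_1(e)$, the quantity $\bsi_{\rB,h}\bn+\lambda_h\bn$ is a polynomial on $e$, so standard edge-bubble arguments apply.

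First I would introduce the edge bubble function $\psi_e$ and let $\bv_e$ be the extension to $T$ of $\psi_e\,(\bsi_{\rB,h}\bn+\lambda_h\bn)$, which vanishes on $\partial T\setminus e$; its properties, recalled in Appendix \ref{app:preliminaries-for-efficiency}, give $\|\bsi_{\rB,h}\bn+\lambda_h\bn\|^2_{0,e}\lesssim\int_e(\bsi_{\rB,h}\bn+\lambda_h\bn)\cdot\bv_e$ (cf.\ \eqref{eq:buble-property-2}) together with $\|\bv_e\|_{0,T}\lesssim h_e^{1/2}\,\|\bsi_{\rB,h}\bn+\lambda_h\bn\|_{0,e}$ and $\|\nabla\bv_e\|_{0,T}\lesssim h_e^{-1/2}\,\|\bsi_{\rB,h}\bn+\lambda_h\bn\|_{0,e}$ (cf.\ \eqref{eq:buble-property-3} and \eqref{eq:inverse-inequality} with $\ell=1$, $m=0$). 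Inserting the decomposition above, the term carrying $(\lambda_h-\lambda)\bn$ is bounded directly by $\|\lambda-\lambda_h\|_{0,e}\,\|\bsi_{\rB,h}\bn+\lambda_h\bn\|_{0,e}$ (using $0\le\psi_e\le1$), while for the term carrying $(\bsi_{\rB,h}-\bsi_\rB)\bn$ I would integrate by parts over $T$ and, using that $\bv_e$ vanishes on $\partial T\setminus e$, obtain
\[
\int_e (\bsi_{\rB,h}-\bsi_\rB)\bn\cdot\bv_e \,=\, \int_T \bdiv(\bsi_{\rB,h}-\bsi_\rB)\cdot\bv_e \,+\, \int_T (\bsi_{\rB,h}-\bsi_\rB):\nabla\bv_e\,.
\]

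Then I would substitute $\bdiv\bsi_\rB = -\,\f_\rB + \bK^{-1}_\rB\bu_\rB + \tF\,|\bu_\rB|^{\rho-2}\bu_\rB$ from \eqref{eq:BF-model} and add and subtract $\bK^{-1}_\rB\bu_{\rB,h}$ and $\tF\,|\bu_{\rB,h}|^{\rho-2}\bu_{\rB,h}$, so that $\bdiv(\bsi_{\rB,h}-\bsi_\rB)$ equals the elementwise residual $\bchi_T$ from Lemma \ref{lem:efficiency-2} plus $\bK^{-1}_\rB(\bu_{\rB,h}-\bu_\rB)$ and $\tF\,(|\bu_{\rB,h}|^{\rho-2}\bu_{\rB,h}-|\bu_\rB|^{\rho-2}\bu_\rB)$; likewise $\|\bsi_{\rB,h}-\bsi_\rB\|_{0,T}\le\|p_\rB-p_{\rB,h}\|_{0,T}+\mu\,|\bu_\rB-\bu_{\rB,h}|_{1,T}$ by \eqref{eq:stress-Bh-definition}. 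Applying the Cauchy--Schwarz inequality, the stability bounds for $\bv_e$, dividing by $\|\bsi_{\rB,h}\bn+\lambda_h\bn\|_{0,e}$, multiplying by $h_e^{1/2}$ and squaring, and finally invoking \eqref{eq:efficiency-2} to absorb $h_T^2\,\|\bchi_T\|^2_{0,T}$ (and using $h_T^2\,\|\bu_\rB-\bu_{\rB,h}\|^2_{0,T}\lesssim\|\bu_\rB-\bu_{\rB,h}\|^2_{1,T}$) yields \eqref{eq:efficiency-4}. I expect the only delicate point to be the bookkeeping of the powers of $h_e\simeq h_T$ and the clean reuse of Lemma \ref{lem:efficiency-2}; the Forchheimer contribution presents no new obstacle, being simply transported along into the term $h_T^2\,\||\bu_\rB|^{\rho-2}\bu_\rB-|\bu_{\rB,h}|^{\rho-2}\bu_{\rB,h}\|^2_{0,T}$.
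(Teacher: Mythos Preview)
Your proposal is correct and follows essentially the same approach as the paper: both use the edge-bubble localization via $\phi_e\bL(\bchi_e)$ (your $\bv_e$), insert the transmission identity $\bsi_\rB\bn+\lambda\bn=\0$ on $\Sigma$, integrate by parts on $T$, substitute $\bdiv(\bsi_\rB)$ from \eqref{eq:BF-model}, and finish by invoking the interior estimate \eqref{eq:efficiency-2} together with the bound $\|\bsi_\rB-\bsi_{\rB,h}\|_{0,T}\lesssim\|p_\rB-p_{\rB,h}\|_{0,T}+|\bu_\rB-\bu_{\rB,h}|_{1,T}$. The only differences are notational and in the order of presentation.
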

\begin{proof}
First, following the approach in the proof of \cite[Lemma 4.6]{bg2010}, we define $\bchi_e:= \bsi_{\rB,h}\bn + \lambda_h\bn$ on $e$. 
Then, applying \eqref{eq:buble-property-2}, using that $\bsi_\rB\bn + \lambda\bn = \0$ on $\Sigma$ (cf. \eqref{eq:transmission-condition}), recalling that $\phi_e=0$ on $\partial\,T\setminus e$, with $T$ being the triangle on $\cT^\rB_h$ having $e$ as an edge, and integrating by parts on $T$, we obtain that
\begin{equation*}
\begin{array}{l}
\ds \big\|\bchi_e\big\|^2_{0,e}  
\leq c_2 \int_T \nabla(\phi_e\bL(\bchi_e)):(\bsi_{\rB,h} - \bsi_\rB) \\[3ex]
\ds\quad +\, c_2\int_T \phi_e\,\bL(\bchi_e)\cdot\bdiv(\bsi_{\rB,h} - \bsi_\rB)
\,+\, c_2\int_e \phi_e\bchi_e\cdot (\lambda_h - \lambda)\bn  \,.
\end{array}
\end{equation*}
Hence, the Cauchy--Schwarz inequality, the inverse estimate \eqref{eq:inverse-inequality} with $\ell=1$ and $m=0$, the fact that $0\leq \phi_e \leq 1$ (cf. \eqref{eq:buble-property-null-boundary-e}), the fact that $\bdiv(\bsi_\rB) = - \f_\rB + \bK^{-1}_\rB\bu_\rB + \tF\,|\bu_{\rB}|^{\rho-2}\bu_{\rB}$ in $\Omega_\rB$ (cf. \eqref{eq:BF-model}), and adding and subtracting suitable terms, imply from the above equation that
\begin{equation*}
\begin{array}{l}
\ds \big\|\bchi_e\big\|^2_{0,e}  
\leq C\,\Big\{ h^{-1}_T\,\|\bsi_\rB - \bsi_{\rB,h}\|_{0,T} 
+ \|\f_\rB + \bdiv(\bsi_{\rB,h}) - \bK^{-1}_\rB\bu_{\rB,h} - \tF\,|\bu_{\rB,h}|^{\rho-2}\bu_{\rB,h}\|_{0,T} \\[2ex]
\ds\quad +\, \|\bu_{\rB} - \bu_{\rB,h}\|_{0,T} 
+ \big\||\bu_{\rB}|^{\rho-2}\bu_{\rB} - |\bu_{\rB,h}|^{\rho-2}\bu_{\rB,h}\big\|_{0,T} \Big\}\|\phi_e\bL(\bchi_e)\|_{0,T}
+ C\,\|\lambda - \lambda_h\|_{0,e}\,\|\bchi_e\|_{0,e}  \,.
\end{array}
\end{equation*}
Now, applying the estimate \eqref{eq:buble-property-3}, we see that $\|\phi_e\bL(\bchi_e)\|_{0,T} \leq c_3\,h^{1/2}_e\|\bchi_e\|_{0,e}$, which together with the fact that $h_e\leq h_T$, yields
\begin{equation*}
\begin{array}{l}
\ds h_e\,\big\|\bchi_e\big\|^2_{0,e}  
\leq C\,\Big\{ \|\bsi_\rB - \bsi_{\rB,h}\|^2_{0,T} 
+ h^2_T\,\|\f_\rB + \bdiv(\bsi_{\rB,h}) - \bK^{-1}_\rB\bu_{\rB,h} - \tF\,|\bu_{\rB,h}|^{\rho-2}\bu_{\rB,h}\|^2_{0,T} \\[2ex]
\ds\quad +\, h^2_T\,\|\bu_{\rB} - \bu_{\rB,h}\|^2_{0,T} 
+ h^2_T\,\big\||\bu_{\rB}|^{\rho-2}\bu_{\rB} - |\bu_{\rB,h}|^{\rho-2}\bu_{\rB,h}\big\|^2_{0,T}
+ h_e\,\|\lambda - \lambda_h\|^2_{0,e} \Big\} \,,
\end{array}
\end{equation*}
which, together with \eqref{eq:efficiency-2} and the definitions of $\bsi_\rB$ and $\bsi_{\rB,h}$ (cf. \eqref{eq:BF-model}, \eqref{eq:stress-Bh-definition}), yields \eqref{eq:efficiency-4} and complete the proof.
\end{proof}

At this point, we stress that the local efficiency estimates for the remaining terms defining
$\Theta_\rD$ (cf. \eqref{eq:local-estimator-D}) have already been proved in the literature 
by using the localization technique based on triangle-bubble and edge-bubble functions 
(cf. eqs. \eqref{eq:buble-property-null-boundary-T}--\eqref{eq:buble-property-null-boundary-e} and Lemma \ref{lem:properties-bubble}), the local inverse inequality 
(cf. \eqref{eq:inverse-inequality}) with $\ell=1$ and $m=0$, and the discrete trace inequality 
(cf. \eqref{eq:discrete-trace-inequality}). More precisely, we provide the following result.
\begin{lem}\label{lem:efficiency-5}
There exist positive constants $C_i, i\in \{1,\dots,6\}$, independent of $h$, such that
\begin{itemize}
\item[{\rm (a)}] $\ds h^2_T\big\|\rot\left(\f_\rD - \bK^{-1}_\rD\bu_{\rD,h}\right)\big\|^2_{0,T}
\,\leq\, C_1\,\|\bu_\rD - \bu_{\rD,h}\|_{0,T} \quad \forall\,T\in \cT^\rD_h$\,,

\item[{\rm (b)}] $\ds h_e\,\left\|\jump{\left(\f_\rD - \bK^{-1}_\rD\bu_{\rD,h}\right)\cdot\bt}\right\|^2_{0,e}
\,\leq\, C_2\,\|\bu_\rD - \bu_{\rD,h}\|_{0,\omega_e}$ for all $e\in \cE_h(\Omega_\rD)$, where the set $\omega_e$ is given by $\omega_e:=\cup \big\{ T'\in \cT^\rD_h : \quad e\in \cE(T') \big\}$\,,

\item[{\rm (c)}] $\ds h^2_T \left\|\f_\rD - \bK^{-1}_\rD \bu_{\rD,h}\right\|^2_{0,T} 
\,\leq\, C_3\,\Big\{ \|p_\rD - p_{\rD,h}\|^2_{0,T} + h^2_T\,\|\bu_\rD - \bu_{\rD,h}\|^2_{0,T} \Big\} \quad \forall\,T\in \cT^\rD_h$\,,

\item[{\rm (d)}] $\ds h_e\,\|\bu_{\rB,h}\cdot\bn - \bu_{\rD,h}\cdot\bn\|^2_{0,e}
\,\leq\, C_4\,\Big\{ \|\bu_\rB - \bu_{\rB,h}\|^2_{0,T_\rB} + h^2_{T_\rB}\,|\bu_\rB - \bu_{\rB,h}|^2_{1,T_\rB} + \|\bu_\rD - \bu_{\rD,h}\|^2_{0,T_\rD}  + h^2_{T_\rD}\,\|\div(\bu_\rD - \bu_{\rD,h})\|^2_{0,T_\rD} \Big\}$, where $T_\star$ are the triangles of $\cT^\star_h$, with $\star\in \{\rB,\rD\}$, having $e$ as an edge\,,

\item[{\rm (e)}] $\ds h_e\,\|\lambda_h - p_{\rD,h}\|^2_{0,e}
\,\leq\, C_5\,\Big\{ \|p_\rD - p_{\rD,h}\|^2_{0,T} + h^2_T\,\|\bu_\rD - \bu_{\rD,h}\|^2_{0,T}
+ h_e\,\|\lambda - \lambda_h\|^2_{0,e} \Big\}$ for all $e\in \cE_h(\Omega_\rD)$, where $T$ is the triangle of $\cT^\rD_h$ having $e$ as an edge\,,

\item[{\rm (f)}] $\ds \sum_{e\in \cE_h(\Sigma)} h_e\,\left\|\left(\f_\rD - \bK^{-1}_\rD\bu_{\rD,h}\right)\cdot\bt - \frac{d\,\lambda_h}{d\,\bt} \right\|^2_{0,e}
\,\leq\,C_6\,\Big\{ \|\bu_\rD - \bu_{\rD,h}\|^2_{0,\Omega_\rD} \,+\, \|\lambda - \lambda_h\|^2_{1/2,\Sigma} \Big\}$\,.
\end{itemize}
\end{lem}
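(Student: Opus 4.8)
The plan is to establish each of the six bounds (a)--(f) by the classical Verf\"urth-type localization technique, treating element-based terms with triangle-bubble functions and edge-based terms with edge-bubble functions, and always reinserting the exact PDE relations from \eqref{eq:BF-model}, \eqref{eq:Darcy-model}, and \eqref{eq:transmission-condition} after an integration by parts so that the residual becomes an error. Since $\bK^{-1}_\rD\bu_{\rD,h}$ and $\f_\rD$ are assumed piecewise polynomial, the quantities $\rot(\f_\rD-\bK^{-1}_\rD\bu_{\rD,h})$, $(\f_\rD-\bK^{-1}_\rD\bu_{\rD,h})\cdot\bt$ and their jumps are polynomials on each element or edge, so Lemma~\ref{lem:properties-bubble} (the equivalences \eqref{eq:buble-property-1}--\eqref{eq:buble-property-3}), the inverse inequality \eqref{eq:inverse-inequality} with $\ell=1,m=0$, and the discrete trace inequality \eqref{eq:discrete-trace-inequality} apply directly.

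For (a), I would use that $\rot(\f_\rD-\bK^{-1}_\rD\bu_\rD)=\rot(\nabla p_\rD)=0$ in $\Omega_\rD$ (from the first equation of \eqref{eq:Darcy-model}), so $\rot(\f_\rD-\bK^{-1}_\rD\bu_{\rD,h})=\rot(\bK^{-1}_\rD(\bu_\rD-\bu_{\rD,h}))$; multiply by the bubble-weighted residual, integrate by parts on $T$ (the bubble vanishes on $\partial T$), apply Cauchy--Schwarz, the inverse estimate to move one power of $h_T^{-1}$, and boundedness of $\bK^{-1}_\rD$. For (b), the jump $\jump{(\f_\rD-\bK^{-1}_\rD\bu_\rD)\cdot\bt}=0$ on interior edges since $\nabla p_\rD\cdot\bt$ has continuous tangential trace, so the same computation on $\omega_e$ with an edge bubble, extended into the two triangles via \eqref{eq:buble-property-3}, gives the bound in terms of $\|\bu_\rD-\bu_{\rD,h}\|_{0,\omega_e}$. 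For (c), test the residual $\f_\rD-\bK^{-1}_\rD\bu_{\rD,h}$ against $\phi_T\bchi_T$, use $\f_\rD-\bK^{-1}_\rD\bu_\rD=\nabla p_\rD$, integrate by parts so that $p_\rD$ appears, and replace $p_{\rD,h}$ using $\int_e(\phi_T\bchi_T)$-type terms (noting $\nabla p_{\rD,h}=0$ elementwise); Cauchy--Schwarz and the inverse inequality then yield the stated mix of $\|p_\rD-p_{\rD,h}\|_{0,T}$ and $h_T\|\bu_\rD-\bu_{\rD,h}\|_{0,T}$. For (e), write $\lambda_h-p_{\rD,h}=(\lambda_h-\lambda)+(p_\rD-p_{\rD,h})+(\lambda-p_\rD)$ and use the transmission/boundary identity $\lambda=p_\rD|_\Sigma$ (since $\lambda:=p_\rD|_\Sigma$ is how it is defined), then localize the $p_\rD-p_{\rD,h}$ contribution with an edge bubble exactly as in (c); this produces the $h_e\|\lambda-\lambda_h\|^2_{0,e}$ term explicitly. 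For (d), use the mass-conservation transmission condition $\bu_\rB\cdot\bn=\bu_\rD\cdot\bn$ on $\Sigma$ to write $\bu_{\rB,h}\cdot\bn-\bu_{\rD,h}\cdot\bn=(\bu_{\rB,h}-\bu_\rB)\cdot\bn+(\bu_\rD-\bu_{\rD,h})\cdot\bn$, then bound each normal trace by an $\H(\div)$- or $\H^1$-trace inequality localized via edge bubbles, producing both the $\L^2$ and the gradient/divergence contributions on the two adjacent triangles.

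For (f), which is the only bound stated globally on $\Sigma$ rather than edge-by-edge, I would exploit the exact momentum transmission relation together with the Darcy equation on the interface: from $\bsi_\rB\bn=-p_\rD\bn$ on $\Sigma$ one has no direct tangential information, so instead I use that along $\Sigma$ the tangential derivative of the exact pressure satisfies $\frac{d p_\rD}{d\bt}=\frac{d\lambda}{d\bt}=(\f_\rD-\bK^{-1}_\rD\bu_\rD)\cdot\bt$ (the tangential component of the Darcy equation restricted to $\Sigma$). Hence the residual equals $(\bK^{-1}_\rD(\bu_\rD-\bu_{\rD,h}))\cdot\bt-\frac{d(\lambda_h-\lambda)}{d\bt}$, and summing over $e\in\cE_h(\Sigma)$, the first part is controlled by $\|\bu_\rD-\bu_{\rD,h}\|_{0,\Omega_\rD}$ (after a trace/inverse argument, using that the edge residual is a polynomial and the lengths $h_e$ are comparable to neighboring element sizes), while the second part, $\sum_e h_e\|\frac{d(\lambda-\lambda_h)}{d\bt}\|^2_{0,e}$, is bounded by $\|\lambda-\lambda_h\|^2_{1/2,\Sigma}$ by an inverse-type estimate on the $\H^{1/2}$-norm valid for the quasi-uniform interface partitions $\Sigma_h$, $\Sigma_{2h}$ — exactly the type of argument used in \cite{gos2011} and \cite{dgm2015} for analogous interface terms.

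The main obstacle is step (f): handling the fractional $\H^{1/2}(\Sigma)$ norm. Unlike the purely local $\L^2$ arguments in (a)--(e), here one must pass from a sum of weighted $\L^2$-edge norms of a tangential derivative to the global $\H^{1/2}$ seminorm of $\lambda-\lambda_h$, which requires either a global inverse inequality on the interface mesh (valid because of the assumed regularity of $\Sigma_h$ and $\Sigma_{2h}$, together with their bounded-variation property already invoked in Lemma~\ref{lem:reliability-R4}) or a Sobolev interpolation-type estimate between $\L^2(\Sigma)$ and $\H^1(\Sigma)$; keeping the constants independent of $h$ and correctly accounting for the two-scale nature of the partition (edges of $\Sigma_{2h}$ versus those of $\Sigma_h$) is the delicate point. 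The remaining five bounds, by contrast, are routine adaptations of the cited lemmas from \cite{bg2010}, \cite{gos2011}, and \cite{dgm2015}, and I would present them succinctly, indicating only the substitutions of data and the reinsertion of the appropriate exact equation in each case.
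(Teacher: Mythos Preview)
Your proposal is correct and follows the same route as the paper, which itself gives essentially no argument beyond citing \cite[Lemmas~6.1--6.3]{c1997} for (a)--(c), \cite[Lemmas~4.7 and~4.12]{bg2010} for (d) and (e), and \cite[Lemma~5.7]{g2004} for (f) after recording the identities $\nabla p_\rD=\f_\rD-\bK^{-1}_\rD\bu_\rD$ and $\lambda=p_\rD|_\Sigma$. Your bubble-function sketches for (a)--(e) are accurate expansions of those references.

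One technical slip in your plan for (f): you decompose the edge residual as $\bK^{-1}_\rD(\bu_\rD-\bu_{\rD,h})\cdot\bt + \dfrac{d(\lambda-\lambda_h)}{d\bt}$ and then propose to bound $\sum_e h_e\big\|\frac{d(\lambda-\lambda_h)}{d\bt}\big\|^2_{0,e}$ by $\|\lambda-\lambda_h\|^2_{1/2,\Sigma}$ via an inverse-type estimate. That step fails as written, because $\lambda$ is not discrete, so $\frac{d(\lambda-\lambda_h)}{d\bt}$ is not piecewise polynomial and no inverse inequality applies to it in isolation. The standard remedy (and what underlies \cite[Lemma~5.7]{g2004}) is to reverse the order: first apply the $h$-weighted $\L^2$--to--$\H^{-1/2}$ inverse inequality to the \emph{full} residual $\chi:=(\f_\rD-\bK^{-1}_\rD\bu_{\rD,h})\cdot\bt-\frac{d\lambda_h}{d\bt}$, which \emph{is} piecewise polynomial on $\Sigma_h$, obtaining $\sum_e h_e\|\chi\|^2_{0,e}\le C\,\|\chi\|^2_{-1/2,\Sigma}$; then decompose $\chi$ in $\H^{-1/2}(\Sigma)$ and use the boundedness of $\frac{d}{d\bt}:\H^{1/2}(\Sigma)\to\H^{-1/2}(\Sigma)$ to control the $\lambda-\lambda_h$ contribution. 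You correctly identified this as the delicate step; only the order of operations needs adjusting.
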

\begin{proof}
The estimates ${\rm (a)}, {\rm (b)}$, and ${\rm (c)}$ follow straightforwardly from a slight modification of the proof of \cite[Lemmas 6.1, 6.2, and 6.3]{c1997}, respectively, whereas ${\rm (d)}$ and ${\rm (e)}$ follow from \cite[Lemmas 4.7 and 4.12]{bg2010}, respectively.
Finally, the proof of ${\rm (f)}$ relies on the facts that $\nabla p_\rD = \f_\rD - \bK^{-1}_\rD\,\bu_\rD$ in $\Omega_\rD$ (cf. \eqref{eq:Darcy-model}) and $\lambda=p_\rD$ on $\Sigma$.
We omit further details and refer the reader to \cite[Lemma 5.7]{g2004}.
\end{proof}

In order to complete the global efficiency given by \eqref{eq:global-efficiency} (cf. Theorem \ref{th:efficiency}), we now need to estimate the terms $\left\||\bu_\rB|^{\rho-2}\bu_\rB - |\bu_{\rB,h}|^{\rho-2}\bu_{\rB,h}\right\|^{2}_{0,T}$ and $h_e\|\lambda - \lambda_h\|^2_{0,e}$ appearing in the upper bounds provided by Lemmas \ref{lem:efficiency-2}, \ref{lem:efficiency-3}, \ref{lem:efficiency-4}, and \ref{lem:efficiency-5}-(e), respectively. 
In particular, the term $h_e\|\lambda - \lambda_h\|^2_{0,e}$ 
appearing in Lemmas \ref{lem:efficiency-4} and \ref{lem:efficiency-5}-(e), is bounded as follows:
\begin{equation}\label{eq:lambda-bound-H-1-2}
\sum_{e\in\cE_h(\Sigma)} h_e\,\|\lambda - \lambda_h\|^2_{0,e} \,\le\, 
h \, \|\lambda - \lambda_h\|^2_{0,\Sigma} \,\le\, C\,h\,\|\lambda - \lambda_h\|^2_{1/2,\Sigma}\,.
\end{equation}

On the other hand, to bound $\left\||\bu_\rB|^{\rho-2}\bu_\rB - |\bu_{\rB,h}|^{\rho-2}\bu_{\rB,h}\right\|^{2}_{0,T}$,
we proceed as in \cite[eqs. (5.21)--(5.22)]{ce2024} and recall from \cite[Lemma 5.3]{gm1975} that for each $m \ge 2$, there 
exists a constant $C(m) > 0$ such that
\begin{equation}\label{eq:Glowinsky-inequality}
\big| |\bz|^{m-2}\bz - |\by|^{m-2}\by \big|
\,\le\, C(m) \,\big(|\bz| + |\by|\big)^{m-2} |\bz - \by| \quad \forall\, \bz, \, \by \,\in\, \R^2 \,.
\end{equation}
Thus, applying \eqref{eq:Glowinsky-inequality} with $m=\rho$, the H\"older inequality with $p=3/2$ and $q=3$ satisfying $1/p + 1/q=1$,
and simple algebraic manipulations, to obtain
\begin{equation*}
\left\||\bu_\rB|^{\rho-2}\bu_\rB - |\bu_{\rB,h}|^{\rho-2}\bu_{\rB,h}\right\|^{2}_{0,T} 
\,\leq\, c_\rho\,\Big( \|\bu_\rB\|_{0,3(\rho-2);T}^{2(\rho-2)} + \|\bu_{\rB,h}\|_{0,3(\rho-2);T}^{2(\rho-2)}\Big) \|\bu_\rB - \bu_{\rB,h}\|_{0,6;T}^2 \,,
\end{equation*}
with $c_\rho := 2^{2\,\rho - 5}\,C^2(\rho)$.
Then, applying H\"older inequality and some algebraic computations, we find that
\begin{equation}\label{eq:Forch_bound-2}
\begin{array}{l}
\ds \sum_{T\in \cT_h} \left\||\bu_\rB|^{\rho-2}\bu_\rB - |\bu_{\rB,h}|^{\rho-2}\bu_{\rB,h}\right\|^{2}_{0,T} \\[2ex]
\ds\quad \,\leq\, c_\rho\,\left\{ \sum_{T\in \cT^\rB_h} \Big( \|\bu_\rB\|_{0,3(\rho-2);T}^{2(\rho-2)} 
+ \|\bu_{\rB,h}\|_{0,3(\rho-2);T}^{2(\rho-2)}\Big)^{3/2} \right\}^{2/3} \left\{ \sum_{T\in \cT^\rB_h} \|\bu_\rB - \bu_{\rB,h}\|_{0,6;T}^6 \right\}^{1/3} \\[4ex]
\ds\quad 
\,\leq\, \sqrt[3]{2}\,c_\rho\, \Big( \|\bu_\rB\|_{0,3(\rho-2);\Omega_\rB}^{2(\rho-2)} + \|\bu_{\rB,h}\|_{0,3(\rho-2);\Omega_\rB}^{2(\rho-2)}\Big) \|\bu_\rB - \bu_{\rB,h}\|_{0,6;\Omega_\rB}^2 \,.
\end{array}
\end{equation}
Then, using the continuous injections $\bi_6: \bH^1(\Omega_\rB)\to \bL^6(\Omega_\rB)$
and $\bi_{3(\rho-2)}:\bH^1(\Omega_\rB)\to \bL^{3(\rho-2)}(\Omega_\rB)$, with $3(\rho-2)\in [3,6]$ (cf. \eqref{eq:Sobolev-inequality}), and 
the fact that $\|\bu_\rB\|_{1,\Omega_\rB}$ and $\|\bu_{\rB,h}\|_{1,\Omega_\rB}$ are bounded by data (cf. \eqref{eq:a-priori-bound-u}, \eqref{eq:a-priori-bound-uh}),
we deduce from \eqref{eq:Forch_bound-2} that there exists a constant $C>0$, 
depending only on data and other constants, and hence independent of $h$, such that
\begin{equation}\label{eq:Forch_bound-3}
\sum_{T\in\cT^\rB_h} \big\| |\bu_\rB|^{\rho-2}\bu_\rB - |\bu_{\rB,h}|^{\rho-2}\bu_{\rB,h} \big\|_{0,T}^2 
\,\leq\, C\,\|\bu_\rB - \bu_{\rB,h}\|_{1,\Omega_\rB}^2 \,.
\end{equation}

Consequently, it is not difficult to see that \eqref{eq:global-efficiency} follows 
from the definition of $\Theta_{\tBFD}$ (cf. \eqref{eq:global-estimator}),
Lemmas \ref{lem:efficiency-1}, \ref{lem:efficiency-2}, \ref{lem:efficiency-3}, and \ref{lem:efficiency-4}, and the estimates \eqref{eq:lambda-bound-H-1-2} and  \eqref{eq:Forch_bound-3}.


\section{Numerical results}\label{sec:numerical-results}

This section serves to illustrate the performance and accuracy of the mixed finite element 
scheme \eqref{eq:discrete-mixed-formulation} along with the reliability and efficiency 
properties of the {\it a posteriori} error estimator $\Theta_{\tBFD}$ (cf. \eqref{eq:global-estimator}) 
derived in Section \ref{sec:a-posteriori-error-analysis}. 
The implementation is based on a {\tt FreeFEM} code \cite{Hecht2012}. Regarding the 
implementation of the Newton iterative method associated to \eqref{eq:discrete-mixed-formulation} 
(see \cite[eq. (5.1) in Section 5]{cd2023} for details), the iterations are 
terminated once the relative error of the entire coefficient vectors between two consecutive 
iterates, say $\coeff^{m}$ and $\coeff^{m+1}$, is sufficiently small, i.e.,
\begin{equation*}
\frac{\|\coeff^{m+1} - \coeff^m\|_{\ell^2}}{\|\coeff^{m+1}\|_{\ell^2}} \,\leq\, \tol,
\end{equation*}
where $\|\cdot\|_{\ell^2}$ is the standard $\ell^2$-norm in $\R^{\DOF}$, 
with $\DOF$ denoting the total number of degrees of
freedom defining the finite element subspaces $\bH_h$ and $\bQ_h$ (cf. \eqref{eq:FEM-1}--\eqref{eq:FEM-2}), 
and $\tol$ is a fixed tolerance chosen as $\tol=1\textup{E}-06$.

The global error and the effectivity index associated to the global estimator
$\Theta_{\tBFD}$ are denoted, respectively, by
\begin{equation*}
\re(\vec{\bsi}) \,:=\, \re(\bu_\rB) + \re(\bu_\rD) + \re(p_\rB) + \re(p_\rD) + \re(\lambda) \qan
\eff(\Theta_{\tBFD}) \,:=\, \frac{\re(\vec{\bsi})}{\Theta_{\tBFD}} \,,
\end{equation*}
where
\begin{equation*}
\begin{array}{c}
\re(\bu_\rB) \,:=\, \|\bu_\rB - \bu_{\rB,h}\|_{1,\Omega_\rB}\,,\quad 
\re(\bu_\rD) \,:=\, \|\bu_\rD - \bu_{\rD,h}\|_{\div;\Omega_\rD} \,, \\ [2ex]
\re(p_\rB) \,:=\, \|p_\rB - p_{\rB,h}\|_{0,\Omega_\rB}\,,\quad
\re(p_\rD) \,:=\, \|p_\rD - p_{\rD,h}\|_{0,\Omega_\rD}\,,\quad 
\re(\lambda) \,:=\, \|\lambda - \lambda_h\|_{1/2,\Sigma} \,.
\end{array}
\end{equation*}
Notice that, for ease of computation, the interface norm
$\|\lambda - \lambda_h\|_{1/2,\Sigma}$ will be replaced by $\|\lambda - \lambda_h\|_{(0,1),\Sigma}$ with
\begin{equation*}
\|\xi\|_{(0,1),\Sigma} \,:=\, \|\xi\|^{1/2}_{0,\Sigma} \, \|\xi\|^{1/2}_{1,\Sigma} 
\quad \forall\,\xi\in \H^1(\Sigma) \,,
\end{equation*}
owing to the fact that $\H^{1/2}(\Sigma)$ is the interpolation space with index $1/2$ between $\H^1(\Sigma)$ and $\L^2(\Sigma)$.

Moreover, using the fact that $\DOF^{-1/2}\cong h$, the respective experimental 
rates of convergence are computed as
\begin{equation*}
\sr(\diamond) \,:=\, -2\,\frac{\log(\re(\diamond)/\re'(\diamond))}{\log(\DOF/\DOF')} \quad 
\mbox{for each } \diamond\in\Big\{ \bu_\rB, \bu_\rD, p_\rB, p_\rD, \lambda, \vec{\bsi} \Big\}\,,
\end{equation*}
where $\DOF$ and $\DOF'$ denote the total degrees of freedom associated to 
two consecutive triangulations with errors $e$ and $e'$, respectively. 

For each example shown below we take $\bu^0_{\rB,h}=(0.1,0)^\rt$ as initial guess.
In addition, the condition $(p_h,1)_{\Omega} = 0$ is imposed via a penalization strategy.

Example 1 is used to show the accuracy of the method and the behavior of the effectivity index of the {\it a posteriori} error estimator $\Theta_{\tBFD}$, whereas Examples 2 and 3 are utilized 
to illustrate the associated adaptive algorithm, with and without manufactured solution,
respectively.
The corresponding adaptivity procedure, taken from \cite{Verfurth}, is described as follows:
\begin{enumerate}
\item[(1)] Start with a coarse mesh $\cT_h := \cT^\rB_h\cup\cT^\rD_h$.
\item[(2)] Solve the Newton iterative method \cite[eq. (5.1) in Section 5]{cd2023} for the current mesh $\cT_h$.
\item[(3)] Compute the local indicator $\Theta_{\tBFD,T}$ for each $T\in\cT_h:=\cT^\rB_h\cup \cT^\rD_h$, where 
\begin{equation*}
\Theta_{\tBFD,T} \,:=\, \Big\{ \Theta^2_{\rB,T} + \Theta^2_{\rD,T} \Big\}^{1/2} \,,\quad \text{ (cf. \eqref{eq:local-estimator-B}, \eqref{eq:local-estimator-D})}
\end{equation*}
\item[(4)] Check the stopping criterion and decide whether to finish or go to next step.
\item[(5)] 
Use the automatic meshing algorithm {\tt adaptmesh} from \cite[Section 9.1.9]{Hecht2018} to refine each $T'\in \cT_h$ satisfying:
\begin{equation*}
\Theta_{\tBFD,T'} \,\geq\, C_{\sf adt}\,\frac{1}{\#\,T} \sum_{T\in \cT_h} \Theta_{\tBFD,T},\quad \mbox{for some }\, C_{\sf adt}\in (0,1),
\end{equation*}
where $\#\,T$ denotes the number of triangles of the mesh $\cT_h$.
In particular, in Examples 2 and 3 below we take $C_{\sf adt} = 0.8$. 
	
\item[(6)] Define resulting meshes as current meshes $\cT^\rB_h$ and $\cT^\rD_h$, and go to step (2).
\end{enumerate}

\subsection*{Example 1: Accuracy assessment with a smooth solution in a rectangular domain.}

In the first example, we consider the rectangular domain $\Omega=\Omega_\rB\cup\Sigma\cup \Omega_\rD$, where
\begin{equation*}
\Omega_\rB := (0,1)\times(1,2)\,,\quad \Omega_\rD := (0,1)^2\,,\qan \Sigma := (0,1)\times\{1\} \,.
\end{equation*} 
We consider the model parameter $\rho=3$, $\mu=1$, $\tF=10$, $\bK_\rB=\bbI$, $\bK_\rD=0.5\times\bbI$,
and the data $\f_\rB, \f_\rD$, and $g_\rD$ are chosen so that the exact solution in the rectangular 
domain $\Omega = \Omega_\rB\cup\Sigma\cup\Omega_\rD$ is given by the smooth functions
\begin{equation*}
\begin{array}{c}
\bu_\rB(x_1,x_2) := \left(\begin{array}{r}
-\sin(\pi x_1)\cos(\pi x_2) \\ \cos(\pi x_1)\sin(\pi x_2)
\end{array}\right),\quad 
\bu_\rD(x_1,x_2) := \left(\begin{array}{r}
\sin(\pi x_1)\,\exp(x_2) \\ \exp(x_1)\,\sin(\pi x_2)  
\end{array}\right),\\ [3ex]
p_\star(x_1,x_2) := x_1\cos(\pi x_2) \qin \Omega_\star,\quad \mbox{ with } \star\in\{\rB,\rD\}.
\end{array}
\end{equation*}
Notice that this solution satisfies $\bu_\rB\cdot\bn = \bu_\rD\cdot\bn$ on $\Sigma$. 
However, the second transmission condition in \eqref{eq:transmission-condition} is not satisfied, 
whereas the Dirichlet boundary condition for the Brinkman--Forchheimer velocity on $\Gamma_\rB$ and 
the Neumann boundary condition for the Darcy velocity on $\Gamma_\rD$ are both 
non-homogeneous. 
This introduces additional contributions that are incorporated into the right-hand side of the resulting system and into the {\it a posteriori} error estimator $\Theta_{\tBFD}$ (cf. \eqref{eq:global-estimator}).
The errors and associated rates of convergence are reported in Table \ref{table1-example1},
which align with the theoretical bounds established in \cite[Theorem 4.6]{cd2023}.
Additionally, we compute the global {\it a posteriori} error indicator $\Theta_{\tBFD}$ 
and assess its reliability and efficiency using the effectivity index. 
Note that the estimator decreases in such a way that the effectivity index remains consistently bounded.

\begin{table}[ht]
\begin{center}
\small{		
\begin{tabular}{r|c||c|c|c|c||c||c|c|c|c}
\hline
$\DOF$  & $h_\rB$ & $\re(\bu_\rB)$ & $\sr(\bu_\rB)$ & $\re(p_\rB)$ & $\sr(p_\rB)$ & $h_\rD$ & $\re(\bu_\rD)$ & $\sr(\bu_\rD)$ & $\re(p_\rD)$ & $\sr(p_\rD)$ \\  \hline \hline
258    & 0.373 & 5.6E-01 &  --   & 2.5E-01 &   --  & 0.373 & 1.2E-00 &   --  & 1.0E-01 &   --  \\ 
1016   & 0.196 & 2.6E-01 & 1.120 & 8.3E-02 & 1.627 & 0.196 & 5.5E-01 & 1.115 & 4.2E-02 & 1.278 \\ 
3784   & 0.103 & 1.3E-01 & 1.071 & 3.6E-02 & 1.279 & 0.103 & 2.7E-01 & 1.072 & 1.9E-02 & 1.189 \\ 
14868  & 0.057 & 6.6E-02 & 0.972 & 1.7E-02 & 1.080 & 0.051 & 1.4E-01 & 0.970 & 9.9E-03 & 0.984 \\ 
58822  & 0.027 & 3.2E-02 & 1.044 & 8.9E-03 & 0.935 & 0.027 & 6.9E-02 & 1.015 & 4.9E-03 & 1.012 \\ 
235922 & 0.014 & 1.6E-02 & 1.001 & 4.2E-03 & 1.071 & 0.014 & 3.5E-02 & 1.004 & 2.5E-03 & 1.002 \\ 
\hline 
\end{tabular}
			
\medskip
	
\begin{tabular}{c||c|c||c|c||c|c||c}
\hline
$h_\Sigma$ & $\re(\lambda)$ & $\sr(\lambda)$ & $\re(\vec{\bsi})$ & $\sr(\vec{\bsi})$ & $\Theta_{\tBFD}$ & $\eff(\Theta_{\tBFD})$ & iter \\  \hline \hline
1/2  & 1.4E-01 &   --  & 1.3E-00 &   --  & 5.3E-00 & 0.251 & 5 \\ 
1/4  & 2.8E-02 & 2.367 & 6.1E-01 & 1.137 & 2.5E-00 & 0.248 & 5 \\ 
1/8  & 7.5E-03 & 1.987 & 3.0E-01 & 1.077 & 1.2E-00 & 0.243 & 5 \\ 
1/16 & 2.1E-03 & 1.891 & 1.6E-01 & 0.972 & 6.2E-01 & 0.251 & 5 \\ 
1/32 & 6.2E-04 & 1.751 & 7.7E-02 & 1.019 & 3.1E-01 & 0.248 & 5 \\ 
1/64 & 1.4E-04 & 2.171 & 3.8E-02 & 1.004 & 1.6E-01 & 0.247 & 5 \\ 
\hline 
\end{tabular}
\caption{[{\sc Example 1}] $\bBR - \bRT_0 - \rP_0 - \rP_0$ primal-mixed scheme with quasi-uniform refinement.}
\label{table1-example1}
}
\end{center}
\end{table}

\subsection*{Example 2: Adaptivity in a 2D helmet-shaped domain.}

In the second example, similarly to \cite[Example 2, Section 6]{cgos2021}, 
we consider a 2D helmet-shaped domain. Specifically, the domain is defined 
as $\Omega = \Omega_\rB \cup \Sigma \cup \Omega_\rD$, where
\begin{equation*}
\Omega_\rB := (-1,1)\times(0,1.25)\backslash (-0.75,0.75)\times(0.25,1.25)\,,\quad
\Omega_\rD := (-1,1)\times(-0.5,0) \,,
\end{equation*}
and $\Sigma := (-1,1)\times \{0\}$ (see the first plot of Figure \ref{figure:Ex2-domain-solutions} below).
We use the model parameters $\rho=3.5$, $\mu=1$, $\tF=10$, $\bK_{\rB}=\bbI$, 
and $\bK_{\rD}=10^{-1}\times\bbI$. The data $\f_\rB, \f_\rD$, and $g_\rD$ are adjusted so that the 
exact solution in the $2$D helmet-shaped domain $\Omega$ is given by the smooth functions
\begin{equation*}
\begin{array}{c}
\ds \bu_\rB(x_1,x_2) := \left(\begin{array}{r}
\dfrac{(x_2 - 0.26)}{r_1(x_1,x_2)} + \dfrac{(x_2 - 0.26)}{r_2(x_1,x_2)} \\ [3ex] 
-\dfrac{(x_1 + 0.74)}{r_1(x_1,x_2)} - \dfrac{(x_1 - 0.74)}{r_2(x_1,x_2)}
\end{array}\right),\quad
\bu_\rD(x_1,x_2) := \left(\begin{array}{r}
\sin(\pi x_1)\,x_2 \\ x_1\,\sin(\pi x_2)  
\end{array}\right),\\ [7ex]
\ds \,\,p_\star(x_1,x_2) := \sin(\pi x_1)\,x_2 \qin \Omega_\star,\quad \mbox{ with } \star\in \big\{\rB,\rD\big\},
\end{array}
\end{equation*}
where 
\begin{equation*}
r_1(x_1,x_2) := \sqrt{(x_1 + 0.74)^2 + (x_2 - 0.26)^2} \qan 
r_2(x_1,x_2) := \sqrt{(x_1 - 0.74)^2 + (x_2 - 0.26)^2}.
\end{equation*}
Tables \ref{table:ex2-uniform} and \ref{table:ex2-adaptive} along with 
Figure \ref{fig:Ex2-loglog-errors}, summarize the convergence history of the method applied to a sequence of quasi-uniform and adaptively refined triangulations of the domain. Suboptimal rates are observed in the first case, whereas adaptive refinement based on the {\it a posteriori} error indicator $\Theta_\tBFD$ (cf. \eqref{eq:global-estimator}) yields optimal convergence and stable effectivity indexes. Notice how the adaptive algorithms improve the method's efficiency by providing high-quality solutions at a lower computational cost. For instance, it is possible to achieve a better result (in terms of $\re(\vec{\bsi})$) with approximately only $2.6\%$ of the degrees of freedom compared to the last quasi-uniform mesh for the mixed scheme. Furthermore, Figure \ref{figure:Ex2-domain-solutions} shows the domain configuration in the initial mesh, the second component of the velocity, and the pressure field over the entire domain, computed using the adaptive $\bBR - \bRT_0 - \rP_0 - \rP_0$ primal-mixed scheme (via $\Theta_\tBFD$) with $374,444$ degrees of freedom and $116,018$ triangles. We observe that the second component of the velocity exhibits high gradients near the vertices $(-0.75,0.25)$ and $(0.75,0.25)$. Examples of some adapted meshes are shown in Figure \ref{figure:Ex2-adapted-meshes}, where a clear clustering of elements near the vertices in $\Omega_\rB$ of the 2D helmet-shaped domain is observed, as expected.

\begin{table}[ht]
\begin{center}
\small{		
\begin{tabular}{r|c||c|c|c|c||c||c|c|c|c}
\hline
$\DOF$  & $h_\rB$ & $\re(\bu_\rB)$ & $\sr(\bu_\rB)$ & $\re(p_\rB)$ & $\sr(p_\rB)$ & $h_\rD$ & $\re(\bu_\rD)$ & $\sr(\bu_\rD)$ & $\re(p_\rD)$ & $\sr(p_\rD)$ \\  \hline \hline
918    & 0.188 & 2.9E-00 &   --  & 1.4E-00 &   --  & 0.200 & 1.6E-01 &   --  & 3.7E-02 &   --  \\ 
3686   & 0.100 & 2.1E-00 & 0.452 & 7.3E-01 & 0.911 & 0.095 & 8.1E-02 & 1.017 & 1.4E-02 & 1.359 \\ 
13670  & 0.050 & 1.6E-00 & 0.477 & 4.6E-01 & 0.710 & 0.049 & 4.1E-02 & 1.024 & 7.0E-03 & 1.076 \\ 
54755  & 0.026 & 8.9E-01 & 0.801 & 3.1E-01 & 0.568 & 0.026 & 2.0E-02 & 1.018 & 3.5E-03 & 0.990 \\ 
213836 & 0.014 & 5.2E-01 & 0.776 & 1.6E-01 & 0.994 & 0.015 & 1.0E-02 & 1.024 & 1.7E-03 & 1.071 \\ 
857896 & 0.007 & 2.7E-01 & 0.931 & 8.7E-02 & 0.848 & 0.007 & 5.1E-03 & 0.986 & 8.5E-04 & 0.994 \\ 
\hline 
\end{tabular}

\medskip

\begin{tabular}{c||c|c||c|c||c|c||c}
\hline
$h_\Sigma$ & $\re(\lambda)$ & $\sr(\lambda)$ & $\re(\vec{\bsi})$ & $\sr(\vec{\bsi})$ & $\Theta_{\tBFD}$ & $\eff(\Theta_{\tBFD})$ & iter \\  \hline \hline
1/4   & 1.2E-01 &   --  & 3.2E-00 &   --  & 2.5E+01 & 0.129 & 5 \\ 
1/8   & 3.3E-02 & 1.847 & 2.2E-00 & 0.518 & 1.9E+01 & 0.120 & 5 \\ 
1/16  & 1.1E-02 & 1.760 & 1.6E-00 & 0.499 & 1.0E+01 & 0.162 & 5 \\ 
1/32  & 5.9E-03 & 0.831 & 9.4E-01 & 0.779 & 5.6E-00 & 0.168 & 5 \\ 
1/64  & 1.2E-03 & 2.288 & 5.5E-01 & 0.797 & 3.4E-00 & 0.161 & 5 \\ 
1/128 & 5.5E-04 & 1.185 & 2.9E-01 & 0.924 & 1.7E-00 & 0.166 & 5 \\ 
\hline 
\end{tabular}
\caption{[{\sc Example 2}] $\bBR - \bRT_0 - \rP_0 - \rP_0$ primal-mixed scheme with quasi-uniform refinement.}\label{table:ex2-uniform}
}
\end{center}
\end{table}
%
\begin{table}[ht]
\begin{center}
\small{	
\begin{tabular}{r||c|c|c|c|c|c|c|c}
\hline
$\DOF$  &  $\re(\bu_\rB)$ & $\sr(\bu_\rB)$ & $\re(p_\rB)$ & $\sr(p_\rB)$ & $\re(\bu_\rD)$ & $\sr(\bu_\rD)$ & $\re(p_\rD)$ & $\sr(p_\rD)$ \\  \hline \hline
918    & 2.9E-00 &   --  & 1.4E-00 &  --   & 1.64E-01 &   --  & 3.7E-02 &  --   \\ 
1555   & 1.6E-00 & 2.239 & 2.7E-00 &  --   & 1.63E-01 & 0.022 & 2.6E-00 &  --   \\ 
2568   & 9.3E-01 & 2.172 & 5.9E-01 & 6.012 & 1.3E-01 & 0.812 & 4.0E-01 & 7.420 \\ 
4388   & 5.1E-01 & 2.274 & 1.9E-01 & 4.255 & 1.0E-01 & 0.997 & 2.8E-02 & 9.882 \\ 
7459   & 3.8E-01 & 1.124 & 1.2E-01 & 1.811 & 7.8E-02 & 0.991 & 4.0E-02 &  --   \\ 
12893  & 2.9E-01 & 0.930 & 8.2E-02 & 1.299 & 5.7E-02 & 1.182 & 9.2E-03 & 5.373 \\ 
22815  & 2.2E-01 & 1.063 & 5.6E-02 & 1.352 & 4.5E-02 & 0.825 & 9.7E-03 &  --   \\ 
39676  & 1.6E-01 & 1.018 & 4.2E-02 & 1.011 & 3.2E-02 & 1.258 & 5.4E-03 & 2.113 \\ 
69933  & 1.2E-01 & 1.027 & 3.1E-02 & 1.070 & 2.5E-02 & 0.872 & 4.0E-03 & 1.013 \\ 
121728 & 9.2E-02 & 1.019 & 2.3E-02 & 1.041 & 1.8E-02 & 1.152 & 3.0E-03 & 1.106 \\ 
213431 & 7.0E-02 & 0.966 & 1.8E-02 & 1.024 & 1.4E-02 & 0.903 & 2.3E-03 & 0.923 \\ 
374444 & 5.3E-02 & 1.020 & 1.3E-02 & 1.015 & 1.0E-02 & 1.094 & 1.7E-03 & 1.042 \\ 
\hline 
\end{tabular}
		
\medskip
		
\begin{tabular}{c|c||c|c||c|c||c}
\hline
$\re(\lambda)$ & $\sr(\lambda)$ & $\re(\vec{\bsi})$ & $\sr(\vec{\bsi})$ & $\Theta_{\tBFD}$ & $\eff(\Theta_{\tBFD})$ &  iter \\  \hline \hline
1.2E-01 &   --  & 3.2E-00 &   --  & 2.5E+01 & 0.129 & 5 \\ 
3.8E-00 &   --  & 5.5E-00 &   --  & 1.1E+01 & 0.525 & 4 \\ 
5.9E-01 & 7.406 & 1.3E-00 & 5.714 & 5.4E-00 & 0.245 & 4 \\ 
4.9E-02 & 9.271 & 5.5E-01 & 3.247 & 3.6E-00 & 0.156 & 5 \\ 
6.7E-02 &   --  & 4.1E-01 & 1.139 & 2.7E-00 & 0.153 & 5 \\ 
1.2E-02 & 6.168 & 3.1E-01 & 1.030 & 2.0E-00 & 0.152 & 5 \\      
1.7E-02 &   --  & 2.3E-01 & 1.065 & 1.5E-00 & 0.150 & 5 \\ 
5.1E-03 & 4.421 & 1.7E-01 & 1.037 & 1.1E-00 & 0.151 & 5 \\ 
2.5E-03 & 2.523 & 1.3E-01 & 1.025 & 8.5E-01 & 0.150 & 5 \\ 
2.0E-03 & 0.865 & 9.6E-02 & 1.025 & 6.4E-01 & 0.150 & 5 \\ 
1.4E-03 & 1.313 & 7.3E-02 & 0.967 & 4.9E-01 & 0.150 & 5 \\ 
7.0E-04 & 2.352 & 5.5E-02 & 1.023 & 3.7E-01 & 0.150 & 5 \\ 
\hline 
\end{tabular}
\caption{[{\sc Example 2}] $\bBR - \bRT_0 - \rP_0 - \rP_0$ primal-mixed scheme with adaptive refinement via $\Theta_{\tBFD}$.}\label{table:ex2-adaptive}
}
\end{center}
\end{table}

\begin{figure}[t]
\begin{center}
\includegraphics[width=8.4cm]{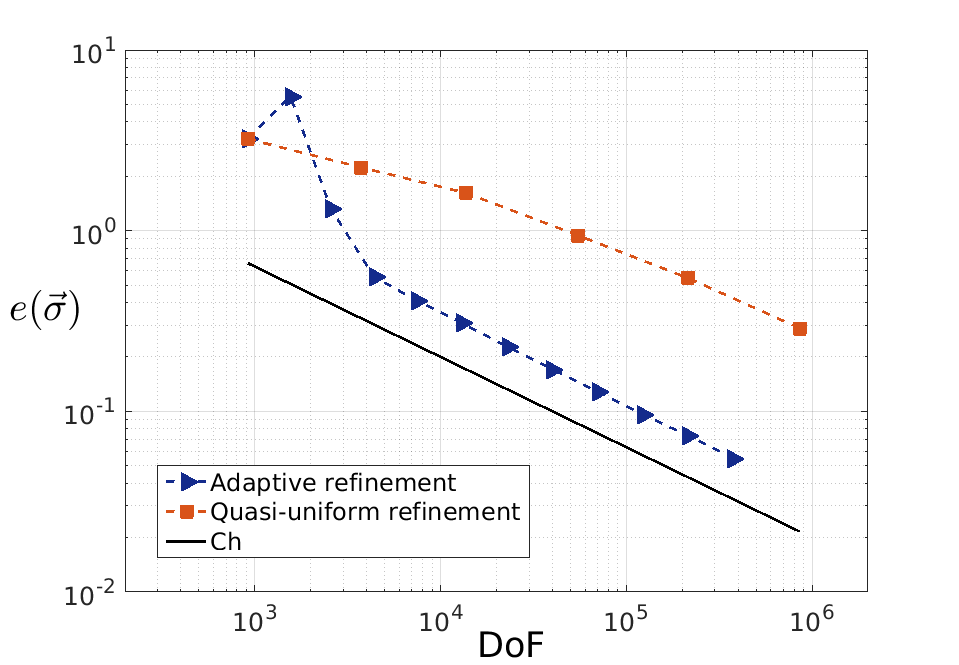}
		
\caption{[{\sc Example 2}] Log-log plot of $\re(\vec{\bsi})$ vs. ${\DOF}$ for quasi-uniform/adaptive refinements.}\label{fig:Ex2-loglog-errors}
\end{center}
\end{figure}

\begin{figure}[ht!]
\begin{center}
\includegraphics[width=5.41cm]{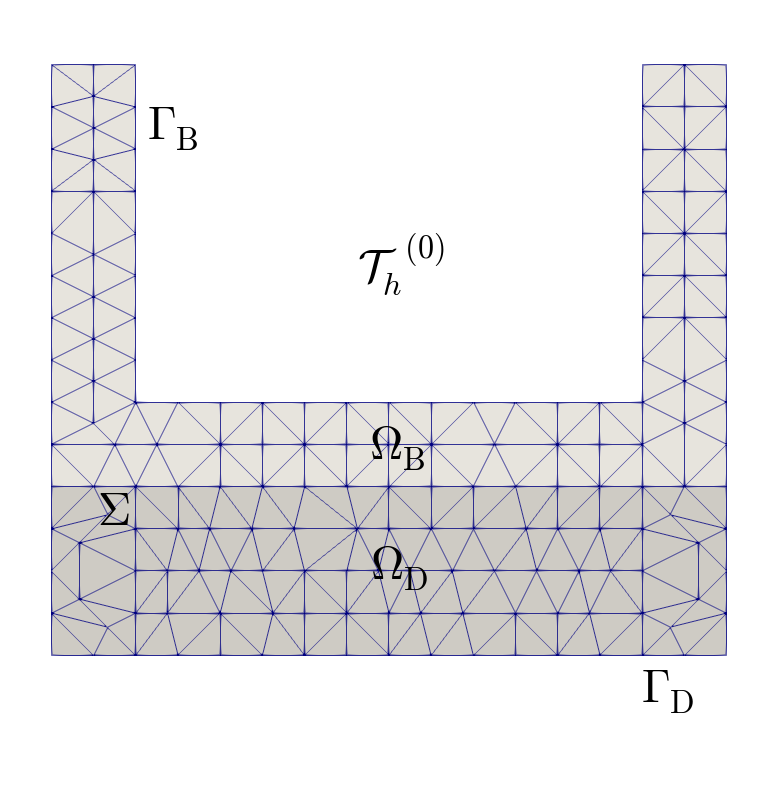}\quad
\includegraphics[width=5.41cm]{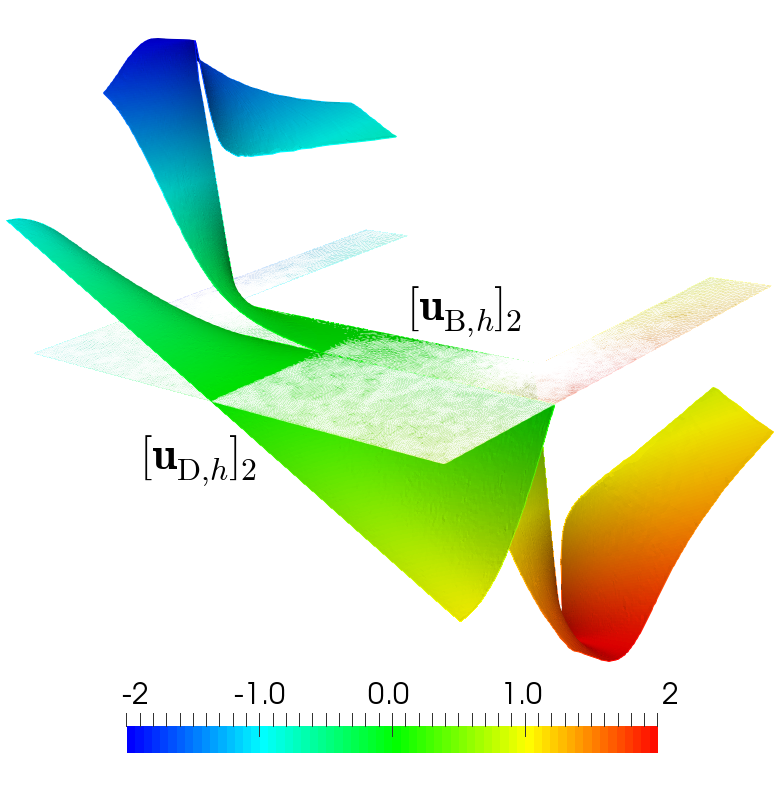}\quad	
\includegraphics[width=5.41cm]{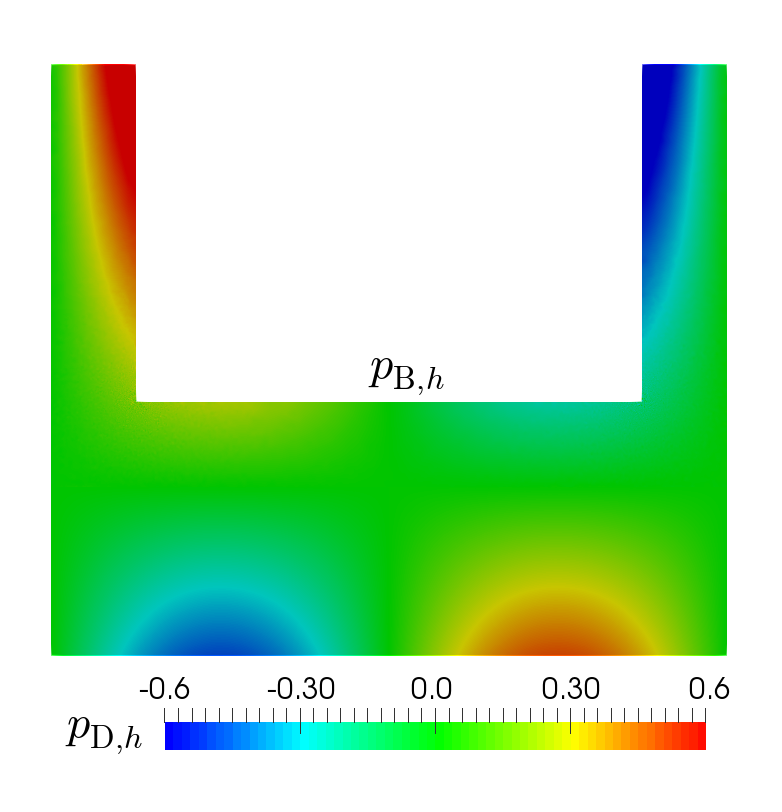}	
		
\vspace{-0.1cm}
		
\caption{[{\sc Example 2}] Initial mesh, second velocity component and pressure field in the whole domain.}\label{figure:Ex2-domain-solutions}
\end{center}
\end{figure}

\begin{figure}[t]
\begin{center}
\includegraphics[width=5.55cm]{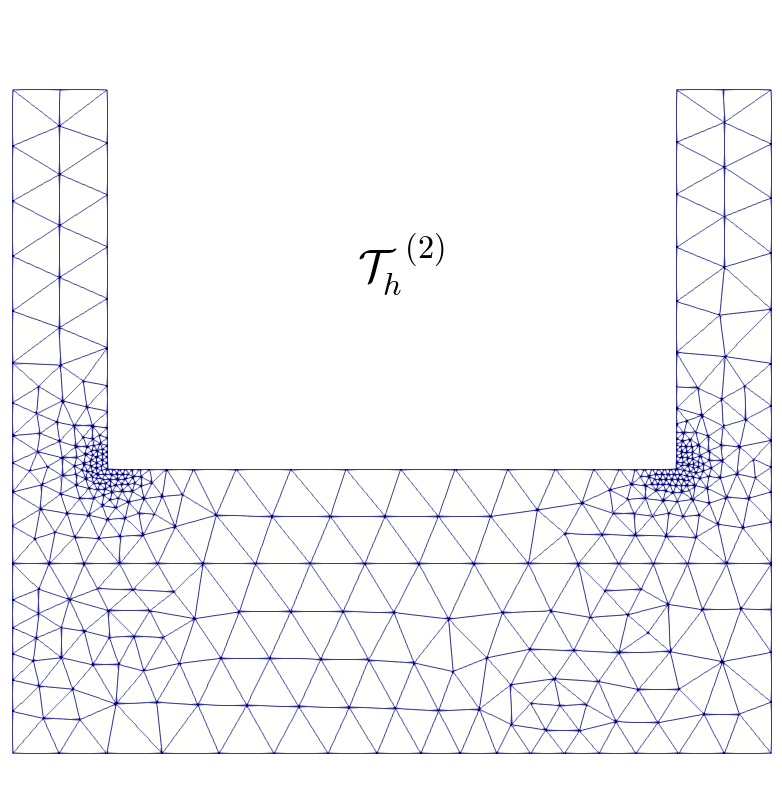}
\includegraphics[width=5.55cm]{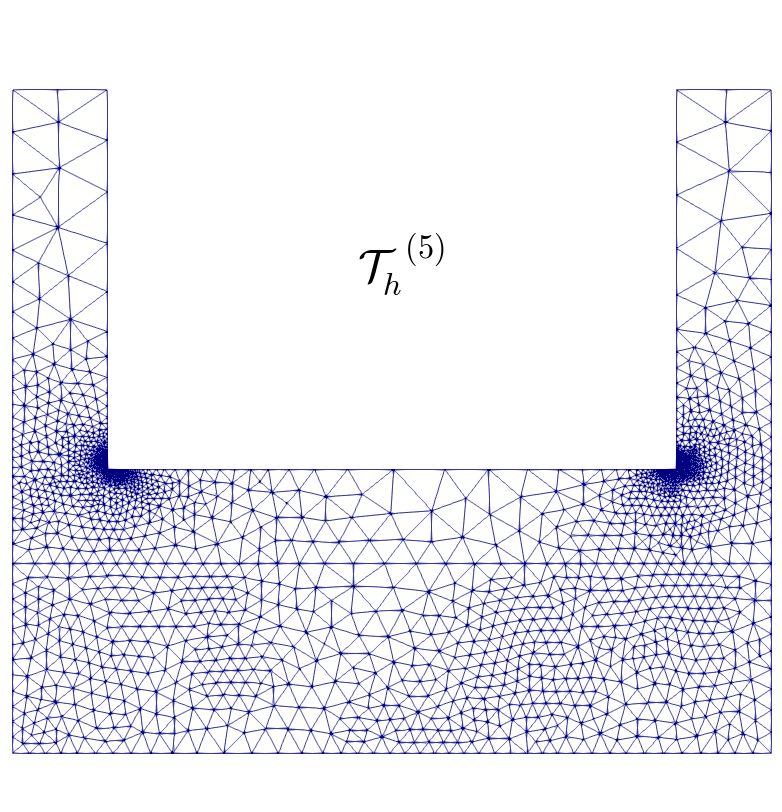}
\includegraphics[width=5.55cm]{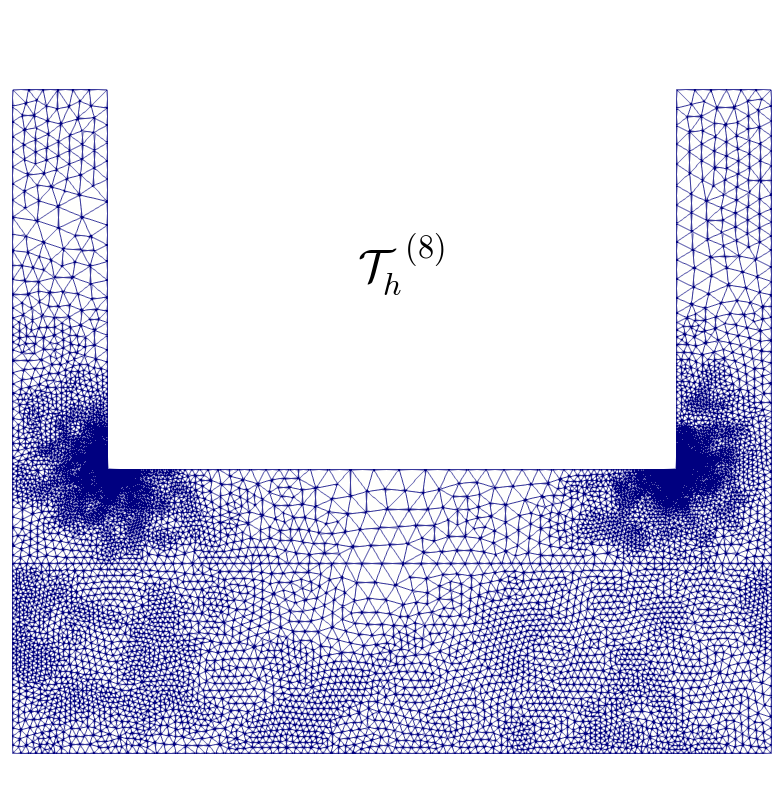}
		
\vspace{-0.1cm}
\caption{[{\sc Example 2}] Three snapshots of adapted meshes according to the indicator $\Theta_{\tBFD}$.}\label{figure:Ex2-adapted-meshes}
\end{center}
\end{figure}

\subsection*{Example 3: Adaptivity for flow through a heterogeneous porous media.}

Inspired by \cite[Example 2 in Section 5]{cd2023}, we finally focus on studying the behavior of the coupled Brinkman--Forchheimer/Darcy equations for modeling fluid flow through a heterogeneous porous medium, considering $\rho=4$ to incorporate the higher-order inertial correction $\tF\,|\bu_\rB|^2\,\bu_\rB$.
The domain is defined by the square $\Omega=\Omega_{\rB}\cup \Sigma\cup \Omega_{\rD}$,
where 
\begin{equation*}
\Omega_\rB :=(0,2)\times (0,1),\quad \Sigma:=(0,2)\times \{0\},\qan \Omega_\rD:=(0,2)\times (-1,0)\,,
\end{equation*}
with boundaries $\Gamma_\rB = \Gamma_{\rB,\textrm{left}}\cup \Gamma_{\rB,\textrm{top}}\cup \Gamma_{\rB,\textrm{right}}$ and $\Gamma_\rD=\Gamma_{\rD,\textrm{left}}\cup \Gamma_{\rD,\textrm{bottom}}\cup \Gamma_{\rD,\textrm{right}}$, respectively (see the first plot of Figure \ref{figure:Ex3-domain-solutions} below). 
The problem parameters are $\mu=1$, $\tF=10^4$, $\bK_{\rB}=10^{-1}\times\bbI$ and $\bK_{\rD}=10^{-3}\times\bbI$.
In turn, the right-hand side data $\f_\rB, \f_{\rD}$, and $g_\rD$ are chosen as zero, and the boundary conditions are
\begin{equation*}
\begin{array}{c}
\ds \bu_\rB = (-10\,x_2\,(x_2 - 1), 0)^\rt  \qon \Gamma_{\rB,\textrm{left}}\,,\quad
\bu_{\rB} = \0 \qon \Gamma_{\rB,\textrm{top}} \,,\quad
\bsi_\rB\bn = \0 \qon \Gamma_{\rB,\textrm{right}} \,, \\[2ex]
\ds p_\rD = 0 \qon \Gamma_{\rD,\textrm{bottom}} \,,\quad
\bu_\rD\cdot\bn = 0 \qon \Gamma_{\rD,\textrm{left}}\cup \Gamma_{\rD,\textrm{right}} \,.
\end{array}
\end{equation*}
In Table \ref{table:ex3-adaptive}, we compute the rate of convergence of the global estimator $\Theta_{\tBFD}$ for eight adapted meshes. Although the effectivity index cannot be computed for this example, the results demonstrate optimal convergence, confirming the reliability and efficiency of the estimator, as well as the accuracy of the obtained approximation.
In Figure \ref{figure:Ex3-domain-solutions}, we plot the second component of the velocity and the magnitude of the velocity over the entire domain, computed using the $\bBR - \bRT_0 - \rP_0 - \rP_0$ primal-mixed scheme on a mesh with $57,006$ triangular elements (corresponding to $196,733\,\DOF$) obtained via $\Theta_{\tBFD}$. As expected, most of the flow moves from left to right through the more permeable Brinkman--Forchheimer domain, while part of it is diverted into the less permeable Darcy region due to the zero pressure at the bottom of the domain. The continuity of the normal velocity across the interface is preserved, illustrating mass conservation on $\Sigma$.
Additionally, Figure \ref{figure:Ex3-adapted-meshes} shows snapshots of some adapted meshes generated using $\Theta_{\tBFD}$. We observe appropriate refinement in regions with higher velocity, indicating that the estimator $\Theta_{\tBFD}$ effectively captures challenging model parameters and localizes areas where the solutions exhibit greater complexity. This ensures computational resources are efficiently utilized where they are most needed.

\begin{table}[ht]
\begin{center}   
\small{		
\begin{tabular}{c||c|c|c|c|c|c|c|c} \hline
${\DOF}$       & 1807 & 4207 & 10080 & 17800 & 31928 & 57275 & 105724 & 196733 \\ 
${\tt it}$     & 8 & 8 & 8 & 8 & 8 & 8 & 8 & 8 \\ \hline\hline
$\Theta_\tBFD$ & 6.0E+03 & 2.2E+03 & 1.4E+03 & 1.0E+03 & 7.9E+02 & 6.0E+02 & 4.5E+02 & 3.4E+02 \\ 
$\sr(\Theta_\tBFD)$ & -- & 2.353 & 1.125 & 0.955 & 0.906 & 0.932 & 0.923 & 0.978 \\ \hline
\end{tabular}
}
\caption{[{\sc Example 3}] Number of degrees of freedom, Newton iteration count, global estimator, and rate of convergence of the global estimator.}\label{table:ex3-adaptive}	
\end{center}
\end{table}

\begin{figure}[ht!]
\begin{center}
\includegraphics[width=6.3cm]{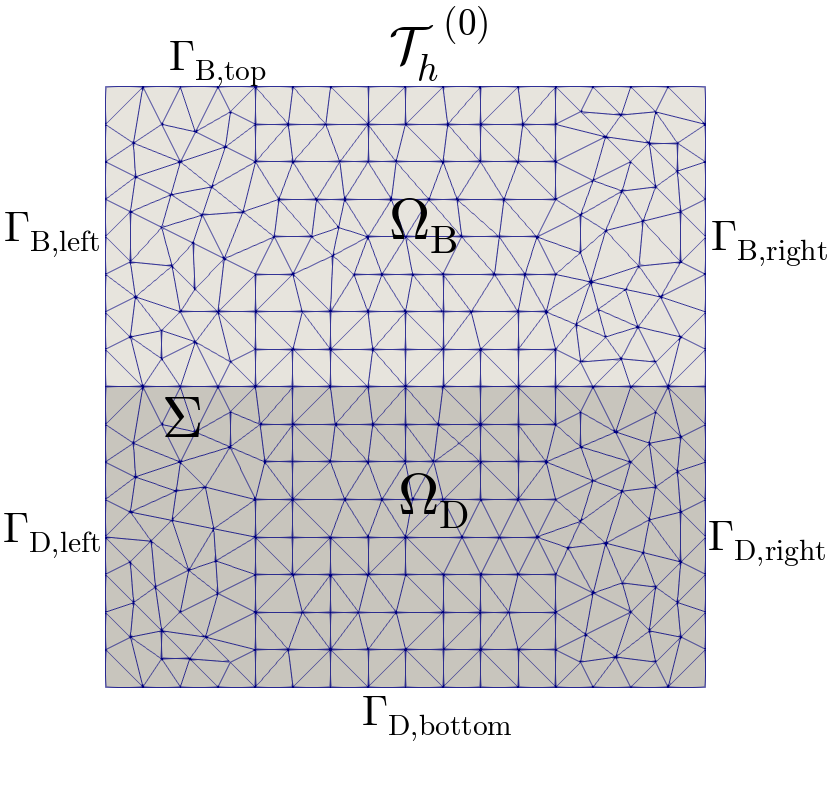}\quad
\includegraphics[width=4.9cm]{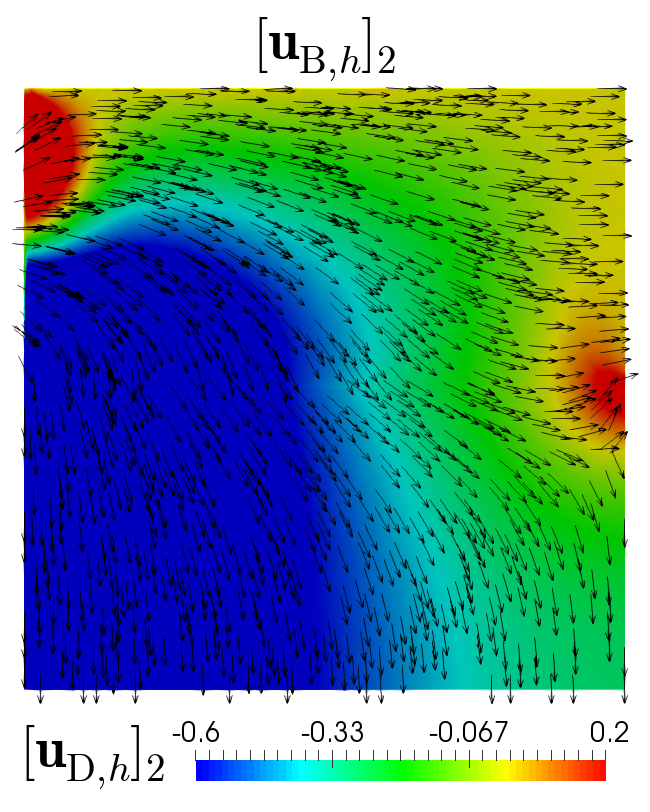}\quad	
\includegraphics[width=4.9cm]{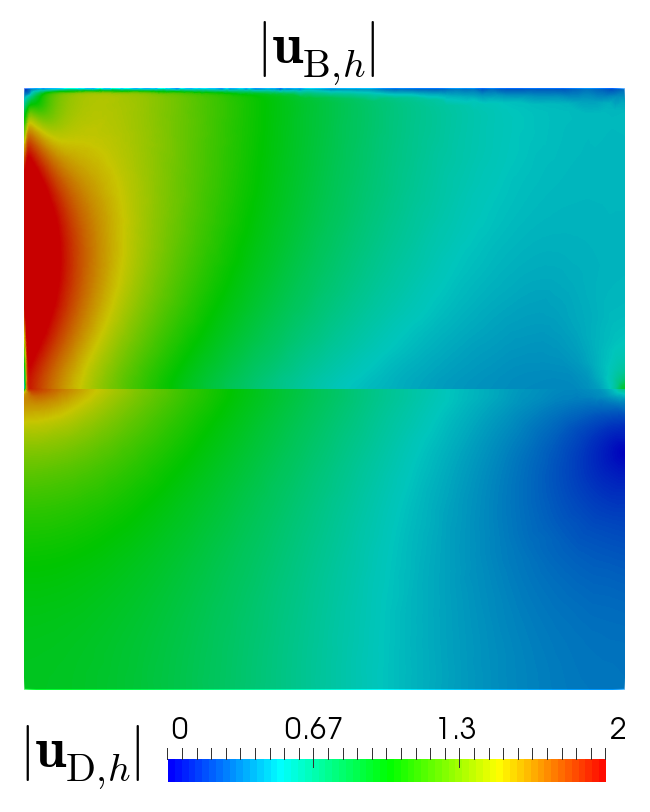}
		
\caption{[{\sc Example 3}] Initial mesh, second velocity component and magnitude of the velocity in the whole domain.}\label{figure:Ex3-domain-solutions}
\end{center}
\end{figure}

\begin{figure}[ht!]
\begin{center}
\includegraphics[width=5.5cm]{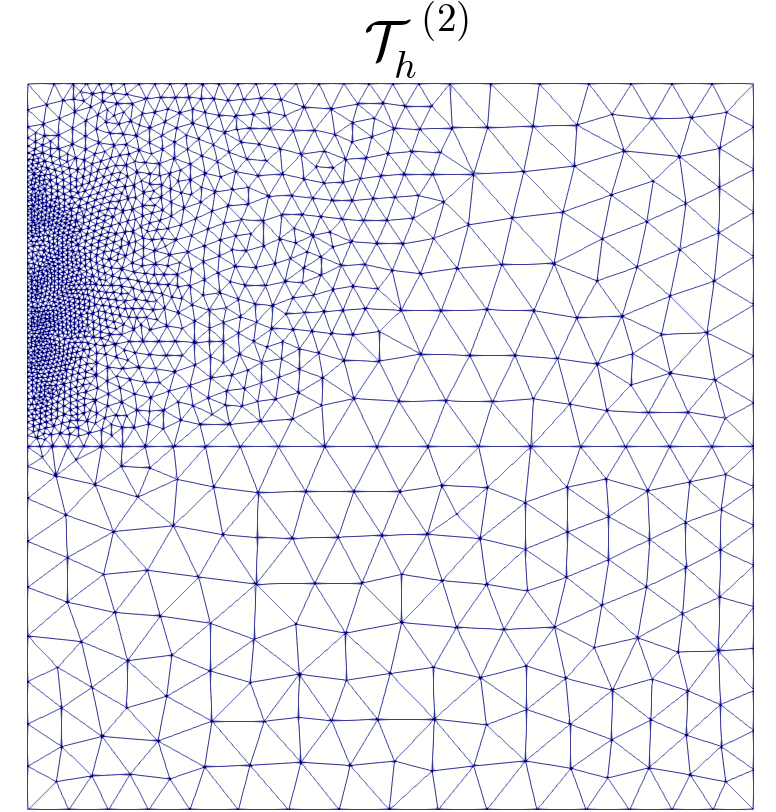}
\includegraphics[width=5.5cm]{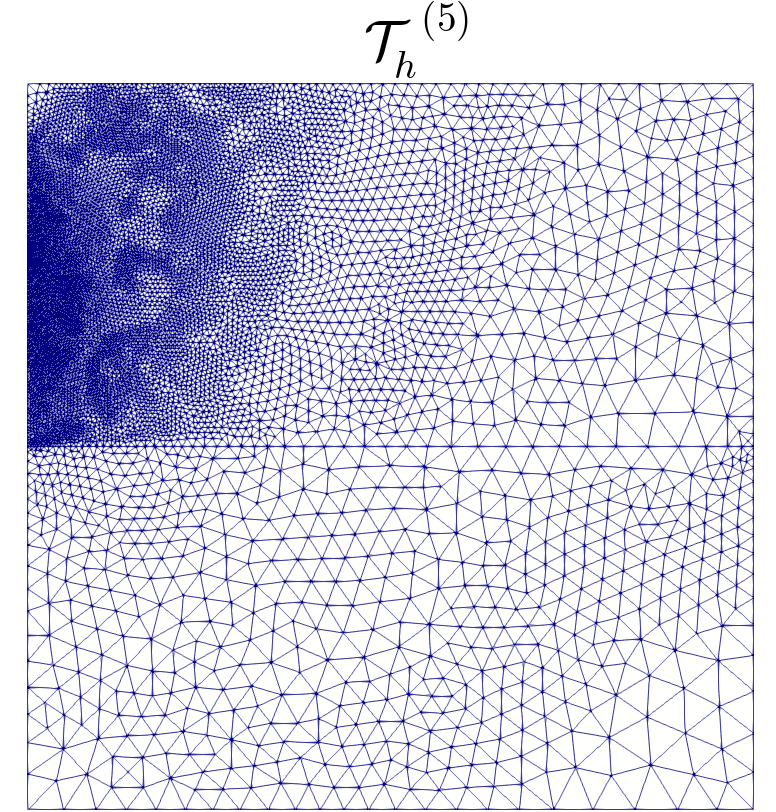}
\includegraphics[width=5.5cm]{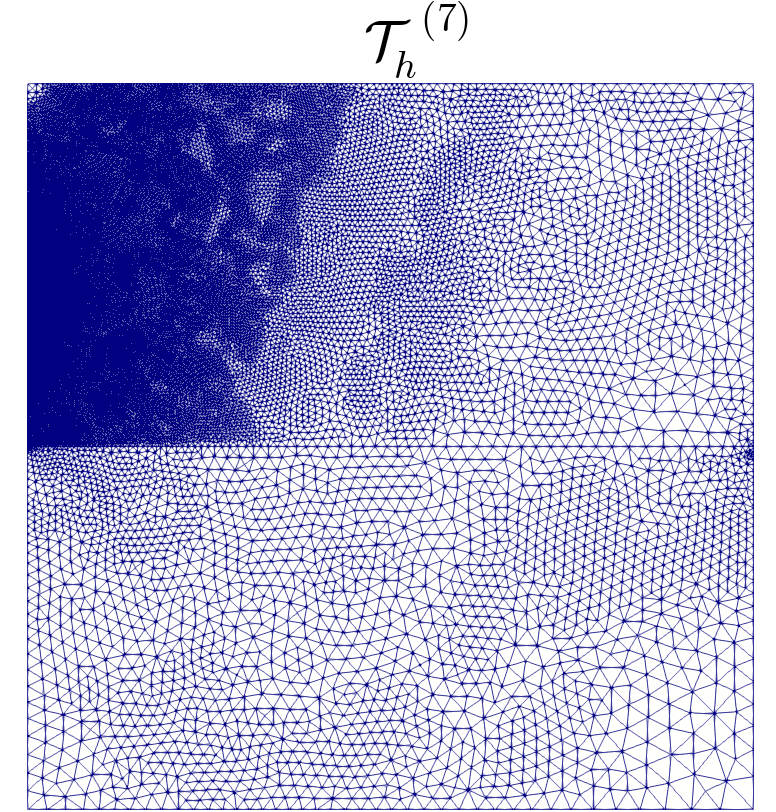}
		
\caption{[{\sc Example 3}] Three snapshots of adapted meshes according to the indicator $\Theta_{\tBFD}$.}\label{figure:Ex3-adapted-meshes}
\end{center}
\end{figure}

\appendix

\section{Preliminaries for reliability}\label{app:preliminaries-for-reliability}

We introduce a few useful notations for describing local information on elements and edges. 
First, given $T\in\cT^\rB_h\cup\cT^\rD_h$, we let $\cE(T)$ be the set of edges of $T$, 
and denote by $\cE_h$ the set of all edges of $\cT^\rB_h\cup\cT^\rD_h$, subdivided as follows:
\begin{equation*}
\cE_h = \cE_h(\Gamma_\rB)\cup \cE_h(\Gamma_\rD)\cup \cE_h(\Omega_\rB)\cup \cE_h(\Omega_\rD)\cup \cE_h(\Sigma) \,, 
\end{equation*}
where $\cE_h(\Gamma_\star):=\big\{ e\in\cE_h: e\subseteq \Gamma_\star \big\}$, 
$\cE_h(\Omega_\star):=\big\{ e\in\cE_h: e\subseteq \Omega_\star \big\}$, for $\star\in\{\rB,\rD\}$, 
and the edges of $\cE_h(\Sigma)$ are exactly those forming the previously defined 
partition $\Sigma_h$, that is $\cE_h(\Sigma):=\big\{ e\in\cE_h:\,\, e\subseteq \Sigma \big\}$.
Moreover, $h_e$ stands for the length of a given edge $e$.
Also for each edge $e\in\cE_h$ we fix a unit normal vector $\bn_e:=(n_1,n_2)^\rt$, and 
let $\bt_e:=(-n_2,n_1)^\rt$ be the corresponding fixed unit tangential vector along $e$.
Now, let $\bv\in\bL^2(\Omega_\star)$ such that $\bv|_T\in\bC(T)$ on each $T\in\cT^\star_h$.
Then, given $e\in\cE(T)\cap\cE_h(\Omega_\star)$, we denote by $\jump{\bv\cdot\bt_e}$ 
the tangential jump of $\bv$ across $e$, that is, 
$\jump{\bv\cdot\bt_e} := \big( (\bv|_T)|_e - (\bv|_{T'})|_e \big)\cdot\bt_e$, 
where $T$ and $T'$ are the triangles of $\cT^\star_h$ having $e$ as a common edge.
In addition, for $\btau\in\bbL^2(\Omega_\star)$ such that $\btau|_T\in\bbC(T)$, 
we let $\jump{\btau\,\bn_e}$ be the normal jump of $\btau$ across $e$, that is, 
\begin{equation}\label{eq:jump-tensor-n}
\jump{\btau\,\bn_e} := \big( (\btau|_T)|_e - (\btau|_{T'})|_e\big) \bn_e\,.
\end{equation}
From now on, when no confusion arises, we simply write $\bn$ and $\bt$ 
instead of $\bn_e$ and $\bt_e$, respectively.
Finally, given scalar and vector valued fields $\phi$ and $\bv = (v_1, v_2)^\rt$, 
respectively, we set
\begin{equation*}
\bcurl(\phi) := \left(
\frac{\partial \phi}{\partial x_2},\, 
-\frac{\partial \phi}{\partial x_1}
\right)^\rt \qan
\ds \rot(\bv):= \frac{\partial v_2}{\partial x_1} - \frac{\partial v_1}{\partial x_2} \,,
\end{equation*}
where the derivatives involved are taken in the distributional sense.

Let us now recall the main properties of the Raviart--Thomas interpolator 
(see for instance \cite{Gatica, Raviart-Thomas}) 
and the Cl\'ement operator onto the space of continuous piecewise 
linear functions \cite{clement1975,Verfurth}. 
We begin with the former, denoted $\Pi_h : \bH^1(\Omega_\rD) \to \bH_{h,\Gamma_{\rD}}(\Omega_\rD)$, 
which is characterized by the identity
\begin{equation}\label{eq:RT-interpolation}
\int_e \Pi_h(\bv)\cdot\bn = \int_e \bv\cdot\bn \quad \forall\,\text{ edge  $e$ of } \cT_h^\star. 
\end{equation} 
Moreover, as a straightforward consequence of \eqref{eq:RT-interpolation}, there holds:  
\begin{equation*}
\div(\Pi_h (\bv)) \,=\, \cP_h^\rD(\div(\bv)) \,,
\end{equation*}
where $\cP_h^\rD$ is the $\L^2(\Omega_\rD)$-orthogonal projector onto 
the piecewise constant functions on $\Omega_\rD$. 
The local approximation properties of $\Pi_h$ are established in the following lemma. 
For the corresponding proof we refer the reader to \cite{Gatica} (see also \cite{Brezzi-Fortin}).  
\begin{lem}\label{lem:Phih-properties}
There exist constants $c_1,c_2>0$, independent of $h$, such that 
for all $\bv \in \bH^1(\Omega_\rD)$ there holds
\begin{equation}\label{eq:property-Pi-1}
\|\bv - \Pi_h(\bv)\|_{0,T} \,\leq\, c_1\,h_T\,\|\bv\|_{1,T} \quad \forall\,T \in \cT_h^\rD
\end{equation}
and
\begin{equation}\label{eq:property-Pi-2}
\|\bv\cdot\bn - \Pi_h(\bv)\cdot\bn\|_{0,e} \,\leq\, c_2\,h_e^{1/2}\,\|\bv\|_{1,T_e} \quad \forall\,e \in \cE_h,
\end{equation}
where $T_e$ is a triangle of $\cT_h^\rD$ containing the edge $e$ on its boundary.
\end{lem}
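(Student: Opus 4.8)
The plan is to derive both estimates by the classical affine-scaling argument based on the Piola transform and the Bramble--Hilbert lemma; the result is standard (see \cite{Gatica,Brezzi-Fortin}), so I will only outline the steps. First I would fix $T\in\cT^\rD_h$, denote by $F_T(\hat\bx)=B_T\hat\bx+\bb_T$ the affine bijection mapping the reference triangle $\hat T$ onto $T$, and introduce the contravariant Piola transform $\mathcal{P}_T\bw:=(\det B_T)^{-1}B_T\,(\bw\circ F_T)$. The key observation is that the degrees of freedom defining $\bRT_0$ are the edge normal fluxes $\int_e(\cdot)\cdot\bn$, and these are preserved by $\mathcal{P}_T$; hence $\Pi_h$ commutes with the change of variables, namely $\mathcal{P}_T(\Pi_h\bv)=\widehat{\Pi}(\mathcal{P}_T\bv)$, where $\widehat{\Pi}:\bH^1(\hat T)\to\bRT_0(\hat T)$ is the lowest-order Raviart--Thomas interpolator on $\hat T$ characterized exactly as in \eqref{eq:RT-interpolation}. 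Note that $\widehat{\Pi}$ is well defined and bounded on $\bH^1(\hat T)$, since in two dimensions $\bH^1(\hat T)$ has $\bL^2$, hence $\bL^1$, traces on each edge of $\hat T$; moreover, because $[\rP_0(\hat T)]^2\subseteq\bRT_0(\hat T)$ and $\widehat{\Pi}$ is a projection, it reproduces constant vector fields: $\widehat{\Pi}\,\hat\bq=\hat\bq$ for all $\hat\bq\in[\rP_0(\hat T)]^2$.

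On the reference element the Bramble--Hilbert (Deny--Lions) lemma then yields
\[
\|\hat\bv-\widehat{\Pi}\,\hat\bv\|_{0,\hat T}\,\leq\,\big(1+\|\widehat{\Pi}\|\big)\inf_{\hat\bq\in[\rP_0(\hat T)]^2}\|\hat\bv-\hat\bq\|_{1,\hat T}\,\leq\,\widehat{C}\,|\hat\bv|_{1,\hat T}\,,
\]
and, combining this with the continuity of the trace operator from $\bH^1(\hat T)$ into $\bL^2(\hat e)$ for each edge $\hat e$ of $\hat T$, also $\|(\hat\bv-\widehat{\Pi}\,\hat\bv)\cdot\hat\bn\|_{0,\hat e}\leq\widehat{C}\,|\hat\bv|_{1,\hat T}$. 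Next I would transport these two reference inequalities back to $T$ using the standard Piola-transform scaling relations together with the shape-regularity bounds $\|B_T\|\lesssim h_T$, $\|B_T^{-1}\|\lesssim h_T^{-1}$ and $|\det B_T|\simeq|T|\simeq h_T^2$, and, for the second one, the fact that $\bw\cdot\bn\,ds$ is mapped to $\pm(\mathcal{P}_T\bw)\cdot\hat\bn\,d\hat s$ on the corresponding edges. Substituting and undoing the transform gives $\|\bv-\Pi_h(\bv)\|_{0,T}\leq c_1\,h_T\,|\bv|_{1,T}\leq c_1\,h_T\,\|\bv\|_{1,T}$ for all $T\in\cT^\rD_h$, and $\|\bv\cdot\bn-\Pi_h(\bv)\cdot\bn\|_{0,e}\leq c_2\,h_e^{1/2}\,|\bv|_{1,T_e}\leq c_2\,h_e^{1/2}\,\|\bv\|_{1,T_e}$ for each edge $e$ and a triangle $T_e\in\cT^\rD_h$ having $e$ on its boundary, with $c_1,c_2$ depending only on the shape-regularity parameter of $\{\cT^\rD_h\}_{h>0}$; this is \eqref{eq:property-Pi-1}--\eqref{eq:property-Pi-2}.

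The only real difficulty here is bookkeeping: keeping track of the correct powers of $h_T$ and of $\det B_T$ in the Piola-transform scaling relations --- in particular on the edges, where the $\bL^2$-trace scaling has to be combined with the flux-preserving property --- and verifying that the constants hidden in ``$\lesssim$'' depend only on the shape-regularity of the triangulation. No genuinely new idea is required; the argument is the textbook one for Raviart--Thomas interpolation, which is why one may simply refer to \cite{Gatica,Brezzi-Fortin}.
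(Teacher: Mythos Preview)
Your proposal is correct and follows exactly the textbook argument the paper invokes: the paper does not give its own proof of this lemma but simply refers to \cite{Gatica} and \cite{Brezzi-Fortin}, where the affine-scaling/Piola-transform/Bramble--Hilbert approach you outline is precisely what is carried out. There is nothing to add.
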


In turn, we make use of the Cl\'ement interpolation operator 
$\cI_h^\star:\H^{1}(\Omega_\star)\to \X_h(\Omega_\star)$, with $\star \in \{\rB,\rD\}$, where
\begin{equation}\label{eq:Xh-Clement-space}
\X_h(\Omega_\star) \,:=\, \Big\{v \in \cC(\overline{\Omega}_\star) :\quad v|_T \in \rP_1(T) \quad \forall\,T \in \cT_h^\star\Big\} \,.
\end{equation}
The local approximation properties of this operator are established in 
the following lemma (see \cite[Lemma 3.1]{Verfurth} for details):
\begin{lem}\label{lem:clement}
For each $\star \in \{\rS,\rD\}$ there exist constants $c_3,c_4>0$, independent of $h_\star$, 
such that for all $v \in \H^{1}(\Omega_\star)$ there hold
\begin{equation*}
\|v - \cI^\star_h(v)\|_{0,T} \,\leq\, c_3\, h_T\,\|v\|_{1,\Delta_\star(T)} \quad \forall\, T\in\cT^\star_h,
\end{equation*}
and
\begin{equation*}
\|v - \cI^\star_h(v)\|_{0,e} \,\leq\, c_4\, h^{1/2}_e\, \|v\|_{1,\Delta_\star(e)} \quad \forall\, e\in\cE_h,
\end{equation*}
where
\begin{equation*}
\Delta_\star(T) \,:=\, \cup \Big\{T'\in\cT^\star_h: \,\, T'\cap T\neq \emptyset \Big\} \qan
\Delta_\star(e) \,:=\, \cup \Big\{T'\in\cT^\star_h: \,\, T'\cap e \neq \emptyset \Big\} \,.
\end{equation*}
\end{lem}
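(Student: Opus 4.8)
The plan is to follow the classical construction and analysis of the Cl\'ement quasi-interpolant, in the averaging (rather than nodal) version since only first-order estimates are claimed. Fix one of the two subdomains and abbreviate $\cT_h:=\cT^\star_h$, $\Omega:=\Omega_\star$, $\cI_h:=\cI^\star_h$, $\Delta(\cdot):=\Delta_\star(\cdot)$. First I would set up notation: let $\mathcal{N}_h$ be the set of vertices of $\cT_h$; for $z\in\mathcal{N}_h$ let $\omega_z:=\bigcup\{T'\in\cT_h:z\in\overline{T'}\}$ be the nodal patch and $\varphi_z\in\X_h(\Omega)$ the hat function with $\varphi_z(z')=\delta_{zz'}$, so that $\sum_z\varphi_z\equiv 1$ and $\sum_z\nabla\varphi_z\equiv 0$. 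Denoting by $\overline v_{\omega_z}$ the integral average of $v$ over $\omega_z$, I would set $\cI_h v:=\sum_{z\in\mathcal{N}_h}\overline v_{\omega_z}\,\varphi_z$; this reproduces global constants, which is what will drive the first-order accuracy.

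The one genuine analytic input I would record is a patch Poincar\'e inequality with constant depending only on the shape-regularity of $\cT_h$: $\|v-\overline v_{\omega_z}\|_{0,\omega_z}\le C_P\,h_{\omega_z}\,|v|_{1,\omega_z}$ for all $v\in\H^1(\omega_z)$ and all $z\in\mathcal{N}_h$. I would obtain this by mapping each patch affinely onto one of the (up to scaling and rigid motion) boundedly many reference patch configurations allowed by shape-regularity, applying the standard Poincar\'e inequality on each such connected reference domain, and scaling back. Shape-regularity also yields $h_{\omega_z}\simeq h_T$ whenever $z\in\overline T$, a uniform bound on the number of elements in each patch, $0\le\varphi_z\le 1$, and $\|\nabla\varphi_z\|_{0,T}\lesssim 1$.

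For the volume bound, fix $T\in\cT_h$ with vertices $z_1,z_2,z_3$. From $\sum_{i=1}^3\varphi_{z_i}\equiv 1$ on $T$,
\[
v-\cI_h v=\sum_{i=1}^3\big(v-\overline v_{\omega_{z_i}}\big)\varphi_{z_i}\quad\text{on }T,
\]
so $\|v-\cI_h v\|_{0,T}\le\sum_i\|v-\overline v_{\omega_{z_i}}\|_{0,\omega_{z_i}}\le C_P\sum_i h_{\omega_{z_i}}|v|_{1,\omega_{z_i}}$, and since $\omega_{z_i}\subseteq\Delta(T)$ and $h_{\omega_{z_i}}\simeq h_T$ this gives $\|v-\cI_h v\|_{0,T}\le c_3\,h_T\,\|v\|_{1,\Delta(T)}$. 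For the companion gradient estimate, which I will need on the edges, I would instead use $\sum_i\nabla\varphi_{z_i}\equiv 0$ on $T$ to write, with $c:=\overline v_{\Delta(T)}$, $\nabla(v-\cI_h v)=\nabla v-\sum_i(\overline v_{\omega_{z_i}}-c)\nabla\varphi_{z_i}$, bound $|\overline v_{\omega_{z_i}}-c|\lesssim|\omega_{z_i}|^{-1/2}\|v-c\|_{0,\Delta(T)}\lesssim|v|_{1,\Delta(T)}$ using Poincar\'e on $\Delta(T)$, and use $\|\nabla\varphi_{z_i}\|_{0,T}\lesssim 1$, obtaining $|v-\cI_h v|_{1,T}\le C\,|v|_{1,\Delta(T)}$.

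For the edge bound I would take $e\in\cE_h$ lying on $\partial T$ and apply the scaled trace inequality $\|w\|_{0,e}^2\le C\big(h_e^{-1}\|w\|_{0,T}^2+h_e\,|w|_{1,T}^2\big)$ to $w:=v-\cI_h v$; feeding in the two estimates of the previous paragraph and noting $\Delta(T)\subseteq\Delta(e)$ yields $\|v-\cI_h v\|_{0,e}\le c_4\,h_e^{1/2}\,\|v\|_{1,\Delta(e)}$. A finite-overlap count then shows that the constants $c_3,c_4$ so produced are uniform in $h$, which, after restricting $v$ to a given patch, gives the stated estimates with $\Delta_\star(T)$ and $\Delta_\star(e)$. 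The step I expect to be the main obstacle is precisely the uniform patch Poincar\'e constant: one must argue with some care that shape-regularity confines the patches, up to scaling, to a compact family of domains, so that this constant does not degenerate as $h\to 0$; everything else is elementary triangle-inequality bookkeeping and standard affine scaling.
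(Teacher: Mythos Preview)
The paper does not prove this lemma at all: it is stated as a quotation of a classical result, with the proof deferred to \cite[Lemma 3.1]{Verfurth}. Your sketch reproduces, correctly in outline, the standard argument one finds in that reference (construction via nodal averaging, partition-of-unity decomposition, patch Poincar\'e, and scaled trace), so in that sense you are supplying what the paper deliberately omits.

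One small slip to fix: your claimed inclusion $\Delta(T)\subseteq\Delta(e)$ is backwards. With the paper's definitions, an edge $e\subset\partial T$ has only two vertices while $T$ has three, so $\Delta(e)\subseteq\Delta(T)$, and your route through the trace inequality applied to $w=v-\cI_h v$ on $T$ lands you with $\|v\|_{1,\Delta(T)}$ on the right, not $\|v\|_{1,\Delta(e)}$. The standard cure is to localize \emph{before} tracing: on $e$ only the two hat functions $\varphi_{z_1},\varphi_{z_2}$ attached to the endpoints of $e$ are nonzero, so $(v-\cI_h v)|_e=\sum_{i=1}^2(v-\overline v_{\omega_{z_i}})\varphi_{z_i}|_e$; apply the scaled trace inequality to each $v-\overline v_{\omega_{z_i}}$ on a triangle $T\subset\omega_{z_i}$ and then the patch Poincar\'e on $\omega_{z_i}$. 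Since $\omega_{z_1},\omega_{z_2}\subseteq\Delta(e)$, this yields the stated bound with the correct patch. Everything else in your argument is fine.
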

In particular, a vector version of $\cI^\rB_h$, say $\bcI^\rB_h:\bH^1(\Omega_\rB)\to \bX_h(\Omega_\rB)$, 
which is defined component-wise by $\cI^\rB_h$, will be needed as well.

Finally, we present a stable Helmholtz decomposition for $\bH_{\Gamma_\rD}(\div;\Omega_\rD)$. 
Indeed, as noted in \cite[Lemma 3.9]{agr2016}, this decomposition requires the boundary $\Gamma_\rD$ to lie within a ``{\it convex part}'' of $\Omega_\rD$. 
Specifically, $\Gamma_\rD$ must be contained in the boundary of a convex domain that fully encompasses $\Omega_\rD$. We begin by introducing the following subspace of $\H^1(\Omega_\rD)$,
\begin{equation}\label{eq:H1-GammaD-space}
\H^1_{\Gamma_\rD} (\Omega_\rD) \,:=\, \Big\{ \eta_\rD\in \H^1(\Omega_\rD) :\quad \eta_\rD = 0 \qon \Gamma_\rD \Big\},
\end{equation}
and proceed to establish a suitable Helmholtz decomposition for the space $\bH_{\Gamma_\rD}(\div;\Omega_\rD)$.
\begin{lem}\label{lem:Helmholtz-decomposition}
Assume that $\Omega_\rD$ is a simply connected
domain and that $\Gamma_\rD$ is contained in the boundary of a convex part of $\Omega_\rD$, that is there exists a convex domain $\Xi$ such that $\ov{\Omega}_\rD\subseteq \Xi$ and $\Gamma_\rD\subseteq\partial\Xi$. 
Then, for each $\bv_\rD\in\bH_{\Gamma_\rD}(\div;\Omega_\rD)$, 
there exist $\bw_\rD\in\bH^1(\Omega_\rD)$ and $\beta_\rD\in\H^1_{\Gamma_\rD}(\Omega_\rD)$ such that
\begin{equation*}
\bv_\rD \,=\, \bw_\rD + \bcurl(\beta_\rD) \qin \Omega_\rD
\qan
\|\bw_\rD\|_{1,\Omega_\rD} + \|\beta_\rD\|_{1,\Omega_\rD} 
\,\leq\, \Chel\,\|\bv_\rD\|_{\div;\Omega_\rD}\,,
\end{equation*}
where $\Chel$ is a positive constant independent of all the foregoing variables.
\end{lem}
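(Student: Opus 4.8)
The plan is to transfer the problem to the convex auxiliary domain $\Xi$, where the $\H^2$-regularity of the Neumann Laplacian is available, and then restrict back to $\Omega_\rD$; the argument follows the lines of \cite[Lemma 3.9]{agr2016}.

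First I would extend $g:=\div(\bv_\rD)\in\L^2(\Omega_\rD)$ to a function $\tilde g\in\L^2(\Xi)$ by setting it equal to the constant $-\,|\Xi\setminus\overline{\Omega}_\rD|^{-1}\int_{\Omega_\rD}\div(\bv_\rD)$ on $\Xi\setminus\overline{\Omega}_\rD$, so that $\tilde g$ has zero mean on $\Xi$ and $\|\tilde g\|_{0,\Xi}\le C\,\|\div(\bv_\rD)\|_{0,\Omega_\rD}$ (if $\Omega_\rD$ is itself convex, i.e.\ $\Xi=\Omega_\rD$, no extension is needed and one works directly on $\Omega_\rD$ with a non-homogeneous Neumann datum $\bv_\rD\cdot\bn$). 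Next, since $\Xi$ is convex and $\int_\Xi\tilde g=0$, I would solve $\Delta\varphi=\tilde g$ in $\Xi$ with homogeneous Neumann condition on $\partial\Xi$ and zero mean; by the elliptic regularity available on convex domains (Grisvard) this yields $\varphi\in\H^2(\Xi)$ with $\|\varphi\|_{2,\Xi}\le C\,\|\tilde g\|_{0,\Xi}$. Then I set $\bw_\rD:=\nabla\varphi|_{\Omega_\rD}\in\bH^1(\Omega_\rD)$, observing that $\div(\bv_\rD-\bw_\rD)=\div(\bv_\rD)-\tilde g|_{\Omega_\rD}=0$ in $\Omega_\rD$ and, crucially, that $(\bv_\rD-\bw_\rD)\cdot\bn=0$ on $\Gamma_\rD$, since $\bv_\rD\cdot\bn=0$ there and $\nabla\varphi\cdot\bn=0$ on $\partial\Xi\supseteq\Gamma_\rD$.

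Since $\Omega_\rD$ is simply connected and $\bv_\rD-\bw_\rD$ is divergence-free, there exists $\beta_\rD\in\H^1(\Omega_\rD)$, unique up to an additive constant, with $\bv_\rD-\bw_\rD=\bcurl(\beta_\rD)$ in $\Omega_\rD$. On $\Gamma_\rD$ one has $\frac{d\,\beta_\rD}{d\,\bt}=\bcurl(\beta_\rD)\cdot\bn=(\bv_\rD-\bw_\rD)\cdot\bn=0$, so $\beta_\rD$ is constant along $\Gamma_\rD$, which is connected, being the complement in the Jordan curve $\partial\Omega_\rD$ of the arc $\overline{\Sigma}$; subtracting that constant (which does not affect $\bcurl(\beta_\rD)$) we may take $\beta_\rD=0$ on $\Gamma_\rD$, hence $\beta_\rD\in\H^1_{\Gamma_\rD}(\Omega_\rD)$, and the decomposition $\bv_\rD=\bw_\rD+\bcurl(\beta_\rD)$ holds. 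For the stability bound I would chain the estimates above: $\|\bw_\rD\|_{1,\Omega_\rD}\le\|\varphi\|_{2,\Xi}\le C\,\|\tilde g\|_{0,\Xi}\le C\,\|\div(\bv_\rD)\|_{0,\Omega_\rD}$, and, using that $\beta_\rD$ vanishes on $\Gamma_\rD$ together with the Poincar\'e inequality, $\|\beta_\rD\|_{1,\Omega_\rD}\le C\,|\beta_\rD|_{1,\Omega_\rD}=C\,\|\bcurl(\beta_\rD)\|_{0,\Omega_\rD}\le C\,\big(\|\bv_\rD\|_{0,\Omega_\rD}+\|\bw_\rD\|_{0,\Omega_\rD}\big)\le C\,\|\bv_\rD\|_{\div;\Omega_\rD}$; collecting the constants gives $\Chel$.

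The main obstacle is the elliptic-regularity input $\varphi\in\H^2(\Xi)$: it is the only nontrivial ingredient, and it is exactly what forces the geometric hypothesis that $\Gamma_\rD$ lie on the boundary of a convex set $\Xi\supseteq\overline{\Omega}_\rD$ (without it one would only obtain $\bw_\rD\in\bH(\div;\Omega_\rD)$, not $\bH^1(\Omega_\rD)$). The remaining care is purely in the normal-trace bookkeeping securing $\beta_\rD|_{\Gamma_\rD}=0$; if one wanted to dispense with the connectedness of $\Gamma_\rD$, one would instead first extend $\bv_\rD$ itself to an $\bH(\div;\Xi)$-field with vanishing normal trace on all of $\partial\Xi$ (solving an auxiliary Neumann problem in $\Xi\setminus\overline{\Omega}_\rD$ and gluing gradients across the interface) and carry out the whole construction on $\Xi$, which makes the corresponding stream function constant on the connected curve $\partial\Xi$ and thus normalizable to zero there.
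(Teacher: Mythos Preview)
The paper does not supply its own proof of this lemma; it is stated in Appendix~\ref{app:preliminaries-for-reliability} and implicitly deferred to \cite[Lemma 3.9]{agr2016}. Your proposal reproduces precisely that standard argument---solving a homogeneous Neumann problem on the convex superset $\Xi$ to produce the $\bH^1$ gradient part with vanishing normal trace on $\Gamma_\rD\subseteq\partial\Xi$, and then extracting the stream function on the simply connected $\Omega_\rD$---and is correct.
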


\section{Preliminaries for efficiency}\label{app:preliminaries-for-efficiency}

For the efficiency analysis of $\Theta_{\tBFD}$ (cf. \eqref{eq:global-estimator}), we proceed as in  
\cite{cgos2021}, \cite{bg2010} and \cite{cgo2023}, and apply the localization technique based on bubble 
functions, along with inverse and discrete trace inequalities. 
For the former, given $T\in \cT_h:=\cT^\rB_h\cup \cT^\rD_h$ and $e\in \cE(T)$, 
we let $\phi_T$ and $\phi_e$ be the usual element-bubble and edge-bubble functions (cf. \cite[eqs. (1.5) and (1.6)]{Verfurth}), which respectively satisfy
\begin{equation}\label{eq:buble-property-null-boundary-T}
\phi_T \in \rP_3(T), \quad \text{supp}(\phi_T) \subseteq T, \quad \phi_T = 0 \qon \partial T 
\qan 0 \leq \phi_T \leq 1 \qin T\quad\mbox{and}
\end{equation}
\begin{equation}\label{eq:buble-property-null-boundary-e}
\phi_e|_T \in \rP_2(T), \quad \text{supp}(\phi_e) \subseteq \omega_e, \quad \phi_e = 0 \qon \partial T\setminus e 
\qan 0 \leq \phi_e \leq 1 \qin \omega_e\,,
\end{equation}
with $\omega_e:=\cup\big\{ T'\in \cT_h:\,e\in \cE(T') \big\}$.
We also recall from \cite{Verfurth} that, given $k\in\bbN\cup\{0\}$, there exists an extension operator $L:C(e)\to C(\omega_e)$ that satisfies $L(p)\in P_k(T)$ and $L(p)|_e=p$ $\forall p\in P_k(e)$. A corresponding vector version of $L$, that is the component-wise application of $L$, is denoted by $\bL$.
The specific properties of $\phi_T, \phi_e$, and $L$ are collected in the following lemma, for whose proof we refer to \cite[ Lemma 3.3]{Verfurth}.
\begin{lem}\label{lem:properties-bubble}
Given $k\in\bbN\cup\{0\}$, there exist positive constants $c_1,~c_2$, $c_3$ and $c_4$, depending only on $k$ and the shape regularity of the triangulations (minimum angle condition), such that for each triangle $T$ and $e\in\cE(T)$, there hold
\begin{eqnarray}
\|\phi_T q\|^2_{0,T}\leq\|q\|^2_{0,T}\leq c_1\|\phi^{1/2}_T q\|^2_{0,T}\qquad \forall q\in P_k(T),\label{eq:buble-property-1} \\[2ex]
\|\phi_e L(p)\|^2_{0,e}\leq\|p\|^2_{0,e}\leq c_2\|\phi^{1/2}_e p\|^2_{0,e}\qquad \forall p\in P_k(e),\label{eq:buble-property-2}
\end{eqnarray}
and
\begin{equation}\label{eq:buble-property-3}
c_3h_e^{1/2}\|p\|_{0,e}\le  \|\phi^{1/2}_e L(p)\|_{0,T}\leq c_4h_e^{1/2}\|p\|_{0,e}\qquad \forall p\in P_k(e).
\end{equation}
\end{lem}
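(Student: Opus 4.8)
The plan is to reduce all three pairs of inequalities to two standard ingredients: the affine equivalence of finite elements and the equivalence of norms on the finite-dimensional spaces $\rP_k(\wh T)$ and $\rP_k(\wh e)$ associated with a fixed reference triangle $\wh T$ carrying a reference edge $\wh e$. First I would fix the reference configuration together with the reference bubbles $\phi_{\wh T}$ and $\phi_{\wh e}$, which by construction are strictly positive on the interior of $\wh T$ and on the relative interior of $\wh e$, respectively. For an arbitrary $T\in\cT_h$ and $e\in\cE(T)$, I would then introduce the affine bijection $F_T:\wh T\to T$, so that $\phi_T=\phi_{\wh T}\circ F_T^{-1}$, $\phi_e=\phi_{\wh e}\circ F_T^{-1}$, and $|\det(DF_T)|=2\,|T|$. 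Shape regularity (the minimum-angle condition) guarantees $h_e\sim h_T$ and controls the singular values of $DF_T$ up to the scale $h_T$; these are the scaling factors that will transport every reference estimate to $T$ and $e$.

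Next I would dispose of the elementary bounds. The leftmost inequalities in \eqref{eq:buble-property-1} and \eqref{eq:buble-property-2} follow at once from the pointwise estimates $0\le\phi_T\le 1$ and $0\le\phi_e\le 1$ (cf.\ \eqref{eq:buble-property-null-boundary-T}, \eqref{eq:buble-property-null-boundary-e}) together with $L(p)|_e=p$. For the rightmost inequality in \eqref{eq:buble-property-1}, I would note that $q\mapsto (\int_{\wh T}\phi_{\wh T}\,q^2)^{1/2}$ is a norm on $\rP_k(\wh T)$, because $\phi_{\wh T}>0$ in the interior forces $q\equiv 0$ once the quantity vanishes; since $\rP_k(\wh T)$ is finite dimensional, this norm is equivalent to $\|\cdot\|_{0,\wh T}$ with a constant depending only on $k$. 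Pulling the equivalence back through $F_T$, the common factor $|\det(DF_T)|$ cancels in the resulting ratio, which is exactly why $c_1$ is independent of $h_T$. The rightmost inequality in \eqref{eq:buble-property-2} is handled identically, using that $\phi_{\wh e}>0$ on the relative interior of $\wh e$ renders $p\mapsto(\int_{\wh e}\phi_{\wh e}\,p^2)^{1/2}$ a norm on $\rP_k(\wh e)$.

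The step I expect to require the most care is \eqref{eq:buble-property-3}, since this is where the nontrivial power $h_e^{1/2}$ is produced. The reference version of the claim is that $p\mapsto\|\phi_{\wh e}^{1/2}\,L(p)\|_{0,\wh T}$ is once more a norm on $\rP_k(\wh e)$: here I would use that $L$ is a fixed linear injection and that $\phi_{\wh e}>0$ on a subset of $\wh T$ of positive measure, so that the vanishing of $\phi_{\wh e}^{1/2}L(p)$ forces $L(p)=0$ and hence $p=0$. Equivalence of this norm with $\|p\|_{0,\wh e}$ then follows by finite dimensionality. Transporting to $T$ and $e$, the left-hand quantity scales like $|\det(DF_T)|^{1/2}\sim h_T$ whereas $\|p\|_{0,e}$ scales like $|e|^{1/2}\sim h_e^{1/2}$, so the ratio yields $h_T/h_e^{1/2}\sim h_e^{1/2}$ after invoking $h_T\sim h_e$. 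The genuine obstacle in all three cases is the \emph{uniformity} of the constants across the entire family of triangulations; this is secured precisely by the shape-regularity assumption, which keeps $h_T/h_e$ and the conditioning of $DF_T$ bounded and thereby transfers the fixed reference constants without degeneration. Collecting the three reference equivalences with their scaling factors gives \eqref{eq:buble-property-1}--\eqref{eq:buble-property-3}.
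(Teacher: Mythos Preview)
Your argument is correct and is essentially the classical scaling argument; the paper, however, does not supply its own proof of this lemma but simply refers the reader to \cite[Lemma~3.3]{Verfurth}. What you have outlined is precisely the strategy underlying Verf\"urth's proof: equivalence of norms on the finite-dimensional reference spaces $\rP_k(\wh T)$ and $\rP_k(\wh e)$, combined with affine pull-back and the shape-regularity control of $h_T/h_e$ and $DF_T$, so there is no genuine methodological difference to discuss.
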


In turn, the aforementioned inverse inequality is stated as follows (cf. \cite[Lemma 1.138]{Girault-Raviart}).
\begin{lem}\label{lem:inverse-inequality}
Let $k$, $\ell$, and $m$ be non-negative integers such that $m\leq \ell$, and let $T\in \cT_h$. Then, there exists $c > 0$, independent of $h$ and $T$, but depending on $k$, $\ell$, $m$, and the shape regularity of the triangulations, such that
\begin{equation}\label{eq:inverse-inequality}
\|v\|_{\ell,T} \,\leq\, c\,h^{m-\ell}_T \|v\|_{m,T}\quad \forall \, v\in \rP_{k}(T) \,.
\end{equation}
\end{lem}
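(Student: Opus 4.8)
The plan is to prove \eqref{eq:inverse-inequality} by the classical scaling argument on a reference element, exactly as in \cite[Lemma 1.138]{Girault-Raviart}; since the result is standard I will only outline it. First I would dispose of the trivial cases: if $m>k$ then $v\in\rP_k(T)\subseteq\rP_m(T)$ and both sides of \eqref{eq:inverse-inequality} reduce to multiples of $\|v\|_{k,T}$, while if $\ell>k$ all seminorms $|v|_{j,T}$ with $j>k$ vanish, so $\|v\|_{\ell,T}=\|v\|_{k,T}$ and one may replace $\ell$ by $k$; hence there is no loss in assuming $m\le\ell\le k$. Next I would fix a reference triangle $\widehat T$, write the affine bijection onto $T$ as $F_T(\widehat\bx)=B_T\widehat\bx+\bb_T$, and set $\widehat v:=v\circ F_T\in\rP_k(\widehat T)$.

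The argument then rests on two ingredients. The first is that $\rP_k(\widehat T)$ is finite dimensional, so all norms on it are equivalent; passing to the quotient $\rP_k(\widehat T)/\rP_{m-1}(\widehat T)$, on which $|\cdot|_{m,\widehat T}$ is a genuine norm and $|\cdot|_{j,\widehat T}$ (for $m\le j\le\ell$) a well-defined seminorm, compactness of the unit sphere yields a constant $\widehat c=\widehat c(k,\ell,m,\widehat T)$ with $|\widehat w|_{j,\widehat T}\le\widehat c\,|\widehat w|_{m,\widehat T}$ for every $\widehat w\in\rP_k(\widehat T)$. The second is the set of standard affine change-of-variables estimates: for each integer $j\ge0$,
\[
|v|_{j,T}\le c\,\|B_T^{-1}\|^{j}\,|\det B_T|^{1/2}\,|\widehat v|_{j,\widehat T},\qquad
|\widehat v|_{j,\widehat T}\le c\,\|B_T\|^{j}\,|\det B_T|^{-1/2}\,|v|_{j,T},
\]
together with the consequences of shape-regularity $\|B_T\|\le c\,h_T$ and $\|B_T^{-1}\|\le c\,h_T^{-1}$, the constants $c$ depending only on $\widehat T$ and the minimum-angle constant $\sigma$ of the family $\{\cT_h\}$. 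Chaining these for $m\le j\le\ell$ gives $|v|_{j,T}\le c\,\|B_T^{-1}\|^{j}\|B_T\|^{m}\,|v|_{m,T}\le c\,h_T^{m-j}\,|v|_{m,T}$, i.e.\ the seminorm form of the inverse inequality.

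Finally I would recover the stated full-norm estimate by squaring and summing over $0\le j\le\ell$: for $j\ge m$ the seminorm bound gives $|v|_{j,T}\le c\,h_T^{m-j}\|v\|_{m,T}$, and for $j<m$ one simply has $|v|_{j,T}\le\|v\|_{m,T}$; in each term the leftover nonnegative powers of $h_T$ are absorbed using $h_T\le\mathrm{diam}(\Omega)$, producing a single constant $c=c(k,\ell,m,\sigma,\mathrm{diam}(\Omega))$ independent of $T$ and $h$. The only genuinely delicate part is this last bookkeeping of the powers of $h_T$ — guaranteeing that, after the scaling estimates, the constant is truly uniform over all $T\in\cT_h$ — and this is precisely where shape-regularity (to bound $\|B_T\|$ and $\|B_T^{-1}\|$ uniformly) and the boundedness of the computational domain enter; no essential difficulty is involved.
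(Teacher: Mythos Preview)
Your proposal is correct and follows the standard scaling argument via a reference element, norm equivalence on the finite-dimensional space $\rP_k(\widehat T)$, and the affine change-of-variables estimates combined with shape regularity; this is precisely the approach in the reference \cite[Lemma 1.138]{Girault-Raviart} that the paper cites, and the paper itself gives no proof beyond that citation.
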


Finally, we also recall from \cite[Theorem 3.10]{Agmon}
the discrete trace inequality.
\begin{lem}
There exits $c>0$, depending only on the shape regularity of the triangulations, such that for each $T\in \cT_h$ and $e\in \cE(T)$, there holds
\begin{equation}\label{eq:discrete-trace-inequality}
\| v\|_{0,e}^{2} \,\leq\, c \,\Big\{ h_{T}^{-1}\, \| v\|_{0,T}^{2} 
+ h_{T}\,|v|_{1,T}^{2} \Big\} \quad \forall \, v\in \H^{1}(T)\,.
\end{equation}
\end{lem}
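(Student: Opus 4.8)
The plan is to establish \eqref{eq:discrete-trace-inequality} by transplanting a fixed trace estimate from a reference triangle to an arbitrary element $T$ via an affine change of variables, and then bookkeeping how each norm scales. First I would fix a reference triangle $\wh{T}$ together with one of its edges $\wh{e}$, and invoke the classical continuous trace theorem on $\wh{T}$: there is a constant $\wh{c}>0$, depending only on $\wh{T}$, with
\begin{equation*}
\|\wh{v}\|_{0,\wh{e}}^2 \,\leq\, \wh{c}\,\Big(\|\wh{v}\|_{0,\wh{T}}^2 + |\wh{v}|_{1,\wh{T}}^2\Big) \qquad \forall\,\wh{v}\in\H^1(\wh{T})\,.
\end{equation*}
Since every edge of $\wh{T}$ is equivalent to $\wh{e}$ after a fixed rigid motion, it suffices to treat this single reference configuration.

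Next, given $T\in\cT_h$ and $e\in\cE(T)$, I would write the affine bijection $F_T:\wh{T}\to T$ as $F_T(\wh\bx)=\bB_T\wh\bx+\bb_T$, arranged so that $F_T(\wh{e})=e$, and set $\wh{v}:=v\circ F_T\in\H^1(\wh{T})$ for $v\in\H^1(T)$. Under the shape-regularity (minimum angle) assumption one has the standard estimates $\|\bB_T\|\leq c\,h_T$, $\|\bB_T^{-1}\|\leq c\,h_T^{-1}$ and $c^{-1}h_T^2\leq|\det\bB_T|\leq c\,h_T^2$, with $c$ depending only on the regularity parameter. I would then transform each term separately: the edge integral obeys $\|v\|_{0,e}^2=\tfrac{h_e}{|\wh{e}|}\,\|\wh{v}\|_{0,\wh{e}}^2$; the element norm obeys $\|\wh{v}\|_{0,\wh{T}}^2=|\det\bB_T|^{-1}\|v\|_{0,T}^2\leq c\,h_T^{-2}\|v\|_{0,T}^2$; and, using the chain rule $\nabla\wh{v}=\bB_T^\rt\,(\nabla v)\circ F_T$ together with the above bounds, the seminorm obeys $|\wh{v}|_{1,\wh{T}}^2\leq\|\bB_T\|^2\,|\det\bB_T|^{-1}\,|v|_{1,T}^2\leq c\,|v|_{1,T}^2$.

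Finally, I would insert these three relations into the reference trace inequality and use $h_e\leq h_T$ to conclude
\begin{equation*}
\|v\|_{0,e}^2 \,\leq\, c\,h_e\,\Big(h_T^{-2}\|v\|_{0,T}^2 + |v|_{1,T}^2\Big) \,\leq\, c\,\Big(h_T^{-1}\|v\|_{0,T}^2 + h_T\,|v|_{1,T}^2\Big)\,,
\end{equation*}
which is exactly \eqref{eq:discrete-trace-inequality}. The only genuinely delicate point is guaranteeing that the resulting constant is independent of $T$ and of $h$; this is precisely where shape regularity enters, since it is what converts the purely geometric matrix-norm bounds for $\bB_T$ and $\bB_T^{-1}$ into the correct powers of $h_T$ with a single uniform constant. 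Everything else is routine change-of-variables bookkeeping, so I would omit those computations in the final write-up and simply refer to \cite[Theorem 3.10]{Agmon}.
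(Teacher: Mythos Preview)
Your scaling argument is correct and is the standard way this local trace inequality is established: trace on a fixed reference element, then transport via the affine map and use shape regularity to track the powers of $h_T$. The paper, however, does not give any proof of this lemma; it simply \emph{recalls} the result from \cite[Theorem 3.10]{Agmon} and moves on, so there is nothing to compare beyond noting that your write-up supplies the details the paper chose to omit by citation.
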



\end{document}